\DeclareMathAlphabet{\mathcal}{OMS}{cmsy}{m}{n}
\DeclareSymbolFontAlphabet{\mathbb}{AMSb}
\DeclareSymbolFontAlphabet{\mathbbl}{bbold}
\newcommand{\doublecoset}[3]{% 
	{\textstyle #1}
	\mkern-4mu\scalebox{1.5}{$\diagdown$}\mkern-5mu^{\textstyle #2}%
	\mkern-4mu\scalebox{1.5}{$\diagup$}\mkern-5mu{\textstyle #3} }
\newcommand{\prism}{{\mathlarger{\mathbbl{\Delta}}}}
\newcommand{\cris}{{\mathrm{cris}}}
\newcommand{\crispris}{{\prism_{\cris}}}
\newcommand{\BK}{{\mathrm{BK}}}
\newcommand{\BKprism}{{\prism_{\BK}}}
\newcommand{\BKcrispris}{{\prism_{\cris}^{\BK}}}
\newcommand{\Fil}{{\mathrm{Fil}}}
\newcommand{\K}{{\mathsf{K}}}
\newcommand{\perf}{{\mathrm{perf}}}
\newcommand{\Space}{{\mathsf{space}}}
\newcommand{\Spaces}{{\mathsf{spaces}}}
\newcommand{\Spf}{\ensuremath{\operatorname{Spf}\,}}
\newcommand{\sA}{{\mathcal A}}
\newcommand{\sE}{{\mathcal E}}
\newcommand{\sG}{{\mathcal G}}
\newcommand{\sS}{{\mathcal S}}
\newcommand{\Zp}{{\mathbb{Z}_p}}
\theoremstyle{remark}
\newtheorem{theorem}{\rm{\textbf{Theorem}}}[section]
\newtheorem{corollary}[theorem]{\rm{\textbf{\textbf{Corollary}}}}
\newtheorem{lemma}[theorem]{\rm{\textbf{Lemma}}}
\newtheorem{proposition}[theorem]{\rm{\textbf{\textbf{Proposition}}}}
\newtheorem{definition}[theorem]{\rm{\textbf{Definition}}}
\newtheorem{remark}[theorem]{\rm{\textbf{Remark}}}
\author[Q.~Yan]{Qijun Yan}
\title[Integral Frobenius period maps for Shimura variet{i}es]{On certain integral Frobenius period maps\\ for Shimura varieties and their reductions}
\address{Beijing Institute of Mathematical Sciences and Applications (BIMSA), Beijing, 101\,~408,  China}
\email{yanqmath\symbol{64}bimsa.cn}
\subjclass[2020]{14G35, 14G45, 11G18}
\date{}
\begin{document}
	\begin{abstract}
		We formulate an integral Frobenius period map for the framed crystalline prismatization of the $p$-integral model $\mathcal{S}$ of a Shimura variety with good reduction. By analyzing reductions of this map, we derive a period map from the mod $p$ fiber $S$ of $\mathcal{S}$ to the moduli stack of 1-1 truncated local $G$-shtukas in the prismatic topology, which refines the zip period map of $S$ within this topology. Furthermore, we show that the pair $(\mathcal{S}, S)$ is associated with a double $G$-zip. Additionally, we introduce a framework of base reduction diagrams.
	\end{abstract}
	\maketitle
	\section{Introduction}
	Throughout, we fix a rational prime \( p \geq 3 \) and let \( \kappa \) denote a finite extension of the prime field \( \mathbb{F}_p = \mathbb{Z}/p\mathbb{Z} \). We denote its ring of Witt vectors by \( W = W(\kappa) \).
	
	\subsection{Background on Frobenius period maps}
	
	Consider the Kisin-Vasiu integral model \(\mathcal{S} = \mathcal{S}_{\mathsf{K}}\) over \( W \) of a Hodge type Shimura variety with level \( \mathsf{K} \), which is hyperspecial at \( p \) (\cite{KisinIntegralModels}). It is a smooth quasi-projective scheme over \( W \). Accompanying \( \mathcal{S} \) is a reductive group \(\mathcal{G}\) defined over \(\mathbb{Z}_p\) and a minuscule cocharacter \( \mu: \mathbb{G}_{m, \kappa} \to G \), where \( G := \mathcal{G} \otimes_W \kappa \) (we denote the base change \( \mathcal{G}_W \) by \(\mathcal{G}\) itself). Let \( S = S_{\mathsf{K}} := \mathcal{S} \otimes_{W} \kappa \) be the special fiber of \(\mathcal{S}\). Associated to the reductive pair \((G, \mu)\) is the stack \( G\text{-}\textsf{Zip}^{\mu} \) of \( G \)-zips of type \(\mu\)~\cite{PinkWedhornZiegler2}, which morally parametrizes \(1\)-truncated \( p \)-divisible groups of type \( \mu \). This stack serves as the period domain for the zip period map of \( S \),
	\begin{equation}\label{Eq:ZipMap}
		\zeta: S \to G\text{-}\textsf{Zip}^{\mu}.
	\end{equation}
	In this generality, it was f{i}rst constructed in \cite{ChaoZhangEOStratification}, generalizing previous construction of \cite{Moonen&WedhornDiscreteinvariants} and \cite{ViehmannWedhornEOPELtype} for PEL type Shimura varieties, to define and study the Ekedahl-Oort stratification of \( S \); see \cite{YanLocCon} for a local construction of \(\zeta\) which is more adapted to this work and an introduction to the history of it.  
	
	Our view of zip period maps has broadened: beyond their role in defining and analyzing the Ekedahl–Oort stratification of Shimura varieties, we now recognize them as instances of a wider family of maps, which we term \emph{Frobenius period maps}.These are typically defined by trivializing linear objects, such as vector bundles, with Frobenius structures.

	\subsection{Motivation for this work} \label{S:Motivation}
	Attached to the pair \((G, \mu)\) is also the \emph{algebraic} stack, \([\prescript{}{1}{\mathcal{C}_1^\mu}/\mathrm{Ad}_\varphi G]\) which is a function field analogue of \(G\text{-}\textsf{Zip}^{\mu}\). Here, \(\prescript{}{1}{\mathcal{C}_1^\mu}\) denotes Viehmann’s double coset space of type \(\mu\), a sub-quotient sheaf of the loop group \(\mathcal{L}G\), which turns out to be represented by a scheme \cite[Theorem \textbf{A}]{Yan23zip}, and \(\mathrm{Ad}_\varphi G\) denotes the \(\varphi\)-conjugation action of \( G \) on \(\prescript{}{1}{\mathcal{C}_1^\mu}\). The map \(\varphi: G \to G\) is the relative Frobenius of \( G \) (as \( G \) is defined over \(\mathbb{F}_p\) we have \( G = G^{\varphi} \)). Additionally, we established in~\cite{Yan23zip} a sequence of algebraic stacks
	\begin{equation}\label{Eq:Mapdelta}
		[\prescript{}{1}{\mathcal{C}_1^\mu}/\mathrm{Ad}_{\varphi} G]\ \xrightarrow{\delta}\ G\text{-}\mathsf{Zip}^{\mu}\ \rightarrow\ [\prescript{}{1}{\mathcal{C}_1^{\varphi(\mu)}}/\mathrm{Ad}_{\varphi} G]
	\end{equation}
	that become isomorphisms upon perfection. In particular, we have an isomorphism
	\begin{equation}\label{Eq:PerfIsom}
		\delta^{\perf}:[\prescript{}{1}{\mathcal{C}_{1}^{\mu}}/\mathrm{Ad}_{\varphi} G]^{\perf}\ \cong\ (G\text{-}\textsf{Zip}^{\mu})^{\perf}.
	\end{equation}
	This article is driven by the question: does the zip period map factor through the connecting map $\delta$?  A positive answer would not only refine the map, but-more importantly—establish a direct morphism from the number‑theoretic object $S$ to the function‑field object $[\prescript{}{1}{\mathcal{C}_{1}^{\mu}}/\mathrm{Ad}_{\varphi} G]$.
	
	Building upon\cite{Yan18}, our strategy is to construct the map from \( S \) to the moduli stack \( [\prescript{}{1}{\mathcal{C}_1^{\mu}}/\mathrm{Ad}_{\varphi} G] \) by studying the reduction of Breuil-Kisin prismatic cohomology of the universal abelian scheme \( \sA \) over \( \mathcal{S} \). However, a detailed analysis in \cite{YanLocCon} indicated that the desired factorization does not happen in the Zariski topology. This limitation arises from the non-uniqueness of Frobenius lifts on a \( p \)-adic \( W \)-algebra \( R \), unless \( R \) is perfect (in other words, the \( p \)-power Frobenius on \( R/pR \) is bijective). Experience gained while conducting the work \cite{YanLocCon} suggests that this refinement should be achievable within the prismatic topology of Bhatt and Scholze \cite{BSPrism}. This observation, together with the results of \cite{Yan18}, leads us to the guiding principle: \begin{center} \emph{The desired refinement should arise as the “reduction’’ of an \textbf{integral} Frobenius period.} \end{center}
	The first goal of this article is to construct such an integral Frobenius period map:
	
	\vspace{1em}
	\noindent \textbf{The first Main Result (informal statement).}
	In this article, we construct an \emph{integral Frobenius period map}
	\[
	\xi_{\BKcrispris} : \mathcal{S}_{\tilde{\prism}_{\cris}}
	\;\longrightarrow\;
	\Bigl[\mathbf{C}_{\BKcrispris}^{\tilde{\mu}} \big/ \mathrm{Ad}_\varphi \,\mathcal{L}^+_{\BKcrispris} \mathcal{G} \Bigr],
	\]
	over the framed crystalline prismatization \(\mathcal{S}_{\tilde{\prism}_{\cris}}\) of $\mathcal{S}$. We refer to it as the \emph{Breuil-Kisin Frobenius period map} of $(\mathcal{S}, S)$. The meaning and construction of this map will be explained in the next two subsections.
	
	\subsection{The source space for the integral Frobenius period map}
	
	The pair \( (W, p) \) constitutes a crystalline prism, as defined in \cite{BSPrism}, which shall be referred to as the \emph{base crystalline prism}. Attached to \( \underline{W} \) is the \emph{base Breuil-Kisin prism}
	\[
	\underline{\mathfrak{S}}\ :=\ (\mathfrak{S} = W[[t]], E(t) := t + p, \varphi_{\mathfrak{S}}).
	\]
	We write
	\[
	W_{\prism_{\cris}} = \kappa_{\crispris} = (\Spf W/\underline{W}) = (\operatorname{Spec} \kappa/\underline{W})
	\]
	for the sites of (crystalline) prisms over \( \underline{W} \), and
	\(
	W_{\BKprism} = (\Spf W/\underline{\mathfrak{S}})
	\)
	for the sites of (unramified Breuil-Kisin) prisms over \( \underline{\mathfrak{S}} \). The canonical projection \( \underline{\mathfrak{S}} \to \underline{W} \) induces a canonical inclusion functor \( W_{\crispris} \subseteq\, W_{\BKprism} \). The source space of our integral Frobenius period map is the following \emph{modified} prismatic site: 
	\begin{definition}
		Let \( \widehat{\mathcal{S}} \) denote the \( p \)-adic formal completion of \( \mathcal{S} \). The \emph{framed crystalline prismatization} of \( \mathcal{S} \) over \( \underline{W} \) is the site \( \mathcal{S}_{\tilde{\prism}_{\cris}} \) consisting of pairs \( (\, \underline{R},\, x: \Spf R \to \widehat{\mathcal{S}}\, ) \), where \( \underline{R} \in W_{\prism_{\cris}} \) is a crystalline prism over \( \underline{W} \), and \( x: \Spf R \to \widehat{\mathcal{S}} \) is a morphism of \( p \)-adic formal schemes over \( \Spf W \).
	\end{definition}
	We now justify the role of \( \mathcal{S}_{\tilde{\prism}_{\cris}} \) as the source space. As a category, its objects are the crystalline integral points of \( \widehat{\mathcal{S}} \) equipped with Frobenius structures. Moreover, its structural map to \( W_{\crispris} \) makes it a sheaf over \( W_{\crispris} \)  whence enabling a \emph{geometric} perspective. We will treat \( \mathcal{S}_{\tilde{\prism}_{\cris}} \) as a \( W_{\crispris} \)-\(\Space \) which is obtained from the \( W_{\BKprism}\!\)-\(\Space\ \mathcal{S}_{\BKprism} := (\widehat{\mathcal{S}}/\underline{\mathfrak{S}})_{\prism} \). Informally, a \(\Space\) is a category of \emph{points} in a general sense (see \S\,\ref{S:IntPerMapIntro}). 
	
	Thanks to the work of Imai, Kato and Youcis~\cite{IKYPrisReal}, we know that there exists a prismatic \( \mathcal{G} \)-torsor \( \mathbb{J}_{\prism} \) attached to \( \widehat{\mathcal{S}} \), which forms a key ingredient for this work. This torsor parametrizes trivializations of the pair
	\[
	\bigl(\, \mathrm{H}^1_{\prism}(\widehat{\sA}/\widehat{\sS}),\, \mathrm{T}_{\prism}\, \bigr)
	\]
	where \( \mathrm{H}^1_{\prism}(\widehat{\sA}/\widehat{\sS}) \) is the first relative prismatic cohomology of \( \widehat{\sA} \) over \( \widehat{\sS} \), and
	\(
	\mathrm{T}_{\prism}\ \subseteq\ \mathrm{H}^1_{\prism}(\widehat{\sA}/\widehat{\sS})^{\otimes}
	\)
	is the corresponding collection of prismatic tensors. We denote the restriction of \( \mathbb{J}_{\prism} \) to \( \mathcal{S}_{\BKprism} \) by \(\mathbb{J}_{\BKprism}\) and refer to it as the \emph{Breuil-Kisin prismatic} \( \mathcal{G} \)-torsor of \( \mathcal{S} \).
	
	From \( \mathbb{J}_{\BKprism} \), we derive a key ingredient for the construction of $\xi_{\BKcrispris}$. It is the sheaf \( \mathbb{J}_{\BKcrispris} \) over \( \mathcal{S}_{\tilde{\prism}_{\cris}}\!\) (Definition~\ref{Def:BkCrisTors}) which is a torsor under the Breuil-Kisin positive loop group \( \mathcal{L}^+_{\BKcrispris}\mathcal{G} \) and a subspace of \( \mathbb{J}_{\BKprism} \). Here, \( \mathcal{L}^+_{\BKcrispris}\mathcal{G} \) is a group functor over \( W_{\crispris} \) associating each crystalline prism \( \underline{R} \) with the group \( \mathcal{G}(R[[t]]) \); See~\S\,\ref{S:PrisLoopGrp} for more on this.
	
	\subsection{The domain of the integral Frobenius period map}\label{S:IntPerMapIntro}
	Define \( \tilde{\mu}: \mathbb{G}_{m, W} \to \mathcal{G} \) as a cocharacter lifting \( \mu \). Set \( \mathbf{C}^{\tilde{\mu}}_{\BKcrispris} \) to be the sheafification of the presheaf on \( W_{\crispris} \) which assigns to each $\underline{R} \in W_{\crispris}$ the set
	\[
	\mathcal{G}(R[[t]]) \tilde{\mu}(E(t)) \mathcal{G}(R[[t]]) 
	\]
	where \(E(t) = t + p\). It is a subspace of the group functor \( \mathcal{L}_{\BKcrispris}\mathcal{G} \) on $W_{\crispris}$ given by \( \underline{R} \mapsto \mathcal{G}(R((t))) \) (see~\S\,\ref{S:PrisLoopGrp}) and serves as the period domain for our integral Frobenius period map.
	
	\vspace{1em}
	\noindent \textbf{Idea of construction of $\xi_{\BKcrispris}$}: it is
	induced by an \( \mathcal{L}^+_{\BKcrispris}\mathcal{G} \)-equivariant map 
	\[
	\xi_{\BKcrispris}^{\sharp}: \mathbb{J}_{\BKcrispris} \to \mathbf{C}_{\BKcrispris}^{\tilde{\mu}}
	\]
	of \( W_{\crispris}\!\)-sheaves, obtained by trivializing the Frobenius endomorphism of \( \mathrm{H}^1_{\prism}(\widehat{\sA}/\widehat{\sS}) \) preserving tensors.
	
	Our first goal has thus been achieved; see Lemma \ref{Lem: DefBKPerMap} for details. Below we concentrate on analyzing its reductions.

	\subsection{Framework of Base Reduction Diagrams}
	To clarify the principle in \S~\ref{S:Motivation} and present our main results, we introduce the simple yet useful framework of \emph{base reduction diagrams}, which formalise what it means for a space or map to be the reduction of another via prescribed \emph{reduction maps}. %Developed chiefly to analyse the reductions of \(\xi_{\BKcrispris}\), this framework should also prove useful in broader contexts (see \S\ref{S:LanguagueSpace} and \S\ref{S:ReductionTheory}).  
	Readers are encouraged to skip this subsection on a first reading and to return to it as needed.
	
	\begin{definition}
		A \emph{crystalline \( W \)-algebra} is a flat \( W \)-algebra \( R \) that admits a crystalline prism structure for some Frobenius lift \( \varphi: R \to R \). A \emph{crystalline \( \kappa \)-algebra} is a \( \kappa \)-algebra admitting a crystalline frame, i.~e., there exists a crystalline prism \( \underline{R} \) such that \( \bar{R} = R/pR \).
	\end{definition}
	\begin{definition}
		We define \( W_{\cris} \) and \( \kappa_{\cris} \) as the sites of crystalline algebras over \( W \) and \( \kappa \), respectively. The sites \( W_{\BKprism}, W_{\crispris} = \kappa_{\crispris}, W_{\cris}, \) and \( \kappa_{\cris} \) are referred to as the \emph{base \(\mathsf{spaces}\)}. \emph{Maps} between these base \(\mathsf{spaces}\) are continuous functors.
	\end{definition}
	\begin{definition}
		A \(\mathsf{space}~X\) over a base \(\mathsf{space}~Y\) is a prestack of groupoids over \(Y\). We also say \(X\) is a \(\mathsf{space}\) \emph{fibered} over \(Y\). An object \( x \) of \( X \) is referred to as a \emph{point} and is denoted by \( x \in X \) after an abuse of notation. Alternatively, \( X \) can be described as a \( Y \)-\(\Space\) with the \emph{fiber \(\Space\)} at \( y \in Y \) given by the groupoid \( X(y) \). \emph{Maps} between \(\Spaces\) are morphisms of functors.
	\end{definition}
	As noted above, the framed crystalline prismatization \(\mathcal{S}_{\tilde{\prism}_{\cris}}\) is a sheaf over \(W_{\crispris}\) and hence can be viewed as a \( W_{\crispris}\!\)-\(\Space \).
	\begin{definition}[Base Reduction Diagram]
		The following sequence, referred to as the \emph{base reduction diagram}, describes natural maps between our base \(\mathsf{spaces}\)
		\begin{equation}\label{BRedDiagmIntro}
			W_{\BKprism} \xrightarrow{\pi} W_{\crispris} \xrightarrow{\pi_{\crispris}} W_{\cris} \xrightarrow{\pi_{\cris}} \kappa_{\cris}.
		\end{equation}
		Here, the functor \(\pi\) is given by  base change of prisms along the canonical projection \( \underline{\mathfrak{S}} \to \underline{W} \), \( \pi_{\crispris} \) denotes the defrobenius map \( (R, \varphi: R \to R) \mapsto R\) and \( \pi_{\cris} \) denotes the reduction modulo \(p\) map.
	\end{definition}
	In~\S\,\ref{S:RelBaseSpace}, we also define \emph{relative base \(\Spaces\)} for any smooth \( p \)-adic formal scheme over \( W \). More specifically, attached to the \( p \)-adic completion \( \widehat{\mathcal{S}} \) of \( \mathcal{S} \) are the relative base \(\Spaces\)
	\[
	\mathcal{S}_{\BKprism}, \quad \mathcal{S}_{\tilde{\prism}_{\cris}}, \quad \mathcal{S}_{\crispris} = S_{\crispris}, \quad \mathcal{S}_{\cris}, \quad S_{\widetilde{\cris}}\quad \text{and}\quad S_{\cris}
	\]
	which are naturally fibered over \( W_{\BKprism},\ W_{\tilde{\prism}_{\cris}}=W_{\crispris},\, W_{\crispris}, \, W_{\cris},\ W_{\cris}\) and \( \kappa_{\cris} \), respectively. For definitions of the last three \(\Spaces\), the reader is referred to~\S\,\ref{S:RelBaseSpace}.  We can further form a \emph{relative base reduction diagram}:
	\begin{equation}\label{RelBRedDiagmIntro}
		\xymatrix{
			& \mathcal{S}_{\tilde{\prism}_\cris} \ar[dd] \ar[dl]_{\mathrm{defrobenius}} \ar[dr]^{\partial\,\mathrm{Mod}_p} & \\
			\mathcal{S}_{\cris} \ar[dddr]_{\mathrm{Mod}_p} \ar[dr]^{\partial\mathrm{Mod}_p} && S_{\crispris} \ar[dl]^{\mathrm{defrobenius}} \ar[dddl]^{\mathrm{deprism}} \\
			& S_{\widetilde{\cris}} \ar[dd]^{\partial\,\mathrm{Mod}_p} & \\
			&& \\
			& S_{\cris}. &
		}
	\end{equation}
	Here each arrow is understood but see \S \ref{S:RelBasRedDiag} for more details. 
	For any map $f\colon Y \to Z$ in \eqref{BRedDiagmIntro} or \eqref{RelBRedDiagmIntro} and any $\mathsf{space}$ $\mathcal{F}$ over $Z$, we view $\mathcal{F}$ as a $\mathsf{space}$ over $Y$ via pullback.  We denote it by $f^{*}\mathcal{F}$, $\mathcal{F}|_{Y}$, or simply $\mathcal{F}$.  The fiber at $y \in Y$ is then $\mathcal{F}\bigl(f(y)\bigr)$.

	The term \emph{reduction} in ``reduction maps'' refers to reduction modulo an ideal or to the idea of the removal of something. The base reduction diagrams above allow us to make the following two definitions:

	\begin{definition}\label{Def:RedalongBasRedDiag}
		Let \( \mathrm{Red}: Y \to Z \) be a reduction map as in~\eqref{BRedDiagmIntro} or~\eqref{RelBRedDiagmIntro}, and \( \mathcal{F} \)  a \(\mathsf{space}\) over \( Y \). We say that \( \mathcal{F} \) is \emph{of reduction along \( \mathrm{Red} \)} (by \( \mathcal{G} \)) if there exists a \(Z\)-\(\mathsf{space}\ \mathcal{G} \), necessarily unique up to isomorphism, and an isomorphism \( \mathcal{F} \cong \mathcal{G}|_{Y} \) of \( Y \)-\(\mathsf{spaces} \).
	\end{definition}
	
	\begin{definition}\label{Def:RedMapalongBaseRedDiag}
		Let \( \mathrm{Red}: Y \to Z \) be a reduction map as in~\eqref{BRedDiagmIntro} or~\eqref{RelBRedDiagmIntro}, and \( f: \mathcal{F}_1 \to \mathcal{F}_2 \) a map of \(Y\)-\(\Spaces \). We say that \( f \) is \emph{of reduction along \( \mathrm{Red} \)} (by \( g \)) if there exist \( Z \)-\(\Spaces \) \( \mathcal{G}_1, \mathcal{G}_2 \) such that \(\mathcal{F}_i \cong \mathrm{Red}^* \mathcal{G}_i\) for \( i = 1, 2 \), and a map \( g: \mathcal{G}_1 \to \mathcal{G}_2 \) of \( Z \)-\(\Spaces \) intertwining \(f\) with \(\mathrm{Red}^*g\). In this case, \( g \) is called the \emph{reduction} of \( f \) (along \( \mathrm{Red} \)).
	\end{definition}
	
	\begin{remark}
		Theorems \ref{Thm:ExdRepIntd} and \ref{Thm:CryPrisTorRedIntd} below motivate our Definition~\ref{Def:RedalongBasRedDiag} above, while Theorem \ref{Thm:MainRedThm} motivates Definition \ref{Def:RedMapalongBaseRedDiag}. These definitions are modeled on the fact that a geometric object over \( W \), such as a \( W \)-scheme \( X \) with special fiber \( X_{\kappa} \), has characteristic \( p \) if and only if there exists a \( \kappa \)-scheme \( Y \) and an isomorphism \( X \cong \mathrm{Res}_{\kappa/W} Y \) of functors over \( (\mathbf{Alg}/W) \). This is equivalent to saying that the canonical reduction modulo \( p \) map from \( X \) to the Weil restriction functor \(\mathrm{Res}_{\kappa/W} X_{\kappa} \)  is an isomorphism. The framework of base reduction diagrams guides the step‑by‑step reduction procedure and is of independent interest in its own right.
	\end{remark}

	\subsection{Main results on the reductions of $\xi_{\BKcrispris}$}

	Let \( \mathbb{J}_{\crispris} \) denote the restriction of \( \mathbb{J}_{\prism} \) to \( \mathcal{S}_{\crispris} \). An immediate reduction of $\xi_{\BKcrispris}$ is the \emph{crystalline prismatic Frobenius period map} for \( S \):
	\[
	\xi_{\crispris}: S_{\crispris} \longrightarrow [\mathbf{C}_{\tilde{\prism}_\cris}^{\tilde{\mu}} / \mathrm{Ad}_{\varphi} \mathcal{L}^+_{\crispris} \mathcal{G}],
	\]
	corresponding to a $\mathcal{G}$-equivariant map 
	\(
	\xi_{\crispris}^{\sharp}: \mathbb{J}_{\crispris} \to \mathbf{C}_{\tilde{\prism}_\cris}^{\tilde{\mu}}
	\). We refer to Lemma \ref{Lem:DefCrisPerMap} for details and unexplained notations, and to \S~\ref{S:CompareClasCryPerMap} for the connection of $\xi_{\crispris}$ with classical Frobenius period map.
	
	Our main results on the reductions of the integral Frobenius period map are summarized as follows. 
	\begin{theorem}\label{Thm:MainRedThm} Write  \( \mathrm{J}_{\crispris} = \mathrm{J}_{\crispris} \otimes_W \kappa \) for the (mod $p$) crystalline prismatic $G$-torsor of $S$. 
		\begin{enumerate}
			\item The Breuil-Kisin Frobenius period map \( \xi_{\BKcrispris}^{\sharp} \) induces a map of \( W_{\crispris} \)-\(\Spaces \):
			\(
			\rho^{\sharp}_{\BKcrispris}: \mathrm{J}_{\crispris} \to (\prescript{}{1}{\mathcal{C}_{1}^{\mu}})_{\crispris},
			\)
			which in turn induces a map of \( W_{\crispris} \)-stacks:
			\[
			\rho_{\BKcrispris}: S_{\crispris} \longrightarrow [\prescript{}{1}{\mathcal{C}_1^{\mu}}/\mathrm{Ad}_{\varphi} G]_{\crispris}.
			\]
			
			\item The crystalline Frobenius period map \( \xi^{\sharp}_{\crispris} \) induces a map of \( W_{\crispris} \)-\(\Spaces \):
			\(
			\rho_{\crispris}^{\sharp}: \mathrm{J}_{\crispris} \to (\prescript{}{1}{ \mathbf{C}_{1}^{\tilde{\mu}}})_{\crispris},
			\)
			which in turn induces a map of \( W_{\crispris} \)-stacks:
			\[
			\rho_{\crispris}: S_{\crispris} \longrightarrow [\prescript{}{1}{ \mathbf{C}_{1}^{\tilde{\mu}}}/\mathrm{Ad}_{\varphi} G]_{\crispris},
			\]
			where $[\prescript{}{1}{ \mathbf{C}_{1}^{\tilde{\mu}}}/\mathrm{Ad}_{\varphi} G]_{\cris}$ is the mixed characteristic analogue of moduli stack $[\prescript{}{1}{\mathcal{C}_1^{\mu}}/\mathrm{Ad}_{\varphi} G]_{\cris}$. 
			\item The maps \( \rho_{\BKcrispris} \) and \( \rho_{\crispris} \) fit into the following commutative diagram of \( W_{\crispris} \)-stacks:
			\[
			\xymatrix{
				& S_{\crispris} \ar[dl]_{\rho_{\crispris}} \ar[dr]^{\rho_{\BKcrispris}} & \\
				[\prescript{}{1}{\mathbf{C}_1^{\tilde{\mu}}}/\mathrm{Ad}_{\varphi} G]_{\crispris} \ar[rr]_{\cong}^{\epsilon_{\crispris}} & & [\prescript{}{1}{\mathcal{C}_1^{\mu}}/\mathrm{Ad}_{\varphi} G]_{\crispris}
			}
			\]
			\item Let \( \delta_{\crispris} \) denote the restriction to \( W_{\crispris} \) of the morphism \( \delta \) in \eqref{Eq:Mapdelta}, and let \( \zeta_{\cris} \) denote the restriction of \( \zeta \) to \( \kappa_{\cris} \). Then the composition \( \delta_{\crispris} \circ \rho_{\crispris}: S_{\crispris} \to (G\text{-}\textsf{Zip}^{\mu})_{\crispris} \) is of reduction along \( W_{\crispris} \to \kappa_{\cris} \) by \( \zeta_{\cris} \).
		\end{enumerate}
	\end{theorem}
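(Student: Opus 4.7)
The strategy is to deduce each part from the machinery already built up. Parts (1)--(3) amount to quotient constructions combined with the matching formula of Theorem~\ref{Thm:ExdRepIntd}; the real content is in part~(4).

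For (2), I would restrict the \( \mathrm{Ad}_{\varphi}\mathcal{L}^+_{\crispris}\mathcal{G} \)-equivariance of \( \xi^{\natural}_{\crispris} \) to the normal subgroup \( \mathrm{K}_1 := \ker\bigl(\mathcal{L}^+_{\crispris}\mathcal{G} \to G\bigr) \). By Theorem~\ref{Thm:CryPrisTorRedIntd}, combined with a further reduction modulo \(p\), the source quotient \( \mathbb{J}_{\crispris}/\mathrm{K}_1 \) is identified with \( \mathrm{J}_{\crispris} \); by the very definition of \( \prescript{}{1}{\mathbf{C}_1^{\tilde{\mu}}} \) as the sheafification of the double coset taken modulo \( \mathrm{K}_1 \) on both sides, the target quotient is \( (\prescript{}{1}{\mathbf{C}_1^{\tilde{\mu}}})_{\crispris} \). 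The resulting map \( \rho^{\natural}_{\crispris} \) remains \( G \)-equivariant and descends to the stack map \( \rho_{\crispris} \). Part~(1) is identical in structure, starting from \( \xi^{\natural}_{\BKcrispris} \): the kernel of \( \mathcal{L}^+_{\BKcrispris}\mathcal{G} \to G \) subsumes both the mod-\(p\) and the mod-\(E(t)\) projections, and under the resulting quotient the formula \( g\tilde{\mu}(E(t))h \mapsto \bar{g}\mu(t)\bar{h} \) of Theorem~\ref{Thm:ExdRepIntd} identifies the target with \( (\prescript{}{1}{\mathcal{C}_1^{\mu}})_{\crispris} \). Part~(3) is then immediate: the two quotient procedures start from compatible period maps (\( \xi^{\natural}_{\BKcrispris} \) reduces to \( \xi^{\natural}_{\crispris} \) along the partial mod-\(p\) map \( \mathcal{S}_{\tilde{\prism}_{\cris}} \to S_{\crispris} \)), and the isomorphism \( \epsilon_{\crispris} \) is by construction the stackification of the explicit identification of Theorem~\ref{Thm:ExdRepIntd}.

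For part~(4) I would proceed in two stages. First, observe that \( \delta_{\crispris}\circ\rho_{\crispris} \) descends along \( W_{\crispris} \to \kappa_{\cris} \) to a \( \kappa_{\cris} \)-space map: the target \( (G\text{-}\mathsf{Zip}^{\mu})_{\crispris} \) is already a reduction along this arrow, while the sensitivity of \( \rho_{\crispris} \) to the Frobenius lift \( \varphi_R \) (the \( \mathrm{Ad}_{\varphi} \)-part of the data) is killed after composition with \( \delta \), because \( \delta \) was built in \eqref{Eq:Mapdelta} as a morphism of stacks over \( \kappa \) independent of any Frobenius choice. Second, identify the descended map with \( \zeta \). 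Passing to perfect \( \kappa \)-algebras, the crystalline prismatization collapses onto the classical theory and \( \xi^{\natural}_{\crispris} \) recovers the classical crystalline period map \( \xi^{\perf}_{\mathrm{cl}} \) through the relation \( \xi^{\perf}_{\mathrm{cl}} = [\varphi]^{\perf}\circ\xi^{\perf}_{\crispris} \) recorded above the theorem; under the perfection isomorphism \( \delta^{\perf} \) of~\eqref{Eq:PerfIsom}, the composition \( \delta^{\perf}\circ\rho^{\perf}_{\crispris} \) visibly recovers \( \zeta^{\perf} \). Invoking the local reconstruction of \( \zeta \) from \cite{YanLocCon}, which characterizes \( \zeta \) uniquely by its perfection, the identification then extends from \( S_{\cris}^{\perf} \) to all of \( \kappa_{\cris} \).

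The main obstacle is the second stage of part~(4): translating between the Hodge-filtration recipe defining \( \zeta \) and the prismatic-Frobenius recipe defining \( \rho_{\crispris} \). The bridge is the \( p \)-adic \'{e}tale \( \mathcal{G} \)-torsor \( \widehat{\mathbb{J}} \) built from \( \mathrm{H}^1_{\mathrm{dR}}(\widehat{\mathcal{A}}/\widehat{\mathcal{S}}) \) in Lemma~\ref{Lem:petaleTorsor}, which exhibits the mod-\(p\) prismatic Frobenius trivialization as precisely the datum of an \( F \)-zip; granting this, the identification on the perfect locus is a bookkeeping exercise subsumed by the \( \mathrm{K}_1 \)-quotient analysis already used in parts~(1) and~(2).
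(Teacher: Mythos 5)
Your treatment of parts (1)--(3) matches the paper's own route: the kernel $\mathrm{K}_1$ you introduce for part (2) is exactly the paper's $\mathcal{L}^1_{\crispris}\mathcal{G}$, the enlarged kernel for part (1) is the paper's $\mathcal{L}^{\dagger}_{\BKcrispris}\mathcal{G}$, and part (3) is indeed reduced (as in the paper) to a commutativity of reduction maps enforced by the very definition of $\epsilon_{\crispris}$ via Theorem~\ref{Thm:ExdRepIntd}. These parts are fine.

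Part (4), however, has a genuine gap in your second stage. After descending $\delta_{\crispris}\circ\rho_{\crispris}$ to a $\kappa_{\cris}$-map, you compare it with $\zeta_{\cris}$ only on the perfect locus $S_{\cris}^{\perf}$, and then invoke a claim that $\zeta$ is ``characterized uniquely by its perfection.'' No such uniqueness is established in \cite{YanLocCon} or anywhere else, and it cannot hold: restriction to perfect $\kappa$-algebras forgets all nilpotent directions, so two morphisms of $\kappa$-stacks with smooth algebraic target can perfectly well agree on $\kappa_{\perf}$ and differ on $\kappa_{\cris}$. The relation $\xi_{\mathrm{cl}}^{\perf}=[\varphi]^{\perf}\circ\xi_{\crispris}^{\perf}$ together with $\delta^{\perf}$ gives you the identity on $S_{\cris}^{\perf}$, but the theorem concerns $S_{\crispris}$ over all of $\kappa_{\cris}$. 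What the paper actually does is verify $\delta_{\crispris}\circ\rho_{\crispris}=\zeta_{\crispris}$ at \emph{arbitrary} crystalline points $\underline{\bar{x}}\in S_{\crispris}$ (a family of such points forms a Zariski cover of $S$), using the explicit local description on both sides: on the prismatic side, the mixed Cartier/zip isomorphism $\gamma_{\underline{\bar{x}}}=\tfrac{1}{p}\mathrm{F}|_{\varphi^*\Fil M}$ of Remark~\ref{Rmk: FilMDieuMod} and the $\mathsf{E}_{\mu}$-equivariant map $\eta^{\natural}$ of \S\ref{S:PrisGauge}; on the Hodge-filtration side, the local construction of $\zeta$ from \cite{Yan18}. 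This crystalline-point comparison is the substantive content you correctly identify as ``the bridge,'' but by restricting the bridge to the perfect locus you have dropped the part of the argument that carries the theorem. (Your stage-one descent is also more heuristic than proof --- the paper gets descent for free from the direct equality $\delta_{\crispris}\circ\rho_{\crispris}=\zeta_{\crispris}$, since $\zeta_{\crispris}$ is visibly pulled back along $W_{\crispris}\to\kappa_{\cris}$ --- but stage two is the real gap.)
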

	
	\begin{remark}\begin{enumerate}
			\item  One consequence of this theorem is that it globalizes while simultaneously furnishing a geometric and conceptual understanding of an otherwise mysterious map:  \[ (\delta^{\perf})^{-1} \circ \zeta^{\perf}: S^{\perf}\longrightarrow [\prescript{}{1}{\mathcal{C}_1^{\mu}}/\mathrm{Ad}_{\varphi} G]^{\perf}, \]
			that is, a map from an arithmetic object from \emph{number field} to that from \emph{function field}.
			\item By the final item of the theorem, the maps \( \rho_{\crispris} = \rho_{\BKcrispris} \) refine the zip period map \( \zeta \) in the prismatic topology. In fact, it determines \( \zeta \), despite not being of reduction along \( W_{\crispris} \to \kappa_{\cris} \), since \( \zeta_{\cris} \)  determines \( \zeta \) as \( S_{\cris} \) has enough crystalline points.
			
			\item As established in \cite{Yan23zip}, the moduli stack \( [\prescript{}{1}{\mathcal{C}_1^{\mu}}/\mathrm{Ad}_{\varphi} G] \) admits a zip stack interpretation, specifically an isomorphism of \( \kappa \)-stacks,
			\(\epsilon':
			[\prescript{}{1}{\mathcal{C}_{1}^{\mu}}/\mathrm{Ad}_{\varphi} G] \cong [G/\mathsf{R}_{\varphi} \mathsf{E}_{\mu}].
			\)
			Correspondingly, the composition \( \epsilon'_{\crispris} \circ \rho_{\BKcrispris} \) yields a \( (\mathsf{E}_{\mu})_{\crispris} \)-torsor \( (\mathsf{Z}_{\mu})_{\crispris} \) equipped with a \( (\mathsf{Z}_{\mu})_{\crispris} \)-equivariant map $\eta_{\crispris}$. We describe the pair \( ((\mathsf{Z}_{\mu})_{\crispris}, \eta_{\crispris}) \) in \S \ref{S:PrisGauge}. 
			
			\item  The $\mathsf{E}_{\mu}$-torsor \( (\mathsf{Z}_{\mu})_{\crispris} \) exists already in the Zariski topology (see \S \ref{S:UnivZip}). We call the pair \( (\mathsf{Z}_\mu, \rho_{\crispris}) \) the \emph{prismatic zip gauge} attached to \( (\mathcal{S}, S) \); it determines a double $G$-zip,
			%\[
			%\big(\mathrm{J}_-\subseteq \mathrm{J}, \, \mathrm{I}_+\subseteq \mathrm{I}\supseteq \mathrm{I}_-, \, \mathrm{J}_-/U_- \cong \mathrm{I}_+/U_+,\, (\mathrm{I}_+/U_+)^{\varphi} \cong \mathrm{I}_-/U_- \big),
			%\]
			which closely resembles Drinfeld's version of $G$-zip \cite{Drinfeld2023shimurian} (c.f. \cite{ShenGauge}). See \ref{Rmk:Conn} for more details.
		\end{enumerate}
		
	\end{remark}

	\subsection{Main results on Reductions of \(\Spaces\)}\label{SS:res}
	The results of this subsection should logically be presented before Theorem \ref{Thm:MainRedThm}. For example, Theorem \ref{Thm:ExdRepIntd} is already involved there.  
	
	Define \( \prescript{}{1}{\mathbf{C}_{1}^{\tilde{\mu}}} \) as the \( W_{\cris} \)-\(\Space \) obtained by sheafifying the presheaf on \( W_{\cris} \) that assigns the set
	\[ 
	\doublecoset{\mathrm{K}_1(R)}{\mathcal{G}(R) \tilde{\mu}(p) \mathcal{G}(R)}{\mathrm{K}_1(R)}
	\]
	to each crystalline algebra \(R \in W_{\cris}\), where \( \mathrm{K}_1(R) \) is the kernel of the reduction map \( \mathcal{G}(R) \to G(R/pR) \). The theorem below, stated as Theorem \ref{Thm:ExtdRepthm} in the main text, extends \cite[{Theorem $\mathrm{A}'$}]{Yan23zip}.
	\begin{theorem}\label{Thm:ExdRepIntd}
		The \( W_{\cris} \)-\(\Space\ \prescript{}{1}{\mathbf{C}_{1}^{\tilde{\mu}}} \) is of reduction along the map \( W_{\cris} \to \kappa_{\cris} \) by \( (\prescript{}{1}{\mathcal{C}_1^{\mu}})_{\cris} \). In other words, there is a canonical isomorphism of \( W_{\cris} \)-\(\Spaces \)
		\begin{align*}
			\prescript{}{1}{\mathbf{C}_{1}^{\tilde{\mu}}} \xrightarrow[\cong]{\epsilon}\ (\prescript{}{1}{\mathcal{C}_1^{\mu}})_{\cris}, \quad
			g\tilde{\mu}h\ \mapsto\ \bar{g}\mu(t)\bar{h}.
		\end{align*}
		In particular, the $\Space$ $\prescript{}{1}{\mathbf{C}_{1}^{\tilde{\mu}}}$ is independent of the choice of lift $\tilde{\mu}$ of $\mu$. 
	\end{theorem}

	\begin{theorem}\label{Thm:CryPrisTorRedIntd}
		The \( S_{\crispris}\!\)-\(\Space\ \mathbb{J}_{\crispris}\) is of reduction along the defrobenius map \( S_{\crispris}\! \to S_{\widetilde{\cris}} \).
	\end{theorem}
	This is the content of Theorem~\ref{Thm:KeyRedSpace}. In \S\,\ref{S:CryPrisTorRed}, we construct a \( p \)-adic formal scheme \( \widehat{\mathbb{J}} \) over \( \widehat{\mathcal{S}} \) from the relative cohomology \( \mathrm{H}^1_{\mathrm{dR}}(\widehat{\mathcal{A}}/\widehat{\mathcal{S}}) \cong \mathrm{H}^1_{\cris}(\widehat{\mathcal{A}}/\widehat{\mathcal{S}}) \) with its structure morphism \( \widehat{\mathbb{J}} \to \widehat{\mathcal{S}} \) being a \( p \)-adic \'{e}tale \( \mathcal{G} \)-torsor (see Lemma~\ref{Lem:petaleTorsor}). The existence of $\widehat{\mathbb{J}}$ is an outcome of the study of the relation between prismatic crystalline cohomology $\mathrm{H}^1_{\crispris}(\widehat{\mathcal{A}}/\widehat{\mathcal{S}})$ and the classical crystalline cohomology $\mathrm{H}^1_{\cris}(\widehat{\mathcal{A}}/\widehat{\mathcal{S}})$. The reduction of the crystalline prismatic \( \mathcal{G} \)-torsor \( \mathbb{J}_{\crispris} \) along \( \mathcal{S}_{\crispris} \to \mathcal{S}_{\tilde{\prism}_{\cris}} \) is given by the restriction \( \widehat{\mathbb{J}}_{\cris} \) to \( \mathcal{S}_{\cris} \) of \( \widehat{\mathbb{J}} \), which we show is of reduction along \( \mathcal{S}_{\cris} \to S_{\widetilde{\cris}} \) (Lemma \ref{Lem:TorRed}).  In particular,  $\mathrm{J}_{\crispris}$ is of reduction along the deprism map \( S_{\crispris} \to S_{\cris} \) by the restriction \( \mathrm{J}_{\cris} \) to \( S_{\cris} \) of the \( \kappa \)-scheme \( \mathrm{J} := \widehat{\mathbb{J}}\otimes_W\kappa \).

	\subsection{Structure of the article}	
	In \S~\ref{S:EtaleReal+FCrystal}, we review foundational material on prismatic topology, the \'etale realization functor and prismatic $F$-crystals for smooth $p$-adic formal schemes. \S~\ref{LocSys+PrisTorsonShimuraVar} shifts to the setting of Shimura varieties, focusing on the integral \'etale realization $\omega_{\mathsf{K}, \text{\'et}}^{\mathrm{an}}$ and its associated prismatic $\mathcal{G}$-torsor, summarizing the necessary input from \cite{IKYPrisReal}. \S~\ref{S:Prismatization} discusses the framed crystalline prismatization $\mathcal{S}_{\tilde{\prism}_{\cris}}$ for a $p$-integral model $\mathcal{S}$ of a Shimura variety, i.e., the source space of our integral Frobenius period map. The formalism applies to any smooth $p$-adic formal scheme $\mathfrak{X}$. \S\S~\ref{S:LanguagueSpace}--\ref{S:ReductionTheory} introduce the language of $\Spaces$ and the framework of \emph{base reduction diagrams}, while \S~\ref{S:LoopGp} defines various versions of loop groups associated with $\mathcal{G}$. These sections are primarily devoted to definitions.  \S~\ref{S:RepProof} contains the proof of Theorem~\ref{Thm:ExdRepIntd}. In \S~\ref{S:FrobPerMap}, we formally define the prismatic Frobenius period maps $\xi_{\BKcrispris}$ and $\xi_{\crispris}$. \S~\ref{S:RedGTorsor} proves Theorem \ref{Thm:CryPrisTorRedIntd} and \S~\ref{S:RedFrobPerMap} proves Theorem~\ref{Thm:MainRedThm}. The final section, \S~\ref{S:ZipGauges}, expands on the prismatic zip gauges introduced earlier and gives a zip-style description of $\rho_{\BKcrispris}$.
	
	\subsection{Notation and conventions}
	
	\begin{enumerate}\label{Notation}
		\item Let \( \mathcal{C} \) and \( \mathcal{D} \) be categories. By an inclusion \( \mathcal{C} \subseteq \mathcal{D} \) or \( \mathcal{C} \hookrightarrow \mathcal{D} \), we mean a faithful functor from \( \mathcal{C} \) to \( \mathcal{D} \), typically understood from context.
		\item For the purpose of formulating particular reduction problems, we use (Grothendieck) sites, often variants of the prismatic sites of Bhatt-Scholze, as alternatives to the fundamental notion of ``spaces." Our usage of sites is simplified: for instance, we do not consider \emph{morphisms} between sites. A \emph{map} of sites \( A \to B \) is simply a continuous functor from \( A \) to \( B \). We use the term ``restriction" loosely: if \( f: X \to Y \) is a map and \( \mathcal{F} \) is a mathematical object over \( Y \), the restriction of \( \mathcal{F} \) to \( X \) along \( f \), denoted by $\mathcal{F}|_X$, refers to the pullback \( f^* \mathcal{F} \), whenever clear in context. If \( \mathcal{F} \) is a (set-valued or groupoid valued) sheaf, so is \(  \mathcal{F}_X \) by the continuity of \( f \).
		
		\item For a ring \( R \), we denote by \((\mathbf{Alg}/R)\), \((\mathbf{FlatAlg}/R)\), and \((\mathbf{PerfAlg}/R)\) the categories of \( R \)-algebras, flat \( R \)-algebras, and perfect \( R \)-algebras, respectively. When regarded as sites, these categories are equipped with the flat topology. For a site $\mathcal{C}$, we use $\mathbf{Shv}(\mathcal{C}) $ to denote the category of set-valued sheaves over $\mathcal{C}$. 
		\item For a reductive group scheme \( \sG \) over a Dedekind domain \( R \) (e.g., \( R = \Zp \)) and a faithful representation \( \rho_0: \sG \hookrightarrow \mathrm{GL}_R(\Lambda_0) \), with \( \Lambda_0 \) a projective \( R \)-module, there exists a set of tensors \( \mathrm{T}_0 \subseteq \Lambda_0^{\otimes} \) such that \( \sG \) is the scheme-theoretic stabilizer of \( \mathrm{T}_0 \) \cite{BroshiGTors}. We write \( \sG = \mathrm{Stab}(\rho_0, \Lambda_0) \) for such a realization.
		
		\item Let \( \mathcal{G} \) be a reductive group over \( \Zp \). For a \( \Zp \)-linear tensor category \( \mathcal{C} \), the notation \( \mathcal{G}\text{-}\mathcal{C} \) denotes the category of \( \mathcal{G} \)-objects in \( \mathcal{C} \), i.e., the category of \( \Zp \)-linear tensor functors \( \omega: \mathbf{Rep}_{\Zp}(\mathcal{G}) \to \mathcal{C} \).
		
	\end{enumerate}

	\subsection{Acknowledgment}
	I would like to thank Fabrizio Andreatta for sharing with me the idea of establishing a connection between Breuil-Kisin modules, zip period maps, and Viehmann's double coset space, an idea that initially informed my PhD thesis and form the foundation for this work. I am also grateful to Naoki Imai, Tong Liu,  Yong-Suk Moon, Mao Sheng, Angel Toledo, Alex Youcis, Chia-Fu Yu, and Chao Zhang for their valuable discussions and encouragement.
	
	\section{Etale realization functor and  prismatic $F$-crystals}\label{S:EtaleReal+FCrystal}
	We first introduce some notations that will be effective in this section. Let \(\kappa\) be a finite extension of $\mathbb{F}_p$, \(W := W(\kappa)\), and \(K_0 = W\left[\frac{1}{p}\right]\). Let \(K/K_0\) be a finite totally ramified extension of degree \(e\geq 1\), and \({\mathcal{O}}_K\) be its ring of integers.  Let $\pi$ be a uniformizer of $K$, and $E=E(t)\in W[t]$ be the minimal polynomial of $\pi$. Though for later application we will take \(K = K_0\), we do not assume this in this section.
	
	\subsection{Prisms and prismatic sites}\label{S:BSetupPrism}
	
	We first recall some definitions and facts on prisms, adapted to our needs, following \cite{BSPrism} and \cite[\S 1.1]{IKYTannak}.
	
	A ($p$-torsion free) \emph{prism} $\underline{A}$ is a triple $(A, \varphi, I)$, where $A$ is a $p$-torsion free ring together with a Frobenius lift $\varphi = \varphi_A$ of $A$, and $I \subseteq A$ is an invertible ideal such that $A$ is derived $(p, I)$-complete and $p \in I + \varphi(I)$. Often we write $\underline{A} = (A, I)$ when the Frobenius $\varphi$ is clear or irrelevant for the discussion, and similarly sometimes we write $\underline{A} = (A, \varphi)$ when $I$ is clear. 
	
	A prism $\underline{A} = (A, I)$ is \emph{bounded} if the quotient ring $A/I$ has bounded $p^\infty$-torsion, i.e., $A/I[p^\infty] = A/I[p^n]$ for some integer $n$.  For a bounded prism $\underline{A}$, the underlying ring $A$ is (classically) $(p, I)$-complete and $A/I$ is $p$-adically complete (see e.g., \cite[Lemma 1.2]{IKYTannak}). A prism $\underline{A} = (A, I)$ is \emph{crystalline} if we have $I = pA$. In particular, a crystalline prism is $p$-torsion free. In the remaining part of this article, by a prism we always mean a $p$-torsion free and bounded prism, unless otherwise specified. 
	
	A \emph{morphism of prisms} $(A, I) \to (B, J)$ is a homomorphism of rings $A \to B$, compatible with Frobenius structure and sending $I$ into $J$. By the rigidity property of morphisms between prisms \cite[Proposition 3.5]{BSPrism}, the existence of such a morphism implies that we necessarily have $J = I B$. A morphism of prisms $(A, I) \to (B, J)$ is \emph{$I$-completely flat} (resp. \emph{faithfully flat, smooth, \'etale}) if $B \otimes^{\mathrm{L}}_A A/I$ is concentrated in degree $0$ and the canonical homomorphism $A/I \to B \otimes^{\mathrm{L}}_A A/I$ is flat (resp. faithfully flat, smooth, \'etale).
	
	For each prism $\underline{A} = (A, I)$, the ideal $I$ is finitely generated.  Since $I$ is an invertible ideal, the open subset  $ U(I):=\mathrm{Spec} A - \mathrm{V}(I)$ is affine. Its ring of global sections is denoted by $A[1/I]$.  For each mathematical object $X$ (e.g., sheaves, schemes, modules) over $A$, we denote by $X[1/I]$ its restriction on $U(I)$; for example, for each $A$-module $M$, we write $M[1/I]$ for the base change $M \otimes_A A[1/I]$. We write $U(\underline{A})$ for the subset $\mathrm{Spec} A - V(p, I)$. Clearly we have $U(I)\subseteq U(\underline{A})$. Hence for a vector bundle $\sE$ (or other meaningful mathematical objects) over $U(\underline{A})$, the notation $\sE[1/I]$ is understood. 
	
	\begin{proposition}\label{Prop: ProduPrism}
		Let $\underline{A}=(A, I)$ be a (bounded) prism. \begin{enumerate}
			\item Let $B$ be a $(p, I)$-adically complete $A$-algebra, then $A \to B$ is $(p, I)$-completely flat (resp. faithfully flat, smooth, \'etale) if and only if the corresponding morphism $\Spf B\to \Spf A$ is $(p, I)$-adically flat (resp. faithfully flat, smooth, \'etale).
			\item Let $B$ be a $(p, I)$-adically complete $A$-algebra. If $A\to B$ is $(p, I)$-completely \'etale, there exists a unique Frobenius structure on $B$, which makes  $(B, IB)$ a (bounded $p$-torsion free) prism and the ring map $(A, I)\to (B, IB)$ is a morphism of prisms. 
		\end{enumerate}
	\end{proposition}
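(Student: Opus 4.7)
The plan is to handle (1) and (2) separately, with (1) being essentially formal manipulations relating derived and classical $(p,I)$-adic notions, and (2) requiring the key input of \'{e}tale lifting of Frobenius.

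For part (1), the approach is to reduce each property to its mod $(p,I)^n$ analogue for all $n$. Since $B$ is $(p,I)$-adically complete, $B = \varprojlim_n B/(p,I)^n$, and under the boundedness hypothesis on $\underline{A}$ this classical completeness coincides with derived $(p,I)$-completeness. The statement that $A\to B$ is $(p,I)$-completely flat unfolds to saying $B \otimes^{\mathbf{L}}_A A/(p,I)$ sits in degree zero and is flat over $A/(p,I)$; by the classical local criterion of flatness applied level by level this is equivalent to each $B/(p,I)^n$ being flat over $A/(p,I)^n$, which is precisely the $(p,I)$-adic flatness of $\Spf B \to \Spf A$. The faithfully flat case adds surjectivity on $\operatorname{Spec}(B/(p,I)) \to \operatorname{Spec}(A/(p,I))$, which is preserved under the same reductions. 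For smooth and \'{e}tale, one additionally invokes the standard lifting-criterion characterizations of formally smooth and formally \'{e}tale morphisms between $(p,I)$-adic formal schemes, combined with finite presentation on each truncation.

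For part (2), the Frobenius lift is constructed in two steps. First, since $B$ is $(p,I)$-completely \'{e}tale, $B/p$ is \'{e}tale over $A/p$, so the relative Frobenius $(B/p) \otimes_{A/p, \varphi_A} A/p \to B/p$ is an isomorphism (it is a universal homeomorphism that is also \'{e}tale). This means the absolute Frobenius $b \mapsto b^p$ on $B/p$ is the unique $A/p$-algebra endomorphism of $B/p$ lying over $\varphi_A \bmod p$. Second, applying the infinitesimal lifting property of the $(p,I)$-adically \'{e}tale morphism $\Spf B \to \Spf A$ to the tower $\{B/(p,I)^n\}$, one uniquely lifts the mod-$p$ Frobenius to a ring endomorphism $\varphi_B : B \to B$ with $\varphi_B \circ \alpha = \alpha \circ \varphi_A$ and $\varphi_B(b) \equiv b^p \pmod{p}$, where $\alpha: A \to B$ is the structure map. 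Uniqueness of $\varphi_B$ is built into the lifting property.

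Once $\varphi_B$ is in hand, verifying that $(B, IB, \varphi_B)$ is a bounded $p$-torsion free prism is routine: $p$-torsion freeness of $B$ follows from flatness over the $p$-torsion free ring $A$; $IB$ is invertible by base-change of the invertible ideal $I$ along the flat map $A \to B$; derived $(p, IB)$-completeness is immediate from $(p,I)$-adic completeness of $B$; boundedness of $B/IB$ descends from that of $A/I$ via flatness of $A/I \to B/IB$; and $p \in IB + \varphi_B(IB)$ follows from $p \in I + \varphi_A(I)$ together with $\varphi_B|_A = \varphi_A$. The resulting $(A, I) \to (B, IB)$ is then a morphism of prisms by construction. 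The main obstacle I anticipate is in (1): carefully equating derived $(p,I)$-complete flatness with classical $(p,I)$-adic flatness requires boundedness of $\underline{A}$ and a nontrivial manipulation of the inverse system $\{B/(p,I)^n\}$; the subsequent cases of faithful flatness, smoothness, and \'{e}taleness as well as the Frobenius lift in (2) are then essentially routine.
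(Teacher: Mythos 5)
Your proposal re-derives (1) and the Frobenius lift in (2) directly, whereas the paper handles both of those points simply by citing \cite[Lemma 1.2, Proposition 1.3]{IKYPrisReal}. Re-proving them (via level-by-level reduction, the local flatness criterion, and \'etale lifting of the Frobenius on $B/p$) is a legitimate alternative route, though it leaves several steps compressed (for instance, passing from ``$(p,I)$-completely \'etale'' to ``$B/p$ is \'etale over $A/p$'' needs a word on topological invariance of the \'etale site or on algebraization).

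Where there is a genuine gap is precisely in the one piece of the statement that the paper actually proves rather than cites: the $p$-torsion freeness of $B$. You conclude it ``from flatness over the $p$-torsion free ring $A$,'' but $B$ is only $(p,I)$-completely flat over $A$ --- a condition on truncations (or a derived condition) that does not give honest flatness of $B$ over $A$ outside, say, the Noetherian setting. The paper's argument avoids this by working modulo $p$-powers: from $p$-adic flatness one has that each $B/p^nB$ is flat over $A/p^nA$; since $A$ is $p$-torsion free, the multiplication-by-$p$ map $A/p^{n-1}A \hookrightarrow A/p^nA$ is injective, and flatness makes the induced map $B/p^{n-1}B \to B/p^nB$ injective too; therefore any $p$-torsion element of $B$ lies in $\bigcap_n p^n B$, and one then invokes the $p$-adic \emph{separatedness} of $B$ to conclude it is zero. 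Your write-up omits the separatedness step entirely and elides the flatness issue. To make your argument correct, either replace ``flat over $A$'' with this truncated-flatness-plus-separatedness argument, or justify why $B$ is genuinely flat over $A$ in your setting; as stated, the inference does not go through.
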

	\begin{proof} This is \cite[Lemma 1.3]{IKYTannak} and \cite[Proposition 1.4]{IKYTannak}, except the $p$-torsionness statement in (2), which we now address.  Since $ B$ is $(p, I)$-adically complete, it is $p$-adically complete (hence $p$-adically separated). On the other hand, since $A\to B$  is $(p, I)$-adically \'etale, hence is $p$-adically \'etale, in particular, $p$-adically flat. Hence for each $n$, $B/p^nB$ is flat over $A/p^nA$ for all $n\geq 1$. Using the flatness of these maps (considering the injective map $A/p^{n-1}A \hookrightarrow A/p^nA$ given by multiplication by $p$) and the fact that $B$ is $p$-adically separated, one concludes that $B$ is indeed $p$-torsion free. 
	\end{proof}

	Let $\mathfrak{X}$ be a $p$-adic formal scheme. The \emph{absolute prismatic site} $\mathfrak{X}_{\prism} = \mathfrak{X}_{\prism, \mathrm{flat}}$ of $\mathfrak{X}$ has as its underlying category the opposite category of triples $\underline{x} = (\underline{R}, x)$, where $\underline{R} = (R, I)$ is a bounded prism and $x: \Spf R/I \to \mathfrak{X}$ is a morphism of $p$-adic formal schemes. Morphisms in this category consist of morphisms of prisms $(\underline{R}, x) \to (\underline{A}, x')$ such that the induced morphisms $\Spf A \to \Spf R$ intertwine $x$ and~$x'$. 
	
	The topology of $\mathfrak{X}_{\prism}$ is given by declaring that a family $\{\underline{R} \to (A_i, J_i)\}$ of morphisms is a covering if the corresponding family $\{\Spf A_i/J_i \to \Spf R/I\}$ is a covering of $\Spf R/I$ in the flat topology, i.e., the corresponding map $\sqcup_i \Spf A_i/J_i \to \Spf R/I$ is $p$-adically faithfully flat and quasi-compact. It follows from \cite[Corollary 3.2]{BSPrism} that $\mathfrak{X}_{\prism}$ is indeed a site. When $\mathfrak{X}$ is affine, say $\mathfrak{X}=\mathrm{Spec} R$, we also write $(\Spf R)_{\prism}$ as $R_{\prism}$ and $x: \Spf (A/I) \to \Spf R$ as $x: R \to A/I$. 
	
	The \emph{structure sheaf} ${\mathcal{O}}_{\mathfrak{X}_{\prism}}$ and the reduced structure sheaf of $\overline{\mathcal{O}}_{\mathfrak{X}_\prism}$ are defined to be the presheaf by $\mathcal{O}_{\mathfrak{X}_{\prism}}(A, I, x) = A$, resp. $\overline{\mathcal{O}}_{\mathfrak{X}_{\prism}}(A, I, x)=A/I$. Indeed, they are \emph{sheaves} by \cite[Corollary 3.12]{BSPrism}. Moreover, so is the presheaf $\mathcal{O}_{\mathfrak{X}_\prism}[1/I_\prism]$ defined by $\mathcal{O}_{\mathfrak{X}_ \prism}[1/I_\prism](A, I, x)=A[1/I]$. All the Frobenius maps $\varphi_R$ in prisms $\underline{R} = (R, I)$ induce a morphism of sheaves of rings: $\varphi_{\mathfrak{X}_{\prism}}: \mathcal{O}_{\mathfrak{X}_{\prism}} \to \mathcal{O}_{\mathfrak{X}_{\prism}}$.
	
	For a morphism $f: \mathfrak{X} \to \mathfrak{Y}$ of $p$-adic formal schemes, the functor 
	$\mathfrak{X}_{\prism} \to \mathfrak{Y}_{\prism}$ is cocontinuous and hence induces a morphism of topoi,
	\[
	(f_{\prism, *}, f_{\prism}^*): \mathbf{Shv}(\mathfrak{X}_{\prism}) \to \mathbf{Shv}(\mathfrak{Y}_{\prism}).
	\]
	We write the $i$-th \emph{relative prismatic cohomology} of $\mathfrak{X}$ over $\mathfrak{Y}$ as
	\(
	\mathrm{H}^i_{\prism}(\mathfrak{X}/\mathfrak{Y}) := \mathbf{R}^i f_{\prism, *} (\mathcal{O}_{\mathfrak{X}_{\prism}}). 
	\)
	The Frobenius map $\varphi_{\mathfrak{X}_{\prism}}: \mathcal{O}_{\mathfrak{X}_{\prism}} \to \mathcal{O}_{\mathfrak{X}_{\prism}}$ induces  (linearized) morphisms between cohomologies: 
	\begin{equation}\label{Eq: PrisFCryExamp}
		\varphi_{\mathrm{H}^i_{\prism}(\mathfrak{X}/\mathfrak{Y})}:   \varphi^* \mathrm{H}^i_{\prism}(\mathfrak{X}/\mathfrak{Y})\to \mathrm{H}^i_{\prism}(\mathfrak{X}/\mathfrak{Y}). 
	\end{equation}
	
	\subsection{Small formal \({\mathcal{O}}_K\)-algebras}\label{S: SmallFormal}
	Following \cite{DLMSPrisCris}, by a \emph{small formal} ${\mathcal{O}}_K$-algebra we mean a Zariski connected $p$-adically complete ${\mathcal{O}}_K$-algebra $R$, such that there is a $p$-adic \'etale morphism
	\begin{equation}\label{Eq: FormalFraming}
		\iota: {\mathcal{O}}_K\langle t_1^{\pm}, \ldots, t_d^{\pm} \rangle \to R,
	\end{equation}
	for some $d \geq 0$. Here ${\mathcal{O}}_K\langle t_1^{\pm}, \ldots, t_d^{\pm} \rangle$ denotes the $p$-adic completion of the polynomial algebra ${\mathcal{O}}_K[t_1^{\pm}, \ldots, t_d^{\pm}]$. Such an $\iota$ is called a \emph{formal framing} of the small formal ${\mathcal{O}}_K$-algebra $R$. By the topological invariance of the \'etale site of a formal scheme, there exists a unique $p$-adic \'etale map \(\iota_0: W<t_1^{\pm}, \ldots, t_d^{\pm}> \to R_0,\) such that $R=R_0\otimes_W O_K$ and $\iota = \iota_0 \otimes_W \mathrm{id}_{{\mathcal{O}}_K}$. We also call $\iota_0$ a formal framing of $R$.
	
	Small formal ${\mathcal{O}}_K$-algebras fall into the category of \emph{base ${\mathcal{O}}_K$-algebras} in the sense of \cite[\S 1.1.5]{IKYPrisReal}. Hence all the results developed for base ${\mathcal{O}}_K$-algebras in \textit{loc. cit.} apply to small formal ${\mathcal{O}}_K$-algebras.
	
	Let \( \mathfrak{X} \) be a smooth \( p \)-adic formal scheme over \( \mathcal{O}_K \). Then by a result of Kedlaya \cite[Lemma 4.9]{BhattSpecializing}, $\mathfrak{X}$ admits a basis (in the Zariski topology) consisting of formal spectrum's of small formal $O_K$-algebras.
	
	\subsection{{Breuil-Kisin prisms attached to small formal ${\mathcal{O}}_K$-algebras}}\label{S: BKPrism}
	Let \(A\) be a small formal \({\mathcal{O}}_K\)-algebra with a formal framing \(\iota_0: W\langle t_1^{\pm}, \ldots, t_d^{\pm} \rangle \to A_0\). Denote by \(\varphi: W\langle t_1^{\pm}, \ldots, t_d^{\pm} \rangle \to W\langle t_1^{\pm}, \ldots, t_d^{\pm} \rangle\) the ``canonical" Frobenius lift of \(W\langle t_1^{\pm}, \ldots, t_d^{\pm} \rangle\), whose restriction on \(W_0\) is the unique Frobenius lift of \(W\) and sends \(t_i\) to \(t_i^p\). Then,
	\[
	\underline{W\langle t_1^{\pm}, \ldots, t_d^{\pm} \rangle} := \big(W\langle t_1^{\pm}, \ldots, t_d^{\pm} \rangle, \varphi, (p)\big)
	\]
	forms crystalline prism. 
	
	By Proposition \ref{Prop: ProduPrism}, the formal framing \(\iota\) induces a unique crystalline prism structure on \(A_0\), denoted by
	\(
	\underline{A_0} := \big(A_0, \varphi_{A_0}, (p)\big),
	\)
	together with a morphism of prisms \(\iota_0: \underline{W\langle t_1^{\pm}, \ldots, t_d^{\pm} \rangle} \to \underline{A_0}\). This in turn induces a \(p\)-torsion free prism,
	\(
	\underline{\mathfrak{S}_A} = \big( \mathfrak{S}_A, \varphi_{\mathfrak{S}_A}, (E) \big),
	\)
	where \(\mathfrak{S}_A := A_0[[t]]\) and \(\varphi_{\mathfrak{S}_A}: \mathfrak{S}_A \to \mathfrak{S}_A\) is given by \(\varphi_{A_0}\) on \(A_0\) and sends \(t\) to \(t^p\). Note that we have $\mathfrak{S}_A/(E)=A$. The prism \(\underline{\mathfrak{S}_A}\) is called the \emph{Breuil-Kisin prism} attached to the small formal \({\mathcal{O}}_K\)-algebra \(A\) (and the additional choice of formal framing \(\iota_0\)). The data \((\underline{\mathfrak{S}_A}, A = \mathfrak{S}_A / (E))\) induces an object of \((A)_{\prism}\), which we often still denote by \(\underline{\mathfrak{S}_A}\).

	\subsection{Prismatic {$F$}-crystals and Prismatic torsors}
	
	Let $\mathfrak{X}$  be a smooth $p$-adic formal ${\mathcal{O}}_K$-scheme. A \emph{prismatic $F$-crystal (in vector bundles)} over $\mathfrak{X}_{\prism} $ is an assignment which assigns each object \(\underline{A}\) of \(\mathfrak{X}_{\prism}\) a finite projective \(A\)-module \(M_A\), together with an isomorphism of \(A[1/I_A]\)-modules, called the \emph{Frobenius map} of \(M_A\),
	\begin{equation}\label{Eq: Frob of PirsCris}
		\varphi_{M_A}: \varphi_A^* M_A[1/I_{A}] \cong M_A[1/I_{A}],
	\end{equation}
	and the following compatibility data: for each morphism \(f: \underline{A} \to \underline{B}\) in \(\mathfrak{X}_{\prism}/T\), an isomorphism
	\(
	f^* M_A \cong M_B,
	\)
	compatible with Frobenius maps. The data is required to satisfy clear compatibility for each composition of morphisms \(\underline{A} \to \underline{B} \to \underline{C}\) in \(\mathfrak{X}_{\prism}\). 
	
	A prismatic $F$-crystal is said to be \emph{effective} if for each $\underline{A}$, the Frobenius map $\varphi_{M_A}$ is induced by a map $\varphi_{M_A,0}: \varphi^*M_A \to M_A$ of $A$-modules such that $\varphi_{M_A}=\varphi_{M_A, 0}[1/I_{A}]$; cf. \cite[Definition 4.1]{BSPrisCrys}. Due to the $I_A$-torsion freeness of $M_A$, such $\varphi_{M_A,0}$, if exists, is necessarily injective and unique. Since $M_A$ is finitely generated and the ideal $I_A$ is locally principal, there exists a smallest integer $r(M_A)$ such that the cokernel of $\varphi_{M_A,0}$ is killed by $r$. This number is called the \emph{height} of \( \underline{M_A}:=(M_A, \varphi_{M_A}) \). An effective prismatic $F$-crystal is said to be \emph{minuscule} if  \( \underline{M_A} \) has height $\leq 1$ (i.e., $\mathrm{coker}(\varphi_{M_A})$ is killed by $I_A$) for all $\underline{A}$. In practice, given an effective  prismatic $F$-crystal, we shall only specify the data: $\varphi_{M_A}: \varphi^*M_A\to M_A$ before inverting $I_A$. 
	
	\begin{definition}[{\cite{GRPrisCris}, \cite{AnschutzLeBrasPrisDieud}}]  Let $\mathfrak{X}$  be a smooth $p$-adic formal ${\mathcal{O}}_K$-scheme. A \emph{prismatic Dieudonn\'e crystal (in vector bundles)} is an effective minuscule prismatic $F$-crystal over $\mathfrak{X}_{\prism}$. 
	\end{definition}
	
	Denote by \(\mathbf{Vect}^{\varphi}(\mathfrak{X}_{\prism})\) the category of prismatic \(F\)-crystals (in vector bundles) over \(\mathfrak{X}_{\prism}\), and by \(\mathbf{Vect}^{\mathrm{an}, \varphi}(\mathfrak{X}_{\prism})\) the category of \emph{analytic prismatic \(F\)-crystals (in vector bundles)} over \(\mathfrak{X}_{\prism}\). Here, the definition of an analytic prismatic \(F\)-crystal is just like that of a prismatic \(F\)-crystal with the following change: \(M_A\) is replaced by a vector bundle \(\sE_A\) over \(U(\underline{A})\) and \(\sE[1/I]\) is understood as the pullback to the affine open \(U(I)\).
	
	Alternatively, we can write an object in \(\mathbf{Vect}^{\varphi}(\mathfrak{X}_{\prism})\), resp. \(\mathbf{Vect}^{\mathrm{an}, \varphi}(\mathfrak{X}_{\prism})\) as a pair,
	\[
	\big(\sE, \varphi^*\sE[1/I_{\prism}] \cong \sE[1/I_{\prism}]\big), \text{ with } \sE \in \mathbf{Vect}(\mathfrak{X}_{\prism}), \text{ resp. } \sE \in \mathbf{Vect}^{\mathrm{an}}(\mathfrak{X}_{\prism}).
	\]
	Here \(\mathbf{Vect}(\mathfrak{X}_{\prism})\), resp. \(\mathbf{Vect}^{\mathrm{an}}(\mathfrak{X}_{\prism})\), denotes the category of \emph{prismatic crystals} resp. \emph{analytic prismatic crystals (in vector bundles)} over \(\mathfrak{X}_{\prism}\) whose objects can be described like those in \(\mathbf{Vect}^{\varphi}(\mathfrak{X}_{\prism})\), resp. \(\mathbf{Vect}^{\mathrm{an}, \varphi}(\mathfrak{X}_{\prism})\), with the data of Frobenius maps in \eqref{Eq: Frob of PirsCris} omitted.
	
	Each of the categories \(\mathbf{Vect}(\mathfrak{X})\), \(\mathbf{Vect}^{\varphi}(\mathfrak{X})\),  \(\mathbf{Vect}^{\mathrm{an}}(\mathfrak{X})\), and \(\mathbf{Vect}^{\mathrm{an}, \varphi}(\mathfrak{X})\) admits a natural \(\mathbb{Z}_p\)-linear \(\otimes\) structure where tensor products and exactness are defined termwise. 
	\begin{proposition}[{\cite[Proposition 3.7, 3.8]{GRPrisCris}}]\label{Prop: FullFaithfulRestr}
		The natural $\Zp$-linear $\otimes$ functor, 
		\[\mathbf{Vect}(\mathfrak{X}_{\prism})\to \mathbf{Vect}^{\mathrm{an}}(\mathfrak{X}_{\prism}), \]
		given by restriction is fully faithful. When $\mathfrak{X}=\Spf {\mathcal{O}}_K$, it induces an equivalence of categories. 
	\end{proposition}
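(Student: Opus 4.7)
The plan is to handle full faithfulness in general and the equivalence over $\Spf \mathcal{O}_K$ separately, both reducing to a Hartogs-type extension statement. The crucial input is a depth lemma: for any bounded prism $\underline{A} = (A, I)$, the ideal $(p, I) \subseteq A$ has depth $\geq 2$. Since $A$ is $p$-torsion free, $p$ is a nonzerodivisor; a generator of $I$ remains a nonzerodivisor modulo $p$ because $p \in I + \varphi(I)$ and $A/I$ has bounded $p^\infty$-torsion, so applying $\varphi$ translates the regularity of $p$ modulo an $I$-generator into the regularity of an $I$-generator modulo $p$. It follows that the complement $U(\underline{A}) = \mathrm{Spec}(A) \setminus V(p, I)$ enjoys the Hartogs property: for any finite projective $A$-module $M$, restriction gives an isomorphism $M \xrightarrow{\sim} \Gamma(U(\underline{A}), \widetilde{M}|_{U(\underline{A})})$, with vanishing $\mathrm{H}^1$.

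Granted the depth lemma, full faithfulness is formal. Given prismatic crystals $\sE_1, \sE_2 \in \mathbf{Vect}(\mathfrak{X}_{\prism}/T)$, the internal Hom evaluated at $\underline{A}$ is the finite projective $A$-module $\mathrm{Hom}_A(\sE_{1,A}, \sE_{2,A})$, whose restriction to $U(\underline{A})$ computes morphisms in the analytic category. The Hartogs property then says that a morphism between the analytic restrictions---which on each $\underline{A}$ is a compatible system of morphisms over $U(\underline{A})$---extends uniquely to a morphism of $A$-modules; compatibility with transition isomorphisms and with Frobenius is preserved by the uniqueness of extension.

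For the equivalence over $\mathfrak{X} = \Spf \mathcal{O}_K$, the strategy is to build a quasi-inverse via evaluation on the Breuil-Kisin prism $\underline{\mathfrak{S}} = (W[[t]], (E))$, which is a weakly final object in $(\Spf \mathcal{O}_K)_{\prism}$ in the sense that a crystal is determined by its value on $\underline{\mathfrak{S}}$ together with descent data along the \v{C}ech nerve $\underline{\mathfrak{S}}^{(\bullet)}$. Given an analytic prismatic crystal with value $\sE_{\mathfrak{S}}$ over $U(\underline{\mathfrak{S}})$, observe that $\mathfrak{S} = W[[t]]$ is a two-dimensional regular local ring and $V(p, E) = V(p, t)$ is its closed point, of codimension $2$. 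Auslander-Buchsbaum-Serre then yields a unique extension of $\sE_{\mathfrak{S}}$ to a vector bundle on $\mathrm{Spec}(\mathfrak{S})$. Applying the depth lemma to each self-product $\underline{\mathfrak{S}}^{(n)}$ propagates the descent isomorphisms from $U(\underline{\mathfrak{S}}^{(n)})$ to all of $\mathrm{Spec}(\mathfrak{S}^{(n)})$, producing a genuine prismatic crystal whose analytic restriction recovers the original datum.

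The main obstacle will be the depth lemma itself, specifically arranging the argument so that the bounded $p^\infty$-torsion of $A/I$ does not obstruct the regularity of an $I$-generator on $A/p$; one expects to exploit the Frobenius asymmetry $p \in I + \varphi(I)$ carefully here. A secondary technical point is controlling the self-products $\underline{\mathfrak{S}}^{(n)}$ in $(\Spf \mathcal{O}_K)_{\prism}$ well enough that the depth lemma and the regular-local-ring argument apply uniformly across the \v{C}ech nerve; this should follow from the explicit description of prismatic envelopes over $\mathcal{O}_K$ used in~\cite{BSPrism}.
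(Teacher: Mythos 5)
The paper does not give a proof of this proposition: it is quoted directly from Guo--Reinecke \cite[Propositions 3.7, 3.8]{GRPrisCris}. So there is no in-paper argument to compare against, and I will assess your attempt on its own terms.

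Your proposed depth lemma, as stated, is false. Take the crystalline prism $\underline{A}=(\mathbb{Z}_p,(p))$: then $(p,I)=(p)$, which has depth $1$ in $\mathbb{Z}_p$, and $U(\underline{A})=\operatorname{Spec}\mathbb{Q}_p$. Restriction of finite projective $\mathbb{Z}_p$-modules to $U(\underline{A})$ is just inverting $p$, which is neither full nor faithful on Hom-sets ($\mathrm{Hom}(\mathbb{Z}_p,\mathbb{Z}_p)=\mathbb{Z}_p\subsetneq\mathbb{Q}_p$). The same failure occurs for any prism with $p\in I$, and more generally for ramified $\mathcal{O}_K$-prisms $(\mathcal{O}_K,(\pi))$, where $\pi$ is nilpotent mod $p$. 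Your heuristic --- that $p\in I+\varphi(I)$ together with bounded $p^\infty$-torsion of $A/I$ lets you ``swap'' the regularity of $p$ mod $I$ into regularity of an $I$-generator mod $p$ --- does not go through; indeed neither hypothesis even guarantees $p$ is regular mod $I$. Consequently the claim that restriction from $\mathbf{Vect}(\mathfrak{X}_\prism/T)$ to $\mathbf{Vect}^{\mathrm{an}}(\mathfrak{X}_\prism/T)$ is fully faithful cannot be established by a prism-by-prism Hartogs argument: it is genuinely a statement about \emph{crystals on the whole site}, not about individual objects.

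The correct route, which is what Guo--Reinecke actually do, is to exploit the site structure. One reduces Zariski-locally to a small affine $\Spf R$, where the Breuil--Kisin prism $\underline{\mathfrak{S}_R}$ covers the final object of $\mathbf{Shv}((\Spf R)_\prism)$, so that both $\mathbf{Vect}$ and $\mathbf{Vect}^{\mathrm{an}}$ are computed as limits over the \v{C}ech nerve of this cover. The Hartogs/depth argument is then applied only to $\mathfrak{S}_R$ and the terms of its \v{C}ech nerve. For $\mathfrak{S}_R=R_0[[t]]$ itself the depth condition is immediate ($p$ and $E(t)\equiv t^e$ mod $p$ form a regular sequence). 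Verifying the analogous property for the self-products --- prismatic envelopes, which are not obviously Noetherian or regular --- is the genuinely technical content and is precisely the ``secondary technical point'' you flag but defer; in the actual proof it cannot be deferred, since your ``primary'' lemma is what is wrong. Your sketch of the equivalence over $\Spf\mathcal{O}_K$ via Auslander--Buchsbaum--Serre on the two-dimensional regular local ring $W[[t]]$ is a reasonable start and is in the spirit of the real argument, but it too leans on the general depth lemma to propagate descent data along the \v{C}ech nerve, so the gap persists there as well.
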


	\begin{theorem}\label{Thm: PrisFCrysAbeSchem}
		Let $\mathcal{X}$ be a smooth scheme of finite type over $W$ and $\sA \to \mathcal{X} $ an abelian scheme, with $\widehat{\sA} \to \widehat{\mathcal{X}} $ denoting its $p$-adic completion. Then the relative prismatic cohomology  \( \mathrm{H}^1_{\prism}(\widehat{\sA}/\widehat{\mathcal{X}})\) is a prismatic crystal (in vector bundles, by definition) of rank $2\dim(\sA/\mathcal{X})$, where $\dim(\sA/\mathcal{X})$ denotes the relative dimension of $\sA$ over $\mathcal{X}$. Moreover, the pair
		\[
		( \mathrm{H}^1_{\prism}(\widehat{\sA}/\widehat{\mathcal{X}}), \, \varphi:\mathrm{H}^1_{\prism}(\widehat{\sA}/\widehat{\mathcal{X}})\to \mathrm{H}^1_{\prism}(\widehat{\sA}/\widehat{\mathcal{X}})  ) \] 
		forms a prismatic Dieudonn\'e crystal over $\widehat{\mathcal{X}}_{\prism}$. 
	\end{theorem}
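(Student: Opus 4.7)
The plan is to reduce to local affines, evaluate on Breuil-Kisin prisms via the covering proposition just cited, and then invoke Hodge-Tate comparison together with the prismatic Dieudonné theory of Anschütz-Le Bras to control the $F$-structure. Since $\mathcal{X}$ is smooth of finite type over $W$, the $p$-adic completion $\widehat{\mathcal{X}}$ is a smooth formal $W$-scheme and hence admits a Zariski covering by small formal affines $\{\Spf R_i \to \widehat{\mathcal{X}}\}$. Choosing formal framings $\iota_i: W\langle t_1^\pm, \ldots, t_d^\pm\rangle \to R_i$, the attached Breuil-Kisin prisms $\underline{\mathfrak{S}_{R_i}}$ cover the final object of $\mathbf{Shv}(\widehat{\mathcal{X}}_{\prism})$. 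It therefore suffices, for each such $\underline{\mathfrak{S}_R}$, to establish: (i) the evaluation $\mathrm{H}^1_{\prism}(\widehat{\sA}/\widehat{\mathcal{X}})(\underline{\mathfrak{S}_R})$ is a finite projective $\mathfrak{S}_R$-module of rank $2\dim(\sA/\mathcal{X})$, (ii) base-change compatibility along morphisms $\underline{\mathfrak{S}_R} \to \underline{A}$ in the site, and (iii) the Frobenius is an isomorphism after inverting $E$, with cokernel annihilated by $E$ before inversion.

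For (i), I would use that the value on $\underline{\mathfrak{S}_R}$ computes the relative Breuil-Kisin prismatic cohomology $\mathrm{H}^1_{\prism}(\widehat{\sA}_R/\mathfrak{S}_R)$ of the pullback $\widehat{\sA}_R \to \Spf R$. The Hodge-Tate comparison of Bhatt-Scholze gives a canonical isomorphism of its reduction modulo $E$ with $\mathrm{H}^1_{\mathrm{dR}}(\sA_R/R)$, which is locally free of rank $2\dim(\sA/\mathcal{X})$ because $\sA_R/R$ is an abelian scheme. Since $\mathrm{H}^1_{\prism}(\widehat{\sA}_R/\mathfrak{S}_R)$ is derived $(p,E)$-complete and $\mathfrak{S}_R$ is $(p,E)$-adically Noetherian, a standard Nakayama-plus-completeness argument upgrades local freeness modulo $E$ to projectivity of the same rank over $\mathfrak{S}_R$. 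Part (ii) follows from the base change compatibility of absolute prismatic cohomology: a morphism of prisms in the site produces a morphism between the corresponding Breuil-Kisin computations.

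For (iii), the Frobenius on $\mathcal{O}_{\widehat{\mathcal{X}}_{\prism}}$ induces the linearized map $\varphi_M$; that it becomes an isomorphism after inverting $E$ follows from the general theory of prismatic $F$-crystals attached to smooth proper morphisms (equivalently, from recovering the $p$-adic étale cohomology of the generic fiber after inverting $E$). The effectiveness and minuscule condition, namely $\mathrm{coker}(\varphi_{M,0}) \cdot E = 0$, is precisely the output of prismatic Dieudonné theory: one identifies $\mathrm{H}^1_{\prism}(\widehat{\sA}/\widehat{\mathcal{X}})$ (up to the appropriate Cartier dualization or Frobenius twist) with the prismatic Dieudonné module of the $p$-divisible group $\widehat{\sA}[p^\infty]$ over $\widehat{\mathcal{X}}$, and invokes the theorem of Anschütz-Le Bras that such a Dieudonné module is a minuscule effective prismatic $F$-crystal.

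The principal obstacle is simultaneous control of these three pieces of data --- finite projectivity, base-change compatibility, and the minuscule Frobenius --- in a single coherent package functorial in the prism. Each ingredient is well-known in isolation, but they must be knit together so that the Breuil-Kisin data assemble to a genuine global object of $\mathbf{Vect}^{\varphi}(\widehat{\mathcal{X}}_{\prism})$ and not merely a compatible system of values. A secondary subtlety is choosing the correct normalization (dualization and Frobenius twist) in comparing $\mathrm{H}^1_{\prism}(\widehat{\sA}/\widehat{\mathcal{X}})$ with the Dieudonné module of $\widehat{\sA}[p^\infty]$, so that the height $\leq 1$ bound emerges with the right sign convention on cokernels.
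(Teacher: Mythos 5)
The paper's proof is a one-line citation to Ansch\"{u}tz--Le Bras, Theorem 4.62, which already delivers the full package (projectivity, rank, effectiveness, minuscule height) at the required generality; your sketch attempts to reconstruct the underlying argument but has two genuine gaps. First, the step from ``\(\mathrm{H}^1_{\prism}(\widehat{\sA}_R/\mathfrak{S}_R) \bmod E\) is locally free'' to ``\(\mathrm{H}^1_{\prism}\) is finite projective'' via ``Nakayama plus completeness'' is not valid as stated: the Hodge--Tate comparison is a statement about the full derived complex \(\mathbf{R}\Gamma_{\prism}\), not about individual cohomology groups, so to conclude that \(\mathrm{H}^1\) alone is projective and commutes with base change one must simultaneously control all cohomological degrees. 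For an abelian scheme this does work, because every de Rham \(\mathrm{H}^i\) is finite free of the expected rank, but that degeneration argument is precisely where the substance lies and you elide it. Second, the claim that the Frobenius is an isomorphism after inverting \(E\) ``follows from recovering the \(p\)-adic \'{e}tale cohomology of the generic fiber'' conflates Laurent \(F\)-crystals with prismatic \(F\)-crystals: the \'{e}tale realization functor produces isomorphisms over the \(p\)-completed localization \(A[1/I_{\prism}]^{\wedge}_p\), whereas the prismatic \(F\)-crystal condition demands an isomorphism already over \(A[1/E]\) before \(p\)-completion, which is strictly stronger and is itself a substantive output of prismatic Dieudonn\'{e} theory, not a formal consequence of \'{e}tale comparison.

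Since you end up invoking Ansch\"{u}tz--Le Bras anyway for effectiveness and the minuscule bound, the intermediate reconstruction is effectively subsumed by that citation; the cleanest and unconditionally correct route is the paper's, namely to apply Theorem 4.62 directly, as it is already formulated at exactly the generality needed here.
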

	\begin{proof}
		This follows from \cite[Theorem 4.62]{AnschutzLeBrasPrisDieud}. 
	\end{proof}

	\subsection{Etale Realization Functors}

	Let \(\mathfrak{X}\) be a smooth \(p\)-adic formal \({\mathcal{O}}_K\)-scheme, and denote its rigid generic fibre by
	\(
	X := \mathfrak{X} \times_{\Spf {\mathcal{O}}_K} \mathrm{Spa} (K, {\mathcal{O}}_K).
	\)
	Denote by \(\mathbf{Loc}_{\Zp}(X)\) the category of \'etale \(\Zp\) local systems on \(X\), and by \(\mathbf{Vect}^{\varphi}(\mathfrak{X}_{\prism})[1/I_{\prism}]_p^{\wedge}\) the category of \emph{prismatic Laurent \(F\)-crystals} \cite[Definition 3.2]{BSPrisCrys}, whose objects consist of pairs \((\mathcal{L}, \varphi)\), with \(\mathcal{L}\) being an object in \(\mathbf{Vect}(\mathfrak{X}_{\prism})[1/I_{\prism}]_p^{\wedge}\) and \(\varphi_{\mathcal{L}}: \varphi^* \mathcal{L} \cong \mathcal{L}\) an isomorphism in \(\mathbf{Vect}(\mathfrak{X}_{\prism})[1/I_{\prism}]_p^{\wedge}\). Here, \(\varphi: \mathcal{O}_{\mathfrak{X}_{\prism}}[1/I_{\prism}] \to \mathcal{O}_{\mathfrak{X}_{\prism}}[1/I_{\prism}]\) is the Frobenius map induced by \(\varphi: \mathcal{O}_{\mathfrak{X}_{\prism}} \to \mathcal{O}_{\mathfrak{X}_{\prism}}\), and the category \(\mathbf{Vect}(\mathfrak{X}_{\prism})[1/I_{\prism}]_p^{\wedge}\), known as \emph{prismatic Laurent crystals}, is defined similarly to \(\mathbf{Vect}(\mathfrak{X}_{\prism})\), with the following change: for each prism \(\underline{A} = (A, I)\), the \(A\)-module \(M_A\) is replaced by an \(A[1/I_{\prism}]\)-module.  Both \(\mathbf{Vect}^{\varphi}(\mathfrak{X}_{\prism})[1/I_{\prism}]_p^{\wedge}\)  and \(\mathbf{Vect}(\mathfrak{X}_{\prism})[1/I_{\prism}]_p^{\wedge}\) admit the structure of an exact $\Zp$-linear $\otimes$-category. 
	
	Bhatt and Scholze establish in \cite[Corollary 3.8]{BSPrisCrys} a category equivalence: 
	\[
	\mathrm{T}_{\mathrm{\text{\'e}t}}: \mathbf{Vect}^{\varphi}(\mathfrak{X}_{\prism})[1/I_{\prism}]_p^{\wedge} \cong \mathbf{Loc}_{\Zp}(X),
	\]
	which is called an \emph{\'etale realization functor} in \cite{IKYTannak}. This functor is a bi-exact \(\Zp\)-linear \(\otimes\)-equivalence.
	
	Denote by \(\mathbf{Loc}_{\Zp}^{\cris}(X)\) the full subcategory of crystalline \(\Zp\) local systems on \(X\), and consider the (exact \(\Zp\)-linear \(\otimes\)) \'etale realization functor:
	\begin{equation}\label{Eq: EtaleRealFun1}
		\mathrm{T}_{\mathrm{\text{\'e}t}}: \mathbf{Vect}^{\mathrm{an}, \varphi}(\mathfrak{X}_{\prism}) \xrightarrow{\mathrm{restriction}} \mathbf{Vect}^{\varphi}(\mathfrak{X}_{\prism})[1/I_{\prism}]_p^{\wedge} \cong \mathbf{Loc}_{\Zp}(X).
	\end{equation}
	
	\begin{theorem}\label{Thm: BiExEquiv}
		Let \(\mathfrak{X}\) be a smooth \(p\)-adic formal scheme over \({\mathcal{O}}_K\). Then the \'etale realization functor in \eqref{Eq: EtaleRealFun1} induces a bi-exact \(\Zp\)-linear \(\otimes\)-equivalence:
		\begin{equation}\label{Eq: EtaleRealFun2}
			\mathrm{T}_{\mathrm{\text{\'e}t}}: \mathbf{Vect}^{\mathrm{an}, \varphi}(\mathfrak{X}_{\prism}) \to \mathbf{Loc}_{\Zp}^{\cris}(X).
		\end{equation}
		In particular, for each reductive group \(\sG\) over \(\Zp\), it induces a bi-exact \(\Zp\)-linear \(\otimes\)-equivalence:
		\[
		\sG\text{-}\mathbf{Vect}^{\mathrm{an}, \varphi}(\mathfrak{X}_{\prism}) \cong \sG\text{-}\mathbf{Loc}_{\Zp}^{\cris}(X).
		\]
	\end{theorem}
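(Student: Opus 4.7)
The plan is to leverage the already-established Bhatt--Scholze equivalence $\mathrm{T}_{\mathrm{\text{\'e}t}}: \mathbf{Vect}^{\varphi}(\mathfrak{X}_{\prism})[1/I_{\prism}]_p^{\wedge} \cong \mathbf{Loc}_{\Zp}(X)$ and identify, via the restriction functor appearing in~\eqref{Eq: EtaleRealFun1}, the essential image of $\mathbf{Vect}^{\mathrm{an}, \varphi}(\mathfrak{X}_{\prism})$ in $\mathbf{Loc}_{\Zp}(X)$ with precisely $\mathbf{Loc}_{\Zp}^{\cris}(X)$. This is essentially the main theorem of \cite{DLMSPrisCris} (reformulated in \cite{GRPrisCris}), specialized to the small formal scheme case treated here. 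My plan splits into three steps: full faithfulness, identification of the essential image, and the Tannakian upgrade to $\mathcal{G}$-objects.

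For full faithfulness, I would first verify that the restriction functor $\mathbf{Vect}^{\mathrm{an}, \varphi}(\mathfrak{X}_{\prism}) \to \mathbf{Vect}^{\varphi}(\mathfrak{X}_{\prism})[1/I_{\prism}]_p^{\wedge}$ is fully faithful. This is the Frobenius-equivariant analogue of Proposition~\ref{Prop: FullFaithfulRestr}: any Frobenius-equivariant morphism of analytic prismatic crystals is already determined by its restriction to the Laurent locus $U(I)$, because repeated Frobenius pullback sweeps out $U(\underline{A})$ from $U(I)$ (using $p \in I + \varphi(I)$). Composing with the Bhatt--Scholze equivalence then gives the full faithfulness of \eqref{Eq: EtaleRealFun1} viewed as a functor into $\mathbf{Loc}_{\Zp}(X)$.

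Next, for the identification of the essential image with $\mathbf{Loc}_{\Zp}^{\cris}(X)$: one direction, that an analytic prismatic $F$-crystal yields a crystalline local system, I would verify by evaluating on a Breuil--Kisin prism $\underline{\mathfrak{S}_{R_i}}$ attached to a small formal cover $\{ \Spf R_i \to \mathfrak{X} \}$ as in \S\ref{S: BKPrism}; the resulting Breuil--Kisin-type datum produces a crystalline representation by the classical theory over $\Spf \mathcal{O}_K$ and its relative generalization. The converse direction --- that every crystalline local system arises this way --- is the substantive content of DLMS: starting from a crystalline local system one produces a Breuil--Kisin module on each $\Spf R_i$ (via associated Breuil--Kisin--Fargues modules and descent) and then glues these along the covering to an object of $\mathbf{Vect}^{\mathrm{an}, \varphi}(\mathfrak{X}_{\prism})$, crucially using that one only needs compatible data on the analytic loci $U(\underline{A})$ rather than on all of $\Spec A$. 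Bi-exactness is then inherited from the bi-exact Bhatt--Scholze equivalence, since exactness of a morphism in $\mathbf{Vect}^{\mathrm{an}, \varphi}(\mathfrak{X}_{\prism})$ may be tested on evaluations on a covering family.

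Finally, the equivalence $\sG\text{-}\mathbf{Vect}^{\mathrm{an}, \varphi}(\mathfrak{X}_{\prism}) \cong \sG\text{-}\mathbf{Loc}_{\Zp}^{\cris}(X)$ is formal once the bi-exact $\Zp$-linear tensor equivalence $\mathrm{T}_{\mathrm{\text{\'e}t}}$ is in hand: by the Tannakian convention set in the notations subsection, a $\sG$-object in either side is a $\Zp$-linear exact tensor functor $\mathbf{Rep}_{\Zp}(\sG) \to \mathcal{C}$, and post-composition with $\mathrm{T}_{\mathrm{\text{\'e}t}}$ induces the claimed equivalence. The genuine obstacle in the proof is squarely the essential surjectivity onto $\mathbf{Loc}_{\Zp}^{\cris}(X)$, which requires the relative Breuil--Kisin descent/gluing machinery of \cite{DLMSPrisCris}; all remaining steps --- full faithfulness, bi-exactness, and the Tannakian upgrade --- are standard or purely formal.
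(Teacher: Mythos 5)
The paper's own proof is a two-line citation: it invokes \cite[\S 4]{GRPrisCris} (see also \cite{DLMSPrisCris}) for the categorical equivalence and \cite[Proposition 2.22]{IKYTannak} for the bi-exactness, with the $\sG$-upgrade being the formal Tannakian consequence. Your proposal correctly recognizes that this is what is going on, and then reconstructs a sketch of the arguments behind those cited results --- full faithfulness via the Laurent restriction, essential surjectivity via relative Breuil--Kisin descent, and the $\sG$-upgrade by post-composition --- so at the level of content you are on the same track as the paper.

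One point deserves attention. Your stated argument for bi-exactness (``since exactness of a morphism in $\mathbf{Vect}^{\mathrm{an}, \varphi}(\mathfrak{X}_{\prism})$ may be tested on evaluations on a covering family'') does not actually close the gap. Reducing to a covering family only makes the problem per-prism; the real content is that exactness of a complex of vector bundles on the Laurent locus $U(I)$ forces exactness on the larger analytic locus $U(\underline{A})$, which is exactly where the quasi-inverse of \eqref{Eq: EtaleRealFun3} (the non-analytic version) \emph{fails} to be exact, as the paper emphasizes just below the statement. The fix is the same Frobenius-sweeping mechanism you already invoked for full faithfulness: $p \in I + \varphi(I)$ implies $\varphi$ carries the ``extra'' points $V(I)\cap U(p)$ into $U(I)$, so the Frobenius isomorphism $\varphi^*\sE[1/I] \cong \sE[1/I]$ transports exactness from $U(I)$ to all of $U(\underline{A})$. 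You should cite or reproduce that argument explicitly in the bi-exactness step rather than gesturing at covers; as it stands that paragraph is the one place where your sketch is incomplete.
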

	
	\begin{proof}
		The assertion that \(\mathrm{T}_{\mathrm{\text{\'e}t}}\) is a category equivalence is proved in \cite[\S 4]{GRPrisCris} (see also \cite{DLMSPrisCris}), and the assertion that \(\mathrm{T}_{\mathrm{\text{\'e}t}}\) is bi-exact is proved in \cite[Proposition 2.22]{IKYTannak}.
	\end{proof}
	
	Let \(\mathbf{Loc}_{\Zp}^{\prism-\mathrm{gr}}(X) := \mathrm{T}_{\mathrm{\text{\'e}t}}(\mathbf{Vect}^{\varphi}(\mathfrak{X}_{\prism}))\) be the essential image of \(\mathbf{Vect}^{\varphi}(\mathfrak{X}_{\prism})\) under the \'etale realization functor \eqref{Eq: EtaleRealFun2}. It is a full exact \(\Zp\)-linear \(\otimes\)-subcategory of \(\mathbf{Loc}_{\Zp}^{\cris}(X)\), whose objects are called \emph{prismatically good reduction \(\Zp\) local systems} \cite[Definition 2.27]{IKYTannak}. Clearly, \(\mathrm{T}_{\mathrm{\text{\'e}t}}\) induces the following \emph{exact} \(\Zp\)-linear \(\otimes\)-equivalence:
	\begin{equation}\label{Eq: EtaleRealFun3}
		\mathrm{T}_{\mathrm{\text{\'e}t}}: \mathbf{Vect}^{\varphi}(\mathfrak{X}_{\prism}) \cong \mathbf{Loc}_{\Zp}^{\prism-\mathrm{gr}}(X).
	\end{equation}
	This, in turn, induces a faithful functor between \(\sG\)-objects for each reductive \(\Zp\)-group scheme \(\sG\):
	\begin{equation}\label{Eq: EtaleRealFun4}
		\mathrm{T}_{\mathrm{\text{\'e}t}}: \sG\text{-}\mathbf{Vect}^{\varphi}(\mathfrak{X}_{\prism}) \to \sG\text{-}\mathbf{Loc}_{\Zp}^{\prism-\mathrm{gr}}(X).
	\end{equation}
	
	Unlike the situation in Theorem \ref{Thm: BiExEquiv}, it is not a priori clear that \eqref{Eq: EtaleRealFun4} is an equivalence of categories due to the following reason: the category equivalence in \eqref{Eq: EtaleRealFun3} is not \emph{bi-exact} (albeit being \emph{exact}) because the quasi-inverse of this category equivalence is not exact. In fact, this is already the case for \(\mathfrak{X} = \Spf {\mathcal{O}}_K\). Indeed, though in this special case, the \(\Zp\)-linear \(\otimes\)-functor \(\mathbf{Vect}^{\varphi}(\Spf {\mathcal{O}}_K) \to \mathbf{Vect}^{\varphi, \mathrm{an}}(\Spf {\mathcal{O}}_K)\) is an equivalence of categories, its quasi-inverse is not exact. However, the authors in \cite{IKYTannak} manage to show that \eqref{Eq: EtaleRealFun4} is still an equivalence of categories.
	
	\begin{theorem}[{\cite[Theorem 2.28]{IKYTannak}}] \label{Thm: GMainEquiv}
		Let $\mathfrak{X}$ be a smooth $p$-adic formal scheme over ${\mathcal{O}}_K$ and $\sG$ a reductive group scheme over $\Zp$. 
		The faithful \'etale realization functor in \eqref{Eq: EtaleRealFun4} is an equivalence of categories:
		\begin{equation*}\label{Eq: EtaleRealFun5}
			\mathrm{T}_{\mathrm{\text{\'e}t}}: \sG\text{-}\mathbf{Vect}^{\varphi}(\mathfrak{X}_{\prism}) \cong \sG\text{-}\mathbf{Loc}_{\Zp}^{\prism-\mathrm{gr}}(X).
		\end{equation*}
	\end{theorem}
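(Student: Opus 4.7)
The plan is to bootstrap on the bi-exact tensor equivalence of Theorem \ref{Thm: BiExEquiv} on the analytic side, combined with the full faithfulness of the restriction functor $\mathbf{Vect}^{\varphi}(\mathfrak{X}_{\prism}) \hookrightarrow \mathbf{Vect}^{\mathrm{an}, \varphi}(\mathfrak{X}_{\prism})$, which follows from Proposition \ref{Prop: FullFaithfulRestr} together with the fact that commuting with a prescribed Frobenius is a closed condition on morphisms. The non-bi-exactness of \eqref{Eq: EtaleRealFun3} never obstructs the argument: the $\sG$-structure is first transferred through the analytic world, where bi-exactness is available, and only at the end is it descended via full faithfulness, which requires no exactness of a quasi-inverse.

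For essential surjectivity, start with a $\Zp$-linear tensor functor $\omega: \mathbf{Rep}_{\Zp}(\sG) \to \mathbf{Loc}_{\Zp}^{\prism-\mathrm{gr}}(X)$. Composing with the inclusion into $\mathbf{Loc}_{\Zp}^{\cris}(X)$ and applying the $\sG$-level equivalence furnished by Theorem \ref{Thm: BiExEquiv} yields a tensor functor $\tilde{\omega}^{\mathrm{an}}: \mathbf{Rep}_{\Zp}(\sG) \to \mathbf{Vect}^{\mathrm{an}, \varphi}(\mathfrak{X}_{\prism})$ with $\mathrm{T}_{\mathrm{\text{\'e}t}} \circ \tilde{\omega}^{\mathrm{an}} \cong \omega$ as tensor functors. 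For each $V \in \mathbf{Rep}_{\Zp}(\sG)$, the hypothesis that $\omega(V) \in \mathbf{Loc}_{\Zp}^{\prism-\mathrm{gr}}(X)$ means precisely that $\tilde{\omega}^{\mathrm{an}}(V)$ lies in the essential image of the restriction embedding $\mathbf{Vect}^{\varphi}(\mathfrak{X}_{\prism}) \hookrightarrow \mathbf{Vect}^{\mathrm{an}, \varphi}(\mathfrak{X}_{\prism})$; choose a preimage $\tilde{\omega}(V)$. Full faithfulness of this embedding then transports the functoriality data, the tensor constraints $\tilde{\omega}^{\mathrm{an}}(V) \otimes \tilde{\omega}^{\mathrm{an}}(V') \xrightarrow{\sim} \tilde{\omega}^{\mathrm{an}}(V \otimes V')$, and the unit constraint of $\tilde{\omega}^{\mathrm{an}}$ uniquely to $\mathbf{Vect}^{\varphi}(\mathfrak{X}_{\prism})$, assembling into the desired $\sG$-object $\tilde{\omega}$ with $\mathrm{T}_{\mathrm{\text{\'e}t}}(\tilde{\omega}) \cong \omega$.

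For full faithfulness of $\mathrm{T}_{\mathrm{\text{\'e}t}}$ at the $\sG$-level, recall that a morphism of $\sG$-objects is a monoidal natural transformation of tensor functors. Given $\tilde{\omega}_1, \tilde{\omega}_2 \in \sG\text{-}\mathbf{Vect}^{\varphi}(\mathfrak{X}_{\prism})$ and a morphism $\alpha$ between their images under $\mathrm{T}_{\mathrm{\text{\'e}t}}$, Theorem \ref{Thm: BiExEquiv} produces a unique lift $\tilde{\alpha}^{\mathrm{an}}$ on the analytic side, whose components then lift uniquely to $\mathbf{Vect}^{\varphi}(\mathfrak{X}_{\prism})$ by full faithfulness of the restriction embedding; the uniqueness guarantees that these components reassemble into a monoidal natural transformation $\tilde{\alpha}$. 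I expect the main obstacle to be the careful verification that the restriction functor $\mathbf{Vect}^{\varphi}(\mathfrak{X}_{\prism}) \to \mathbf{Vect}^{\mathrm{an}, \varphi}(\mathfrak{X}_{\prism})$ is genuinely fully faithful (upgrading Proposition \ref{Prop: FullFaithfulRestr} to the Frobenius-equivariant setting while tracking the $[1/I_{\prism}]$-level identifications). A conceptual point worth emphasizing is that the assumption that $\omega$ factors through $\mathbf{Loc}_{\Zp}^{\prism-\mathrm{gr}}(X)$ for \emph{every} $V$—rather than only for a chosen faithful representation—is exactly what makes each $\tilde{\omega}^{\mathrm{an}}(V)$ descend, thereby sidestepping any need for a separate Tannakian argument invoking a defining system of tensors cutting out $\sG$.
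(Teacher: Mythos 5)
Your argument captures one of the two ideas the paper uses — transporting the tensors through the bi-exact analytic equivalence of Theorem~\ref{Thm: BiExEquiv} and descending along the fully faithful restriction of Proposition~\ref{Prop: FullFaithfulRestr} — but it stops precisely where the real work begins, and the concluding remark that you have ``sidestepped any need for a separate Tannakian argument'' is the error.

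After choosing preimages $\tilde{\omega}(V)$ for each $V$ and lifting the structure maps by full faithfulness, you do obtain a $\Zp$-linear $\otimes$-functor $\tilde{\omega}\colon \mathbf{Rep}_{\Zp}(\sG)\to \mathbf{Vect}^{\varphi}(\mathfrak{X}_{\prism})$, but nothing in the construction shows that $\tilde{\omega}$ is an \emph{exact} tensor functor, i.e.\ a genuine $\sG$-object in the sense needed for the three-way equivalence \eqref{Eq: ThreeEquivalences} with $\mathbf{Tor}_{\mathcal{G}}^{\varphi}(\mathfrak{X}_{\prism})$. Exactness is not inherited from $\tilde{\omega}^{\mathrm{an}}$: a map of prismatic $F$-crystals whose restriction to $\mathbf{Vect}^{\mathrm{an},\varphi}$ is a surjection can fail to be surjective, because the analytic locus $U(\underline{A})$ omits $V(p, I)$. (Over $\mathfrak{S}_R = R_0[[t]]$, the map $\mathfrak{S}_R^{\oplus 2}\xrightarrow{(p,\,E)}\mathfrak{S}_R$ is surjective on $U(\underline{\mathfrak{S}_R})$ but not over $\mathfrak{S}_R$.) This is exactly the obstruction the paper flags in the paragraph preceding the theorem: \eqref{Eq: EtaleRealFun3} is exact but not bi-exact, so pulling an exact $\otimes$-functor backward does not automatically yield one. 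Your observation that the hypothesis applies to \emph{every} $V$ only addresses essential surjectivity on objects, which was never the difficulty; it does not touch exactness.

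The paper's proof translates the needed verification into the Claim that the sheaf $\mathcal{Q}_{\omega(\Lambda_0)} = \underline{\mathrm{Isom}}\bigl((\Lambda_0\otimes_{\Zp}\mathcal{O}_{\prism},\mathrm{T}_0),(\sE_0,\mathrm{T}_{\prism})\bigr)$ is a $\sG$-torsor, and that is the true content. It is established by a purity argument over each Breuil--Kisin prism $\underline{\mathfrak{S}_R}$: Theorem~\ref{Thm: BiExEquiv} provides the torsor property over $U(\underline{\mathfrak{S}_R}) = \mathrm{Spec}\,\mathfrak{S}_R\setminus V(p,E)$, which contains all codimension-$1$ points, and then one extends across the codimension-$2$ locus using that $\mathfrak{M}_R$ is finite locally free together with \cite[A.26, Remark A.27]{IKYTannak}. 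That geometric extension step has no counterpart in your proposal; without it, you have not produced an object of $\sG\text{-}\mathbf{Vect}^{\varphi}(\mathfrak{X}_{\prism})$.
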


	\section{Local Systems on Shimura Varieties and prismatic $\mathcal{G}$-torsors}\label{S:ShVLoc}\label{LocSys+PrisTorsonShimuraVar}
	In this section, we introduce the prismatic $\mathcal{G}$\nobreakdash-torsor $\mathbb{J}_{\prism}$ over the integral model $\mathcal{S}$ of a Shimura variety. In the first two subsections, we recall facts on Shimura varieties to fix data and notation. We then discuss $\mathbb{J}_{\prism}$ in the remaining two subsections.
	\subsection{Integral models of Shimura Varieties of Hodge Type}\label{S: ShimVar}
	Our basic setup of Shimura varieties is the same as \cite[\S 7]{Yan23zip} and \cite[\S 3]{YanLocCon}. Specifically, we consider a Shimura variety $\mathrm{Sh}_{\mathsf{K}}=\mathrm{Sh}_{\mathsf{K}}(\mathbf{G}, \mathbf{X})$  attached to a Shimura datum $\big(\mathbf{G}, \mathbf{X})$ of Hodge type, where $\mathsf{K}=\mathsf{K}_0\mathsf{K}^p$, with $\mathsf{K}_0\subseteq \mathbf{G}(\mathbb{Q}_p)$, $\mathsf{K}^p\subseteq \mathbf{G}(\mathbb{A}_f^p)$ open compact subgroups. It is a smooth quasi-projective variety over a number field (a.k.a. the reflective field) $E=E(\mathbf{G}, \mathbf{X})$. Here that $\mathrm{Sh}_{\mathsf{K}}$ is of Hodge type means that it can be embedding into some Siegel Shimura variety; we will fix such an embedding shortly in \eqref{Eq:EmbedShiDat} below. Assume that the level structure $\mathsf{K}$ is hyperspecial at $p$. This means that we have $\mathsf{K}_0=\mathcal{G}(\Zp)$ for some reductive model $\mathcal{G}$ over $\Zp$ of $\mathbf{G}_{\mathbb{Q}_p}$. Fix a place $v$ of $E$ above $p$. Then the hyperspecial condition also implies that $E_v$ is unramified over $\mathbb{Q}_p$ and hence we have $\mathcal{O}_{E_v}= W$, where $W=W(\kappa)$ and $\kappa$ denotes the residue field $\mathcal{O}_{E_v}/p\mathcal{O}_{E_v}$. Let $\mathcal{S}=\mathcal{S}_{\mathsf{K}}$ be the Kisin-Vasiu integral model over $W$ of $\mathrm{Sh}_{\mathsf{K}}$ \cite{KisinIntegralModels}, and write $S=S_{\mathsf{K}}:=\mathcal{S}\otimes_{W}\kappa$ for the special fiber of $\mathcal{S}$. 
	
	We provide a more detailed description of the integral model $\mathcal{S}$ to fix data and notation. For this, we fix a morphism of unramified Shimura data as defined in \cite[\S 4.1]{IKYPrisReal},
	\begin{equation}\label{Eq:EmbedShiDat}
		\iota: (\mathbf{G}, \mathbf{X}, \sG) \to (\mathbf{GSp}(V_0), \mathbb{H}_{\mathrm{g}}^{\pm}, \mathbf{GSp}(\Lambda_0)),
	\end{equation}
	along with neat open compact subgroups,
	\[
	\mathsf{K} = \mathsf{K}_0\mathsf{K}^p \subseteq \sG(\mathbb{A}_f), \quad \mathsf{L} = \mathsf{L}_0\mathsf{L}^p \subseteq \mathbf{GSp}(V_0)(\mathbb{A}_f), \text{ where }
	\]
	\[
	\mathsf{K}_0 := \sG(\Zp), \quad \mathsf{L}_0 := \mathbf{GSp}(\Lambda_0)(\Zp), \quad \mathsf{K}^p \subseteq \sG(\mathbb{A}_f^p), \text{ and} \quad \mathsf{L}^p \subseteq \mathbf{GSp}(V_0)(\mathbb{A}_f^p).
	\]
	satisfying \(\iota(\mathsf{K}^p) \subseteq \mathsf{L}^p\). Then \(\mathrm{Sh}_{\mathsf{K}}\) resp. \(\mathrm{Sh}_{\mathsf{L}}\)  is a quasi-projective \(E\)-scheme resp.  \(\mathbb{Q}\)-scheme.
	
	For the Siegel Shimura datum \((\mathbf{GSp}(V_0), \mathbb{H}_{\mathrm{g}}^{\pm})\) and \(\mathsf{L}\) as specified above, its integral model \(\mathcal{S}_{\mathsf{L}}\) admits a moduli interpretation as a moduli space of principally polarized abelian varieties with additional structures; see, for example, \cite[Corollary 3.8]{MoonenInteModel} for more details. We denote by \(\mathcal{A}=\mathcal{A}_{\mathsf{L}}\) the universal abelian scheme over \(\mathcal{S}_{\mathsf{L}}\).
	The embedding of unramified Shimura data \(\iota\) induces a finite morphism of \(\mathcal{O}_{E_{v}}\)-schemes,
	\(
	\iota: \mathcal{S}_{\mathsf{K}} \to \mathcal{S}_{\mathsf{L}}.
	\)
	Denote by \(\mathcal{A}=\mathcal{A}_{\mathsf{K}}\) the pullback to \(\mathcal{S}_{\mathsf{K}}\) of  \(\mathcal{A}_{\mathsf{L}}\).
	\subsection{Hodge cocharacters}\label{S: HodgeCoch}
	Write $G=\mathcal{G}\otimes_{\Zp}\mathbb{F}_p$ for the special fiber of $\mathcal{G}$ and still denote by $G$ its base change to $\kappa$. Let $\mu: \mathbb{G}_{m, \kappa}\to G_{\kappa}=G$ be a representative of the reduction  $[\mu]_{\kappa}$ over $\kappa$ of the $\mathbf{G}(\mathbb{C})$-conjugacy class $[\mu]_{\mathbb{C}}$ determined by $(\mathbf{G}, \mathbf{X})$. Denote by $ P_{\pm}\subseteq G$ the opposite parabolic subgroups of $G$ defined by $ \mu $ and  $U_{\pm}\subseteq P_{\pm}$ the corresponding unipotent radicals; see for example \cite[\S 3.4]{YanLocCon} for the normalization.  
	
	We also fix a cocharacter $\tilde{\mu}: \mathbb{G}_{m, W}\to \sG_{W}=\sG$ over $W$, which lifts the character $\mu: \mathbb{G}_{m, \kappa}\to G$. Such a cocharacter, coming from the Shimura datum, is necessarily minuscule, in the sense that the weight decomposition $\mathbf{Lie}\sG=\oplus_{i\in \mathbb{Z}}(\mathbf{Lie}\sG)_i$ induced by $\tilde{\mu}$ is such that $(\mathbf{Lie}\sG)_i=0$ for all $i\geq 2$. 
	
	\subsection{Etale realization functor for {$\mathrm{Sh}_{\mathsf{K}}$}}\label{S:PrisTor}
	In this subsection, we record the \'etale realization functor for our Shimura variety \(\mathrm{Sh}_{\mathsf{K}}\) introduced above, following \cite[\S 2.3]{IKYPrisReal}. Note that in loc. cit. Shimura varieties of more general type and with more general level structures are allowed. In contrast, we are in the case where $\mathbf{G}=\mathbf{G}^c$ and $\sG$ is reductive over $\Zp$. 
	
	We retain the setting as in the previous subsection. As our $\K^p$ is fixed, we simply write $\sS$ for $\sS_{\K}$ when no confusions shall arise. For our fixed neat compact open subgroup \(\K=\K_0\K^p\subseteq \mathbf{G}(\mathbb{A}_p) \), 
	the map
	\[
	\mathrm{Sh}_{\K^p}:=\varprojlim_{\K_p'\subseteq \K_0}\mathrm{Sh}_{\K_p'\K^p} \to \mathrm{Sh}_{\K},
	\]
	viewed as a morphism in the pro\'etale site \((\mathrm{Sh}_{\K})_{\mathrm{{pro\text{\'e}t}}}\),  is a $\K_0=\sG(\Zp)$-torsor\footnote{Here \(\sG(\Zp)=\varprojlim_{i}\sG(\Zp/p^i\Zp)\) is a profinite group.}. This $\sG(\Zp)$-torsor induces the following \emph{integral \'etale realization functor}, 
	\begin{equation}\label{Eq: EtalRealShimVar}
		\omega_{\K, \mathrm{\text{\'e}t}}:  \mathbf{Rep}_{\Zp}(\sG)\to \mathbf{Loc}_{\Zp}(\mathrm{Sh}_{\K}). 
	\end{equation}
	It is given by sending an object $\rho: \sG\to \mathrm{GL}_{\Zp}(\Lambda)$ to the push forward of the $\K_0$-torsor $\mathrm{Sh}_{\K_0}$ along $\rho$ (viewed as a homomorphism of reductive $\Zp$-group schemes), i.e., 
	\[
	\omega_{\K, \mathrm{\text{\'e}t}}(\rho):=\omega_{\K, \mathrm{\text{\'e}t}}(\Lambda): = \mathrm{Sh}_{\K^p}\times^{\sG, \rho}\mathrm{GL}_{\Zp}(\Lambda)=\mathrm{Sh}_{\K^p}\times^{\sG, \rho}\Lambda,
	\]
	where the contracted product is the usual construction. The functor $\omega_{\K, \mathrm{\text{\'e}t}}$ has nice functorial properties. For example, given a morphism of unramified Shimura data, $\delta: (\mathbf{G}, \mathbf{X}, \sG)\to (\mathbf{G}', \mathbf{X}', \sG')$, together with neat compact open subgroups, $\K=\K_0\K^p\subseteq \mathbf{G}(\mathbb{A}_f)$ and $\K'=\K'_0\K'^p\subseteq \mathbf{G}'(\mathbb{A}_f)$ such that $\delta(\K)\subseteq \K'$, we have the following commutative diagram.
	\begin{equation}\label{Eq: EtalRealFunctorial}
		\xymatrix{
			\mathbf{Rep}_{\Zp}(\sG')\ar[r]^{\delta^*}\ar[d]^{\omega_{\K, \mathrm{\text{\'e}t}}}&\mathbf{Rep}_{\Zp}(\sG)\ar[d]^{\omega_{\K', \mathrm{\text{\'e}t}}}\\
			\mathbf{Loc}_{\Zp}(\mathrm{Sh}_{\K'})\ar[r]^{\delta^*}&\mathbf{Loc}_{\Zp}(\mathrm{Sh}_{\K}).
		}
	\end{equation}
	Here note that the reflex field $E'$ of $(\mathbf{G}', \mathbf{X}')$ is necessarily a subfield of $E$, hence the bottom pull-back functor is well-understood.

	Write $\Lambda_0\in \mathbf{Rep}_{\Zp}\mathcal{G}$ for the faithful $\Zp$-representation $\mathcal{G}\hookrightarrow \mathrm{GL}(\Lambda)$ induced by the fixed embedding $\iota: \mathcal{G}\to \mathbf{GSp}(\Lambda_0) $, and $\Lambda_0^*$ for its contragredient dual. Let $\mathrm{T}_0\subseteq \Lambda_0^{\otimes}$ be the set of tensors cutting out $\mathcal{G}$ from $\mathrm{GL}(\Lambda_0^*)$(\S~\ref{Notation}). Write $\mathrm{H}^1_{\mathbb{Q}_p}(\sA/ \mathrm{Sh}_{\K}): = \mathrm{R}^1\pi_{*}\mathbb{Q}_p$ and $\mathrm{H}^1_{\mathbb{Z}_p}(\sA/ \mathrm{Sh}_{\K}): = \mathrm{R}^1\pi_{*}\mathbb{Z}_p$, with $\pi: \mathcal{A}\to \mathrm{Sh}_{\K}$ the structure morphism.  Let $\mathrm{T}_{0, p, \mathrm{\text{\'e}t}}\subseteq \mathrm{H}^1_{\mathbb{Q}_p}(\sA/ \mathrm{Sh}_{\K})^{\otimes}$ be the set of tensors constructed in \cite[Lemma 3.1.6]{KimRapoportUniformisation} (taking the $p$-component of $t_{\text{\'et}}^{\mathrm{univ}}$ therein) from $\mathrm{T}_{0}\otimes 1\subseteq V_0^{\otimes}$. 
	
	The following proposition is extracted from \cite[\S 2.3]{IKYPrisReal}. 
	\begin{proposition}\label{Prop: TrivEtalTors}
		There is an isomorphism of $\Zp$-local systems inside \(\mathbf{Loc}_{\Zp}(\mathrm{Sh}_{\K})\),
		\[
		\omega_{\K, \mathrm{\text{\'e}t}}(\Lambda_0^*)\cong \mathrm{H}^1_{\Zp}(\sA/\mathrm{Sh}_{\K}),
		\]
		which carries the tensor $\omega_{\K, \mathrm{\text{\'e}t}}(\mathrm{T}_0)\otimes 1$ to $\mathrm{T}_{0, p, \mathrm{\text{\'e}t}}\subseteq \mathrm{H}^1_{\mathbb{Q}_p}(\sA/\mathrm{Sh}_{\K^p})^{\otimes}$. Consequently, we have
		\[
		\mathrm{T}_{0, p, \mathrm{\text{\'e}t}}\subseteq \mathrm{H}^1_{\Zp}(\sA/\mathrm{Sh}_{\K})^{\otimes}\subseteq  \mathrm{H}^1_{\mathbb{Q}_p}(\sA/\mathrm{Sh}_{\K})^{\otimes},
		\] 
		and an isomorphism (in pro\'etale topology) of $\sG(\Zp)$-torsors over $\mathrm{Sh}_{\K}$, 
		\begin{equation}\label{Eq: EtalTorsor}
			\mathrm{Sh}_{\K^p}\cong \underline{\mathrm{Isom}}\big(\Lambda_0^*\otimes_{\Zp}\underline{\Zp}, \mathrm{T}_0), (\mathrm{H}^1_{\Zp}(\sA/\mathrm{Sh}_{\K}), \mathrm{T}_{0, p, \mathrm{\text{\'e}t}})\big).
		\end{equation}
	\end{proposition}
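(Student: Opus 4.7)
The plan is to reduce to the Siegel case and then translate the classical identification of the first $p$-adic étale cohomology of the universal abelian scheme with the standard Tate module into the language of the integral étale realization functor $\omega_{\K,\mathrm{\text{\'e}t}}$.

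First, I would exploit the functoriality of $\omega_{-,\mathrm{\text{\'e}t}}$ encoded in diagram~\eqref{Eq: EtalRealFunctorial} applied to the chosen morphism of unramified Shimura data $\iota: (\mathbf{G},\mathbf{X},\sG) \to (\mathbf{GSp}(V_0),\mathbb{H}_{\mathrm{g}}^{\pm}, \mathbf{GSp}(\Lambda_0))$ and the inclusion of level structures $\iota(\K)\subseteq \mathsf{L}$. Since $\sA=\iota^*\sA_{\mathsf{L}}$, we have $\mathrm{H}^1_{\Zp}(\sA/\mathrm{Sh}_{\K}) \cong \iota^*\mathrm{H}^1_{\Zp}(\sA_{\mathsf{L}}/\mathrm{Sh}_{\mathsf{L}})$, and by construction of $\mathrm{T}_{0,p,\mathrm{\text{\'e}t}}$ in~\cite[Lemma 3.1.6]{KimRapoportUniformisation}, the tensors on the left are pulled back from the Siegel side. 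On the $\omega$-side, the diagram~\eqref{Eq: EtalRealFunctorial} shows $\omega_{\K,\mathrm{\text{\'e}t}}(\iota^*\Lambda_0^*)\cong \iota^*\omega_{\mathsf{L},\mathrm{\text{\'e}t}}(\Lambda_0^*)$ compatibly with the image of $\mathrm{T}_0$. Hence it suffices to establish the isomorphism, together with its compatibility with tensors, for the Siegel datum itself.

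Next, in the Siegel case the identification is classical. The pro-étale tower $\mathrm{Sh}_{\mathsf{L}^p}\to \mathrm{Sh}_{\mathsf{L}}$ is the $\mathbf{GSp}(\Lambda_0)(\Zp)$-torsor obtained from level-$p^n$ structures on the universal abelian scheme, i.e.\ from compatible trivializations of the Tate module $\mathrm{T}_p\sA_{\mathsf{L}}[p^\infty]$ against $\Lambda_0$. By Poincaré duality (or simply by the definition of the first étale cohomology of an abelian variety), the associated local system for the contragredient representation $\Lambda_0^*$ is the first $p$-adic étale cohomology sheaf. Unwinding the definition of $\omega_{\mathsf{L},\mathrm{\text{\'e}t}}$ as $\mathrm{Sh}_{\mathsf{L}^p}\times^{\mathbf{GSp}(\Lambda_0),\rho_0}\Lambda_0^*$ therefore yields a canonical isomorphism $\omega_{\mathsf{L},\mathrm{\text{\'e}t}}(\Lambda_0^*)\cong \mathrm{H}^1_{\Zp}(\sA_{\mathsf{L}}/\mathrm{Sh}_{\mathsf{L}})$ in $\mathbf{Loc}_{\Zp}(\mathrm{Sh}_{\mathsf{L}})$.

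For the tensor compatibility on the Siegel side, observe that the tensor $\mathrm{T}_0\otimes 1\subseteq V_0^{\otimes}$ is, after base change to $\mathbb{Q}_p$, precisely the collection of tensors whose Hodge-theoretic avatars cut out $\mathbf{G}$ inside $\mathbf{GSp}(V_0)$. By construction in~\cite[Lemma 3.1.6]{KimRapoportUniformisation}, $\mathrm{T}_{0,p,\mathrm{\text{\'e}t}}$ is the $p$-component of the étale avatar of these absolute Hodge classes, obtained via the Betti-to-étale comparison on the universal family. Under the pro-étale trivialization given by $\mathrm{Sh}_{\mathsf{L}^p}$, this étale avatar is identified with $\mathrm{T}_0\otimes 1$ on the trivialized fibre; this is exactly the statement that the isomorphism of local systems carries $\omega_{\mathsf{L},\mathrm{\text{\'e}t}}(\mathrm{T}_0)\otimes 1$ to $\mathrm{T}_{0,p,\mathrm{\text{\'e}t}}$.

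Finally, once the tensor-preserving isomorphism is in hand, the stated identification of $\sG(\Zp)$-torsors follows formally. Because $\sG=\mathrm{Stab}(\rho_0,\mathrm{T}_0)\subseteq \mathbf{GSp}(\Lambda_0)$, the sheaf $\underline{\mathrm{Isom}}\bigl((\Lambda_0^*\otimes_{\Zp}\underline{\Zp},\mathrm{T}_0\otimes 1),(\mathrm{H}^1_{\Zp}(\sA/\mathrm{Sh}_{\K}),\mathrm{T}_{0,p,\mathrm{\text{\'e}t}})\bigr)$ is a $\sG(\Zp)$-torsor cut out inside the $\mathbf{GSp}(\Lambda_0)(\Zp)$-torsor of trivializations of $\mathrm{H}^1_{\Zp}(\sA_{\mathsf{L}}/\mathrm{Sh}_{\mathsf{L}})|_{\mathrm{Sh}_{\K}}$. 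On the other hand, $\mathrm{Sh}_{\K^p}=\iota^{-1}(\mathrm{Sh}_{\mathsf{L}^p})$ is the $\sG(\Zp)$-sub-torsor carved out by the same tensor-fixing condition. Matching these two descriptions via the already-established isomorphism gives~\eqref{Eq: EtalTorsor}.

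The main obstacle is the tensor compatibility step: the tensors $\mathrm{T}_{0,p,\mathrm{\text{\'e}t}}$ originate from absolute Hodge classes on $\sA$ transported via the Betti--étale comparison, while $\omega_{\K,\mathrm{\text{\'e}t}}(\mathrm{T}_0)\otimes 1$ is defined purely group-theoretically through the Shimura tower. Reconciling the two requires careful bookkeeping of the comparison between the topological fundamental group description of $\mathrm{Sh}_{\K^p}$ on connected components and the pro-étale $\sG(\Zp)$-torsor structure; this is exactly where the arguments of~\cite[\S 3.3]{IKYPrisReal} and~\cite[Lemma 3.1.6]{KimRapoportUniformisation} intertwine.
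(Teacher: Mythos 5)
Your overall plan---reduce to the Siegel datum via the functoriality diagram~\eqref{Eq: EtalRealFunctorial}, use the moduli interpretation of $\mathrm{Sh}_{\mathsf{L}^p}$ to nail down the Siegel case, and then deduce the torsor isomorphism from the tensor-preserving local system isomorphism---matches the structure of the paper's proof, and your last paragraph correctly captures the observation that integrality of $\mathrm{T}_{0,p,\text{\'et}}$ makes the local-system and torsor statements equivalent.

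However, there is a real gap in the tensor compatibility step. You claim twice (second paragraph: ``the tensors on the left are pulled back from the Siegel side''; third paragraph: ``the tensor compatibility on the Siegel side'') that the tensor compatibility reduces along $\iota$ to the Siegel datum. This is not right: the only tensor that lives on the Siegel Shimura variety is the one corresponding to the symplectic/Weil pairing. The full set $\mathrm{T}_0\subseteq\Lambda_0^\otimes$ cuts out $\sG$ inside $\mathrm{GL}(\Lambda_0)$ (resp.\ inside $\mathbf{GSp}(\Lambda_0)$) and therefore involves strictly more tensors than those fixed by $\mathbf{GSp}$; correspondingly, the étale tensors $\mathrm{T}_{0,p,\text{\'et}}$ of \cite[Lemma 3.1.6]{KimRapoportUniformisation} are built from absolute Hodge classes on $\sA$ over $\mathrm{Sh}_{\K}$ and do \emph{not} descend to $\mathrm{Sh}_{\mathsf{L}}$. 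So the reduction to Siegel handles the isomorphism of underlying local systems, but not the matching of tensors beyond the Weil pairing, and the argument you give in the third paragraph is really an argument on the Hodge-type side, not ``on the Siegel side.''

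What actually closes the gap is the paper's last sentence in the proof: the compatibility $\omega_{\K,\text{\'et}}(\mathrm{T}_0)\otimes 1 = \mathrm{T}_{0,p,\text{\'et}}$ is a statement about $\Zp$-local systems that can be checked after base change to $\mathbb{C}$, where it reduces to the identification of the Betti realization of the canonical $\sG(\mathbb{A}_f)$-local system on $\mathrm{Sh}_{\K}(\mathbb{C})$ with the topological one coming from the moduli interpretation---precisely \cite[Theorem 7.5]{MilneIntroShimVar}. Your ``Betti-to-\'etale comparison'' remark is heading in this direction, but without the reduction-to-$\mathbb{C}$ step and the citation to Milne, the argument is incomplete; you acknowledge this yourself by flagging the tensor compatibility as the main obstacle and deferring to \cite[\S 3.3]{IKYPrisReal}.
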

	
	Write \(\widehat{\sA}, \widehat{\sS}, \) etc., for the $p$-adic completion of \(\sA, \sS, \) etc., and \(\widehat{\sA}_{\eta}, \widehat{\sS}_{\eta}, \) etc. for their rigid generic fibers. Applying Theorem \ref{Thm: BiExEquiv} and \ref{Thm: GMainEquiv}, we obtain the following equivalences of categories:
	\begin{equation}\label{Eq: GMainEquivShimVar}
		\mathrm{T}_{\mathrm{\text{\'e}t}}: \mathbf{Vect}^{\mathrm{an}, \varphi}(\widehat{\sS}_{\prism})\cong \mathbf{Loc}_{\Zp}^{\cris}(\widehat{\sS}_{\eta}), \quad	\sG\text{-}\mathrm{T}_{\mathrm{\text{\'e}t}}: \sG\text{-}\mathbf{Vect}^{\varphi}(\widehat{\sS}_{\prism}) \cong \sG\text{-}\mathbf{Loc}_{\Zp}^{\prism-\mathrm{gr}}(\widehat{\sS}_{\eta}).
	\end{equation}
	
	Denote by $\mathrm{Sh}_{\K}^{\mathrm{an}}$ for the adic analytification of $E_v$-scheme $\mathrm{Sh}_{\K}$. Then we have an open embedding $\widehat{\sS}_{\eta}\hookrightarrow \mathrm{Sh}_{\K}^{\mathrm{an}}$ of adic spaces over $\mathrm{Spa} E_v$. Note that we have the following analytification-restriction functor, which is an exact $\Zp$-linear $\otimes$-functor, 
	\begin{equation}\label{Eq: EtalRestr}
		\mathbf{Loc}_{\Zp}(\mathrm{Sh}_{\K})\xrightarrow{(\cdot)^{\mathrm{an}}} \mathbf{Loc}_{\Zp}(\mathrm{Sh}_{\K}^{\mathrm{an}})\xrightarrow{\text{restriction}} \mathbf{Loc}_{\Zp}(\widehat{\sS}_{\eta}).
	\end{equation}
	
	The \emph{integral \'etale realization functor for \(\widehat{\sS}_{\eta}\)},  
	\(
	\omega^{\mathrm{an}}_{\K, \mathrm{\text{\'e}t}}: \mathbf{Rep}_{\Zp}(\sG)\to \mathbf{Loc}_{\Zp}(\widehat{\sS}_{\eta}),
	\)
	is defined as the composition of $\omega_{\K, \mathrm{\text{\'e}t}}$ in \eqref{Eq: EtalRealShimVar} with the canonical functor in \eqref{Eq: EtalRestr}. 
	
	It follows from the rigid analytic GAGA theorem that under the functor in \eqref{Eq: EtalRestr}, the $\Zp$-local system $\mathrm{H}^1_{\Zp}(\sA/\mathrm{Sh}_{\K})$ is sent to the local system $\mathrm{H}^1_{\Zp}(\widehat{\sA}_{\eta}/\widehat{\sS}_{\eta})$. Hence applying this functor for the following isomorphism in \(\mathbf{Loc}_{\Zp}(\mathrm{Sh}_{\K})\) (with tensors understood as morphisms), 
	\[
	( \omega_{\K, \mathrm{\text{\'e}t}}(\Lambda_0^*), \omega_{\K, \mathrm{\text{\'e}t}}(\mathrm{T}_0) )\cong (\mathrm{H}^1_{\Zp}(\sA/\mathrm{Sh}_{\K}),  \mathrm{T}_{0, p, \mathrm{\text{\'e}t}} ),
	\]
	one obtains in $\mathbf{Loc}_{\Zp}(\widehat{\sS}_{\eta})$ an identification: 
	\[
	( \omega^{\mathrm{an}}_{\K, \mathrm{\text{\'e}t}}(\Lambda_0^*),   \omega^{\mathrm{an}}_{\K, \mathrm{\text{\'e}t}}(\mathrm{T}_0) )\cong (\mathrm{H}^1_{\Zp}(\widehat{\sA}_{\eta}/\widehat{\sS}_{\eta}),  \mathrm{T}^{\mathrm{an}}_{0, p, \mathrm{\text{\'e}t}} ),
	\]
	where $\mathrm{T}^{\mathrm{an}}_{0, p, \mathrm{\text{\'e}t}}$ denotes the image of $\mathrm{T}_{0, p, \mathrm{\text{\'e}t}}$ under the functor \eqref{Eq: EtalRestr}.
	Since it is known that $\mathrm{H}^1_{\Zp}(\widehat{\sA}_{\eta}/\widehat{\sS}_{\eta})$ is a crystalline \'etale local system on $\widehat{\mathcal{S}}_{\eta}$, one sees that the integral \'etale realization functor $\omega^{\mathrm{an}}_{\K, \mathrm{\text{\'e}t}}$ defines an object in $\mathcal{G}\text{-}\mathbf{Loc}^{\cris}_{\Zp}(\widehat{\sS}_{\eta})$.

	\subsection{The prismatic $\mathcal{G}$-torsor $\mathbb{J}_{\prism}$} 
	Write \(
	(\mathfrak{M}_{\widehat{\sA}/\widehat{\sS}},  \varphi_{\mathfrak{M}_{\widehat{\sA}/\widehat{\sS}}}),
	\) for the pair \( (\mathrm{H}^1_{\prism}(\widehat{\sA}/\widehat{\sS}), \varphi) \) as in Theorem \ref{Thm: PrisFCrysAbeSchem}. Let $K_0$ be an unramified extension of $E_v$, and let $K/K_0$ be a finite totally ramified extension. Given a point $x \in \mathrm{Sh}_{\K}(K)$, let $E_0(t) \in {\mathcal{O}}_{K_0}[t]$ be the minimal polynomial of a uniformizer $\pi \in K$. This yields a Breuil-Kisin prism \(\underline{\mathfrak{S}_0} := (\mathcal{O}_{K_0}[[t]], E_0(t))\) and thus an object $(\underline{\mathfrak{S}_0}, x)$ in $\widehat{\mathcal{S}}_{\prism}$. The evaluation of $(\mathfrak{M}_{\widehat{\sA}/\widehat{\sS}}, \varphi_{\mathfrak{M}_{\widehat{\sA}/\widehat{\sS}}})$ at $(\underline{\mathfrak{S}_0}, x)$ corresponds to the Breuil-Kisin module, say $\mathfrak{M}_x$, associated with the $\mathrm{Gal}(\overline{K}/K)$-stable $\mathbb{Z}_p$-lattice $\mathrm{H}^1_{\text{\'et}}(\mathcal{A}_x \otimes_K \overline{K}, \mathbb{Z}_p)$ of the crystalline representation $\mathrm{H}^1_{\text{\'et}}(\mathcal{A}_x \otimes_K \overline{K}, \mathbb{Q}_p)$ of $\mathrm{Gal}(\overline{K}/K)$. Recall that $\mathfrak{M}_x$ is obtained by applying the functor $\mathfrak{M}$ in \cite[Theorem 1.2.1]{KisinIntegralModels} (cf. \cite[Lemma 2.1.15]{KisinCrystalineRepresentations}).
	
	\begin{theorem}\label{Thm: PrisGTors} 
		\begin{enumerate}[(1).]
			\item Under the equivalence $\mathrm{T}_{\mathrm{\text{\'e}t}}$ in \eqref{Eq: GMainEquivShimVar}, we have the following identification in  $\mathbf{Loc}^{\cris}_{\Zp}(\widehat{\sS}_{\eta})$,
			\[
			\mathrm{T}_{\mathrm{\text{\'e}t}}((\mathfrak{M}_{\widehat{\sA}/\widehat{\sS}}, \varphi_{\mathfrak{M}_{\widehat{\sA}/\widehat{\sS}}}))= \mathrm{H}^1_{\Zp}(\widehat{\sA}_{\eta}/\widehat{\sS}_{\eta}). 
			\]
			\item  Denote by $\mathrm{T}_{\prism}\subseteq \mathfrak{M}_{\widehat{\sA}/\widehat{\sS}}^{\otimes} $ the set of Frobenius invariant prismatic tensors corresponding to $\mathrm{T}^{\mathrm{an}}_{0, p, \mathrm{\text{\'e}t}}\subseteq \mathrm{H}^1_{\Zp}(\widehat{\sA}_{\eta}/\widehat{\sS}_{\eta})^\otimes$ via $\mathrm{T}_{\text{\'et}}$. Then \begin{equation}\label{Eq: PrisGTor}
				\mathbb{J}_{\prism}:=	\underline{\mathrm{Isom}}((\Lambda_0^* \otimes_{\Zp} \mathcal{O}_{\prism}, \mathrm{T}_0\otimes 1),  (\mathfrak{M}_{\widehat{\sA}/\widehat{\sS}}, \mathrm{T}_\prism))
			\end{equation}
			is $\mathcal{G}$-torsor over $\mathcal{S}_{\prism}$. The Frobenius of $\mathfrak{M}_{\widehat{\sA}/\widehat{\sS}}$ induces a Frobenius map $\varphi_{\mathbb{J}_{\prism}}: \varphi^*\mathbb{J}_{\prism}\to \mathbb{J}_{\prism}$ such that $\varphi_{\mathbb{J}_{\prism}}[1/I_{\prism}]$ is an isomorphism.   
		\end{enumerate}
	\end{theorem}

	\begin{proof}
		
		(1). By \cite[Remark 9.16]{GRPrisCris}, we have 
		\(
		\mathrm{T}_{\mathrm{\text{\'e}t}}((\mathfrak{M}_{\widehat{\sA}/\widehat{\sS}}, \varphi_{\mathfrak{M}_{\widehat{\sA}/\widehat{\sS}}})_{\mathrm{tl}})= \mathrm{H}^1_{\Zp}(\widehat{\sA}_{\eta}/\widehat{\sS}_{\eta})_{\mathrm{tl}},
		\)
		where $(\cdot)_{\mathrm{tf}}$ in loc. cit. means the operation of taking quotient by $p$-power torsion. But in our case, this operation is unnecessary. Indeed, by Theorem \ref{Thm: PrisFCrysAbeSchem}, \( \mathfrak{M}_{\widehat{\sA}/\widehat{\sS}} \) is $p$-torsion free. Hence we have \( \mathrm{H}^1_{\prism}(\widehat{\sA}/\widehat{\sS})=\mathrm{H}^1_{\prism}(\widehat{\sA}/\widehat{\sS})_{\mathrm{tl}}.
		\)
		On the other hand, it is a well-known fact that pro\'etale locally the $\Zp$-local system $\mathrm{H}^1_{\Zp}(\widehat{\sA}_{\eta}/\widehat{\sS}_{\eta})$ is a free $\Zp$-module of rank $2\mathrm{g}$.

		(2). This is \cite[Theorem 2.12]{IKYPrisReal}; c.f. \cite[Theorem 2.28]{IKYTannak}.	\end{proof}

	We shall refer to either $\mathbb{J}_{\prism}$ or the pair $(\mathbb{J}_{\prism}, \varphi_{\mathbb{J}_{\prism}})$ as the \emph{prismatic $\mathcal{G}$-torsor} associated with $\sS=\sS_{\K}$.
	
	Recall that we have fixed a minuscule cocharacter $\tilde{\mu}: \mathbb{G}_{m, W}\to \sG_{W}$ over $W$ in \ref{S: HodgeCoch}. Following \cite[Definition 3.12]{IKYTannak}), we say a prismatic Frobenius $\sG$-torsor $(\sE, \varphi_{\sE})$ is \emph{of type $\tilde{\mu}$} if for each $\underline{R}=(R, I)$, there is a flat cover $\underline{A}=(A, IA)$ such that: 
	(1). $IA$ is principal, say $IA=(d)$;
	(2).  $\sE(\underline{A})\neq \emptyset$; equivalently, the restriction of $\sE$ on the slice category $\prism/\underline{A}:=(\widehat{\sS}/\underline{A})_{\prism} $ admits a trivialization $\beta: \sE|_{\prism/\underline{A}} \cong \sG_{\prism/\underline{A}}$ of $\sG$-torsors\footnote{We use the notation \(\sG_{\prism/\underline{A}}\) from \cite{IKYTannak}, which would correspond to $\mathcal{G}_{\tilde{\prism}/\underline{A}}$ in our notational system to be introduced later.};
	(3). The trivialized Frobenius $\sG_{{\prism/\underline{A}}[1/d]}\cong \sG_{{\prism/\underline{A}}[1/d]}$ under $\beta$, corresponds to the multiplication (on the left) by an element inside $\sG(A)\tilde{\mu}(d)\sG(A)$.

	\begin{theorem}\label{Thm: PrismticTypeMu}
		The prismatic $\sG$-torsor $(\mathbb{J}_{\prism}, \varphi_{\mathbb{J}_{\prism}})$ over $\widehat{\sS}$ is of type $\tilde{\mu}$.  
	\end{theorem}
	\begin{proof}
		This is part of \cite[Theorem 2.14]{IKYPrisReal}, which is proved in \cite[Corollary 3.7]{IKYPrisReal}.
	\end{proof}
	
	\section{Framed crystalline prismatization and reduction maps}\label{S:Prismatization}
	
	\subsection{Basic setup} \label{S:BasicSetup}
	Set \( K_0 = K = E_v=W[1/p]\), where $E_v$ is as in the previous section.  As in \ref{S: BKPrism}, attached to the \emph{base crystalline prism} \( \underline{W} = (W, p,\varphi) \) is the \emph{base Breuil-Kisin prism} \( \underline{\mathfrak{S}} = (\mathfrak{S}, E, \varphi_{\mathfrak{S}}) \), where \( \mathfrak{S} = \mathfrak{S}_W = W[[t]] \)  and \( E = E(t) = t + p \).  
	
	Fix a smooth \( p \)-adic formal scheme \( \mathfrak{X} \) over \( W \), with special fiber \( X := \mathfrak{X} \otimes_W \kappa \).  Denote \( \mathfrak{X}_{\prism_{\BK}} := (\mathfrak{X}/\underline{\mathfrak{S}})_{\prism} \) and \( \mathfrak{X}_{\prism_{\cris}} := (\mathfrak{X}/\underline{W})_{\prism} = (X/\underline{W})_{\prism} \).We introduce in this section the framed crystalline prismatization $\mathfrak{X}_{\tilde{\prism}_{\cris}}$ of $\mathfrak{X}$. When $\mathfrak{X}=\widehat{\mathcal{S}}$, the $p$‑adic completion of the $p$‑integral model $\mathcal{S}$ of a Shimura variety, this prismatization serves as the source space of the integral Frobenius period map named in the title of the present article.
	
	\subsection{Reduction map for sheaves over $\mathfrak{X}_{\BKprism}$}
	Note that there is an obvious morphism of prisms, 
	\begin{equation}\label{Eq:PrismModt}
		\underline{\mathfrak{S}}\xrightarrow{t\mapsto 0} \underline{W}. 
	\end{equation}
	Each object $\underline{R}$ in $\mathfrak{X}_{\prism_{\cris}}$ can be viewed as an object in $\mathfrak{X}_{\prism_{\BK}}$
	by precomposing with this morphism. This induces a continuous inclusion functor, $\iota_0: X_{\prism_{\cris}}\to \mathfrak{X}_{\prism_{\BK}}$ given by $(\underline{R}, x)\mapsto ( \underline{R}, x) $, whose composition with the structure map $\mathfrak{X}_{\BKprism}\to \mathfrak{X}_{\prism}$ is the structure morphism $\mathfrak{X}_{\crispris}\to \mathfrak{X}_{\prism}$. This functor induces the pullback functor on sheaves:  	
	\begin{equation} \label{Eq: ResToCrisPris}
		(\cdot)_{\prism_{\cris}}:	\mathbf{Shv}(\mathfrak{X}_{\prism_{\BK}})\to \mathbf{Shv}(X_{\prism_{\cris}}), \quad  \mathcal{F}\mapsto \mathcal{F}_{\prism_{\cris}}:=\iota_0^* \mathcal{F}.
	\end{equation}
	
	On the other hand, we have a continuous functor 
	\(
	\pi: \mathfrak{X}_{\prism_{\BK}}\to X_{\prism_{\cris}}, 
	\) 
	which is given by base change along the morphism $\underline{\mathfrak{S}}\to \underline{W}$ in \eqref{Eq:PrismModt}, i.e.,  
	\begin{equation}\label{Eq:PrismBC}
		(\underline{A},\, x: \Spf A/EA \to \widehat{\mathfrak{X}}) \mapsto (\underline{A/tA}:=(A/ tA, (p)), \,\bar{x}: \mathrm{Spec} A/(t, p)\to X), 
	\end{equation}
	giving a section of the inclusion function $\iota_0$, i.e., we have  $\pi\circ \iota_0\cong \mathrm{id}_{X_{\prism_{\cris}}}$. Denote by $\pi^*:\mathbf{Shv}(X_{\prism_{\cris}}) \to \mathbf{Shv}(\mathfrak{X}_{\prism_{\BK}})$ the induced map on sheaves. We have for each $\mathcal{F}\in \mathbf{Shv}(\mathfrak{X}_{\prism_{\BK}})$ a canonical reduction map: 
	\begin{equation}\label{Eq: RedMap}
		\mathcal{F} \to \pi^*\mathcal{F}_{\prism_{\cris}},
	\end{equation}
	whose evaluation on $(\underline{A}, x)\in \mathfrak{X}_{\BKprism}$ is given by the canonical reduction map $\mathcal{F}(\underline{A}, x)\to \mathcal{F}(\underline{A/tA}, \bar{x})$ induced by the morphism in \eqref{Eq:PrismBC}
	
	\subsection{Framed crystalline prismatization}
	\begin{definition}
		The \emph{framed crystalline prismatization} of $\mathfrak{X}$ over $\underline{W}$ is the site $ \mathfrak{X}_{\tilde{\prism}_\cris}$ of pairs $(\underline{R}, x: \Spf R\to \widehat{\mathfrak{X}})$, where $\underline{R}$ is a  prism over $\underline{W}$, and $x: \Spf R \to \widehat{\mathfrak{X}}$ is a morphism of $p$-adic formal schemes over $\Spf W$; morphisms in this category are morphisms between crystalline prisms preserving the sections $x\in \mathrm{Hom}_{\Spf W}(\Spf R, \widehat{\mathfrak{X}})$; it is equipped with the flat topology as that of $W_{\crispris}$.  
	\end{definition}
	A geometric definition of  $\mathfrak{X}_{\tilde{\prism}{\cris}}$ will be given in \S~\ref{S:LanguagueSpace}.
	
	\subsection{Crystalline prismatic reduction map}
	There is an obvious continuous functor from $\mathfrak{X}_{\tilde{\prism}_\cris}$ to $\mathfrak{X}_{\crispris}=X_{\crispris}$, which we call the \emph{crystalline prismatic reduction map of $\mathfrak{X}$}, 
	\begin{equation}\label{Eq:CrisPrisRedMap}
		\mathrm{Red}: \mathfrak{X}_{\tilde{\prism}_\cris} \to X_{\crispris}, \quad (\underline{R}, x)\mapsto (\underline{R}, \bar{x}).
	\end{equation}
	It is an example of $\partial \mathrm{Mod}_p$ map as in Diagram \eqref{RelBRedDiagmIntro}.
	One observes that if \(\mathfrak{X}\) is a \(\kappa\)-scheme (viewed as a formal scheme over \(W\)), then \(\mathfrak{X}_{\tilde{\prism}_\cris} = \emptyset\). Thus, in this case, the reduction map is not meaningful. 
	
	\subsection{Reduction maps for sheaves over $\mathfrak{X}_{\tilde{\prism}_{\cris}}$}
	As usual, for any crystalline prism $\underline{R}=(R, pR)$, denote by 
	\( 
	\underline{\mathfrak{S}_R}=(\mathfrak{S}_R, E \mathfrak{S}_R, \varphi_{\mathfrak{S}_R}),
	\)
	the Breuil-Kisin prism over $\underline{\mathfrak{S}}$, as in \S~\ref{S: BKPrism}.  The association 
	\begin{equation}\label{Eq:TilCris2BK}
		( \underline{R}, x: \Spf R\to \mathfrak{X})\mapsto (\underline{\mathfrak{S}_R}, x: \Spf \mathfrak{S}_R/E=\Spf R \to \mathfrak{X}) 
	\end{equation}
	defines a continuous functor 
	\(
	\iota: \mathfrak{X}_{\tilde{\prism}_{\cris}} \to \mathfrak{X}_{\prism_{\BK}},\)
	inducing via pullback functor between sheaves, 
	\begin{equation}\label{Eq: BKResCris2}
		(\cdot)_{\prism^{\BK}_{\cris}}=\iota^*:  \mathbf{Shv}(\mathfrak{X}_{\prism_{\BK}})\to \mathbf{Shv}(\mathfrak{X}_{\tilde{\prism}_{\cris}}), \quad  \mathcal{F}\mapsto \mathcal{F}_{\prism^{\BK}_{\cris}}:=\iota^* \mathcal{F}. 
	\end{equation}
	
	One verifies readily that the crystalline prismatic reduction map of $\mathfrak{X}$ defined in \eqref{Eq:CrisPrisRedMap} is equal to the composition $\pi \circ\iota$. Then applying the functor $(\cdot)_{\prism^{\BK}_{\cris}}$ for the reduction map in \eqref{Eq: RedMap}, we obtain a reduction map in $\mathfrak{X}_{\prism^{\BK}_{\cris}}$, 
	\[
	\mathrm{Red}^{\BK}_{\cris}: \mathcal{F}_{\prism^{\BK}_{\cris}} \to \iota^* \pi^* \mathcal{F}_{\prism_{\cris}}=\mathrm{Red}^* \mathcal{F}_{\crispris}.
	\]
	Its evaluation on a point $(\underline{R}, x)$ is given by the morphism 
	\(
	\mathcal{F}(\underline{\mathfrak{S}_R}, x)\to \mathcal{F}(\underline{R}, \bar{x}) 
	\)
	induced by the morphism $(\underline{\mathfrak{S}_R}, x)\to (\underline{R}, \bar{x})$ inside $\mathfrak{X}_{\prism_{\BK}}$. As for each object $(\underline{R}, x)$ of $\mathfrak{X}_{\tilde{\prism}_\cris}$, the evaluation of $\mathrm{Red}^*\mathcal{F}_{\crispris}$ is determined by its reduction $(\underline{R}, \bar{x})$ \footnote{With Definition \ref{Def:RedSpace}, we describe this by saying that the sheaf $\mathrm{Red}^*\mathcal{F}_{\crispris}$ \emph{is of reduction} along $\mathrm{Red}: \mathfrak{X}_{\tilde{\prism}_\cris}\to \mathfrak{X}_{\crispris}$.}, we abuse notation and write  $\mathcal{F}_{\crispris}$ for $\mathrm{Red}^* \mathcal{F}_{\crispris}$. Accordingly, the reduction map $\mathrm{Red}_{\crispris}^{\BK}$ takes the form, 
	\begin{equation}\label{Eq: BKCrisRedMap}
		\mathrm{Red}^{\BK}_{\cris}: \mathcal{F}_{\prism^{\BK}_{\cris}} \to  \mathcal{F}_{\crispris}.
	\end{equation}

	\section{$\Spaces$}\label{S:LanguagueSpace}
	
	Let $\mathfrak{X}$ be a fixed smooth $p$-adic formal scheme over $\Spf W$ as in the previous section.
	
	\subsection{Prismatic sites as sheaves}\label{S:SitesAsSheaves} 
	Set $W_{\prism}=(\Spf W)_{\prism}$. Unwinding definition, it is the site of prisms $(A, I)$, with $A$ a $W$-algebra, equipped with the flat topology. 
	
	One observes that the structure functor $\mathfrak{X}_{\prism}\to W_{\prism}$ can be interpreted as a sheaf over $W_{\prism}$, which associates each object $(A, I)$ in $W_{\prism}$ the set $\mathrm{Hom}_{\Spf W} (\Spf A/I, \mathfrak{X})$. Similarly,  if we set $W_{\BKprism}=(\Spf W/\underline{\mathfrak{S}})$ and $W_{\crispris}=(\Spf W/\underline{W})$, the sites $\mathfrak{X}_{\BKprism}$ and $\mathfrak{X}_{\crispris}=X_{\crispris}$ can be interpreted as sheaves over $W_{\BKprism}$ and $W_{\crispris}$ respectively. Such an observation leads us to view the sites $\mathfrak{X}_{\prism}$, $\mathfrak{X}_{\BKcrispris}$ and $X_{\crispris}$ as \emph{geometric spaces}. 
	
	In addition, we have the following identifications of fibered categories (equivalently of sheaves):  
	\[
	\mathfrak{X}_{\BKprism}=\mathfrak{X}_{\prism}\times_{W_{\prism}}W_{\BKprism},  \quad  \mathfrak{X}_{\crispris}=\mathfrak{X}_{\prism}\times_{W_{\prism}}W_{\crispris}
	\]
	over $W_{\BKprism}$ and $W_{\crispris}$ respectively. Here $W_{\crispris}, W_{\BKprism}$ can be viewed as slice categories of $W_{\prism}$, thus coming with structure morphisms to $W_{\prism}$, and the fiber products above are fiber product of categories (equivalently, pull back of sheaves since we know $\mathfrak{X}_{\prism}$ is a sheaf over $W_{\prism}$).
	
	Note that we have $W_{\crispris}=W_{\tilde{\prism}_{\cris}}$  and that the functor $\iota: W_{\crispris}\to W_{\BKprism}$ defined as in \eqref{Eq:TilCris2BK} (see also the next subsection) is a continuous functor. The identification of sheaves over $W_{\crispris}$, 
	\[
	\mathfrak{X}_{\tilde{\prism}_{\cris}}=\mathfrak{X}_{\BKprism}\times_{W_{\BKprism}, \iota} W_{\crispris},
	\]
	gives a \emph{geometric} definition of our framed crystalline prismatization $\mathfrak{X}_{\tilde{\prism}_{\cris}}$. 
	
	\subsection{Base $\Spaces$} \label{BaseSpace} 
	
	The observation in \S \ref{S:SitesAsSheaves} motivates the definitions introduced in this subsection to emphasize the geometric perspective of sites. 
	
	\begin{definition}
		A \emph{crystalline $W$-algebra} is defined as a flat $W$-algebra $R$ that admits a crystalline prism structure for some Frobenius lift $\varphi: R \to R$. A \emph{crystalline $\kappa$-algebra} is defined as a $\kappa$-algebra which admits a crystalline frame, i.e., there exists a crystalline prism $\underline{R}$ such that $\bar{R} = R/pR$. Write $W_{\cris}$ and $\kappa_{\cris}$ for the categories of crystalline $W$-algebras and $\kappa$-algebras, respectively.
	\end{definition}  
	\textbf{Convention/Definition:} A base $\Space$ attached to $W$ is one of the following sites:
	\[
	W_{\prism}, \quad W_{\BKprism},\quad  W_{\crispris}=\kappa_{\crispris}, \quad  W_{\cris}, \quad \kappa_{\cris},
	\] of which $W_{\prism}$ is the \emph{absolute} base $\Space$. \emph{Maps} between these base $\mathsf{spaces}$ are continuous functors.  
	
	\begin{definition}
		\begin{enumerate}
			\item A \(\mathsf{space}\) \(X\) over a base \(\mathsf{space}\) \(Y\) (i.e., one of the sites \(W_{\prism}, W_{\BKprism}, W_{\crispris}, W_{\cris}\), or \(\kappa_{\cris}\)) is a prestack of groupoids over \(Y\). We also say $X$ is a $\mathsf{space}$ fibered over $Y$. 
			
			\item If \(Y'\) is another base \(\mathsf{space}\) and \(X' \to Y'\) is a \(\mathsf{space}\) over \(X'\), a \emph{map} \(X \to X'\) is a continuous functor intertwining a base \(\mathsf{space}\) map $Y'\to Y$. We often omit the bases when they are clear in context.
			
			\item Given a map of $\Spaces$ $f: Y\to X$, we say $Y$ is a $\mathsf{subspace}$ of $X$ if the functor $f$ is faithful.
			
			\item An object in a $\mathsf{space}$ \(X\) is referred to as a \emph{point} of \(X\), and we write \( x \in X\) by abuse of notation. 
			
			\item Let \(X\) be a $\Space$ with  structure map \(f: X \to Z\) to the base $\Space$ $Z$. For each \(z\in Z\), set \(X(z)=f^{-1}(z)\) to be the site of points \(x\in X\) such that \(f(x) \cong z\), with the induced topology from \(X\). It is referred to as the \emph{fiber} of \(z\) in \(X\).
		\end{enumerate}
	\end{definition}
	
	\subsection{Prismatic sites as $\Spaces$}\label{S:SitesAsSpaces}
	
	The site \(\mathfrak{X}_{\prism}\) is a $\Space$ over \(W_{\prism}\); \(\mathfrak{X}_{\BKprism}\) is a $\Space$ over \(W_{\BKprism}\); \(\mathfrak{X}_{\tilde{\prism}_{\cris}}\) and \(X_{\crispris}\) are $\Spaces$ over \(W_{\crispris}\). Explicitly, \(\mathfrak{X}_{\BKprism}\) is a $\Space$ fibered over \(W_{\BKprism}\) with fibers given by 
	\((A, I) \mapsto \mathrm{Hom}_{\Spf W}(\Spf A/I, \mathfrak{X})\); 
	\(\mathfrak{X}_{\tilde{\prism}_\cris}\) is a $\Space$ fibered over \(W_{\crispris}\) 
	with fibers given by \(\underline{R} \mapsto \mathrm{Hom}_{\Spf W}(\Spf R, \mathfrak{X})\); 
	\(\mathfrak{X}_{\crispris} = X_{\crispris}\) is a $\Space$ fibered over \(W_{\crispris}\) with fibers given by \(\underline{R} \mapsto X(\bar{R})\). Henceforth, we use \(\bar{R}\) to denote \( R/pR \).
	
	The functors \(\iota_0: \mathfrak{X}_{\crispris} \to \mathfrak{X}_{\BKprism}\) and \(\pi: \mathfrak{X}_{\BKprism} \to \mathfrak{X}_{\crispris}\) are maps of $\Spaces$, with compatibility with the corresponding maps between their base $\Spaces$ arising from the functoriality for the structure morphism \(\mathfrak{X} \to \Spf W\). To be more precise, they are over the inclusion map \(\iota_0: W_{\prism_\cris} \to W_{\prism_\BK}\), and the map \(\pi: W_{\prism_{\BK}} \to W_{\prism_{\cris}}\) is defined by \( \underline{A} \mapsto \underline{A} \otimes_{\underline{\mathfrak{S}}} \underline{W} = (A/tA, (p))\). 
	
	The functors \(\iota: \mathfrak{X}_{\tilde{\prism}_{\cris}} \to \mathfrak{X}_{\BKprism}\) and \(\mathrm{Red}: \mathfrak{X}_{\tilde{\prism}_{\cris}} \to X_{\crispris}\) are maps of $\Spaces$, with underlying base $\Space$ maps given by:
	\[
	\iota: W_{\crispris}=W_{\tilde{\prism}_{\cris}} \to W_{\BKprism}, \quad \underline{R} \mapsto \underline{\mathfrak{S}_R},
	\]
	and \(\iota_0=\mathrm{id}: W_{\crispris}=W_{\tilde{\prism}_{\cris}}\to W_{\crispris}\) respectively. 
	
	The base $\Space$ maps $\pi, \iota_0$ and $\iota$ satisfy obvious relations: \(\pi \circ \iota_0 = \mathrm{id}_{W_{\crispris}}\) and \(\pi \circ \iota \cong \mathrm{id}_{W_{\crispris}}\).
	
	\subsection{Crystalline Prismatization Functors}\label{S:CrysPrisFunct}
	
	Let \((\mathbf{SmFSch}/W)\) denote the category of smooth $p$-adic formal schemes over $W$. One can define two functors from \((\mathbf{SmFSch}/W)\) to the category of \(W_{\crispris}\)-spaces: the \emph{framed crystalline prismatization functor} \(\mathfrak{X} \mapsto \mathfrak{X}_{\tilde{\prism}_\cris}\), and the \emph{crystalline prismatization functor} \(\mathfrak{X} \mapsto \mathfrak{X}_{\crispris}\).
	
	These functors are linked by a natural transformation induced by reduction. By definition, \(X_{\crispris} = (\mathrm{Res}_{\kappa/W} X)_{\tilde{\prism}_\cris}\), and the canonical map \(\mathrm{Red}: \mathfrak{X} \to \mathrm{Res}_{\kappa/W} X\) induces a map
	\(
	\mathrm{Red}: \mathfrak{X}_{\tilde{\prism}_\cris} \to X_{\crispris},
	\)
	which coincides with the reduction map in \eqref{Eq:CrisPrisRedMap}.

	\section{Base reduction diagrams}\label{S:ReductionTheory}
	
	\subsection{Base reduction diagram}
	We are interested in the following canonical \emph{reduction} maps  
	\begin{equation}\label{Eq:BaseRedMap}
		W_{\BKprism}\xrightarrow{\pi} W_{\crispris}\xrightarrow{\pi_{\crispris}}W_{\cris}\xrightarrow{\pi_{\cris}}\kappa_{\cris}.
		%\xymatrix{ W_{\BKprism}\ar[r]^{(1)}&W_{\crispris}\ar[d]^{(2)}\\
			%  &W_{\cris}\ar[d]^{(3)}\\
			% &\kappa_{\cris},} 
	\end{equation}
	between base $\Spaces$,
	where $\pi_{\crispris}$ and $\pi_{\cris}$ are given by
	$(R, \varphi)\mapsto R$ \footnote{We shall refer to such maps as \emph{defrobenius maps}.} and $R\mapsto \bar{R}$ respectively. We refer to the sequence in \eqref{Eq:BaseRedMap} as the \emph{base reduction diagram}, with a relative version given in \S\ref{S:RelBasRedDiag}.

	\subsection{A notational convention}\label{S:subnotationconven} 
	
	Let us return to the setting of \S \ref{S:CrysPrisFunct}. Each $\mathfrak{X} \in (\mathbf{SmFSch}/W)$ also defines an object in $\mathbf{Shv}(W_{\cris})$, denoted by $\mathfrak{X}_{\cris}$, which associates to each $R \in W_{\cris}$ the set $\mathrm{Hom}_{\Spf W}(\Spf R, \mathfrak{X})$. Alternatively, it can be viewed as the restriction of $\mathcal{X}$ as a functor on $(\mathbf{Alg}/W)$ to $W_{\cris}$. We consider $\mathfrak{X}_{\cris}$ as a $W_{\cris}$-$\mathsf{space}$. Additionally, starting from a $\kappa$-scheme (or a $\kappa$-prestack), we obtain a $\kappa_{\cris}$-$\mathsf{space}$ defined as the restriction of $X$, viewed as a sheaf over $(\mathbf{Alg}/\kappa)$, to $\kappa_{\cris} \subseteq (\mathbf{Alg}/\kappa)$. 
	
	A conflict arises when we take $\mathcal{X} = X$: the object $X_{\cris} = \mathfrak{X}_{\cris} \in (W_{\cris} \textbf{-} \mathsf{space})$ is empty, whereas $X_{\cris} \in (\kappa_{\cris} \textbf{-} \mathsf{space})$ is typically non-empty. To avoid confusion, we establish the following convention:
	
	\begin{center}
		\fbox{
			\begin{minipage}{0.8\textwidth}
				\centering
				\textbf{Convention} \label{convention:crys} : \\
				For a prestack $X$ over $\kappa$, we set $X_{\cris} \in (W_{\cris} \textbf{-} \mathsf{space})$ as the pullback of $X_{\cris} \in (\kappa_{\cris} \textbf{-} \mathsf{space})$ via the base reduction map $\pi_{\cris}: W_{\cris} \to \kappa_{\cris}$.
			\end{minipage}
		}
	\end{center}

	\subsection{Relative base $\Spaces$}\label{S:RelBaseSpace}
	Let $\mathfrak{X}\in (\mathbf{SmFSch}/W)$, with special fiber $X$. 
	Apart from the $\Spaces$ $\mathfrak{X}_{\prism}, \mathfrak{X}_{\BKprism}, X_{\crispris}$, we also consider the following \(\mathsf{spaces}\) attached to \(\mathfrak{X}\):
	\begin{enumerate}
		\item The \(W_{\cris}\)-\(\mathsf{space}\) \(\mathfrak{X}_{\cris}\subseteq \mathfrak{X}_{\mathrm{ZAR}}\) of points \((R, x: \Spf R \to \mathfrak{X})\), with \(R \in W_{\cris}\).
		\item The \(W_{\cris}\)-\(\mathsf{space}\) \(X_{\widetilde{\cris}}\) of  points \((R, \bar{x}: \mathrm{Spec} \bar{R} \to X)\), with \(R \in W_{\cris}\).
		\item The \(\kappa_{\cris}\)-\(\mathsf{space}\) \(X_{\cris}\subseteq X_{\mathrm{ZAR}}\) of points \((\bar{R}, \bar{x}: \mathrm{Spec} \bar{R} \to X)\), with \(\bar{R} \in \kappa_{\cris}\).
	\end{enumerate}
	
	\textbf{Definition/Convention}: A \emph{relative base \(\mathsf{space}\)} (attached to $\mathfrak{X}$) is one of the following sites:
	\begin{equation}\label{Eq: RelBasSpace}
		\mathfrak{X}_{\BKprism}, \quad \mathfrak{X}_{\tilde{\prism}_\cris}, \quad X_{\crispris}, \quad \mathfrak{X}_{\cris}, \quad X_{\widetilde{\cris}}, \quad X_{\cris}.
	\end{equation}
	\emph{Maps} between these relative base $\Spaces$ are continuous functors.
	
	\subsection{Relative base reduction diagram}\label{S:RelBasRedDiag}
	%The following commutative diagram of canonical reduction maps between relative base $\mathsf{spaces}$, referred to as \emph{relative base reduction diagram}, has already appeared in the introduction to the present article. 
	%\begin{equation}\label{RelBRedDiagm}
	%    \xymatrix{
		%		& \mathfrak{X}_{\tilde{\prism}_\cris} \ar[dd] \ar[dl]_{\mathrm{defrobenius}} \ar[dr]^{\partial\,\mathrm{Mod}_p} & \\
		%		\mathfrak{X}_{\cris} \ar[dddr]_{\mathrm{Mod}_p} \ar[dr]^{\partial\mathrm{Mod}_p} && X_{\crispris} \ar[dl]^{\mathrm{defrobenius}} \ar[dddl]^{\mathrm{deprism}} \\
		%		& X_{\widetilde{\cris}} \ar[dd]^{\partial\,\mathrm{Mod}_p} & \\
		%		&& \\
		%		& X_{\cris} &
		%	}.
	%\end{equation}
	The commutative diagram of canonical reduction maps between relative base $\mathsf{spaces}$—which we call the \emph{relative base reduction diagram}—already appeared in the introduction. Concretely, we refer to
	\begin{equation}\label{RelBRedDiagm}
		\text{The same diagram as in \eqref{BRedDiagmIntro}, but with } \mathfrak{X}\text{ and }X\text{ replacing }\mathcal{S}\text{ and }S\text{, respectively.}
	\end{equation}
	To avoid redundancy, we do not redraw the diagram from \eqref{BRedDiagmIntro}. Arrows labelled by \(\partial\,\mathrm{Mod}_p\) represent \emph{partial} reduction modulo \(p\) maps, and the arrow labelled by \( \mathrm{Mod}_p \) represents a \emph{full} reduction modulo \(p\) map. The maps in the diagram are clearly understood from the definitions of these \textsf{spaces}. For example,  \(\partial\mathrm{Mod}_p: \mathfrak{X} \to X_{\widetilde{\cris}}\) is given by sending \((R, x:\Spf R \to \mathfrak{X})\) to \((R, \bar{x}: \mathrm{Spec} \bar{R} \to X)\), while  \(\partial\mathrm{Mod}_p: X_{\widetilde{\cris}} \to X_{\cris}\) is given by sending \((R, \bar{x}: \mathrm{Spec} \bar{R} \to X)\) to \(\bar{x}: \mathrm{Spec} \bar{R} \to X\).
	
	As indicated in the introduction, if \(\mathrm{Red}: Y \to Z\) is one of the reduction maps in either \eqref{Eq:BaseRedMap} or \eqref{RelBRedDiagm}, each \(Z\)-\(\mathsf{space}\) \(\mathcal{F}\) can be viewed as a \(Y\)-\(\mathsf{space}\), with the restriction denoted by \(\mathcal{F}|_Y\) or \(\mathrm{Red}^*\mathcal{F}\) when there is a risk of confusion. Since $\mathrm{Red}$ is essentially surjective, viewing $\mathcal{F}$ as a $Y$-$\mathsf{space}$ does not lose information of $\mathcal{F}$.
	
	%\subsection{Spaces over relative base spaces}
	
	\begin{definition}
		Let $Y$ be one of the relative base $\mathsf{spaces} $ in \eqref{Eq: RelBasSpace}. A \emph{$\mathsf{space}$} over $Y$ is a prestack of groupoids over $Y$. Maps between $Y$-\textsf{spaces} are morphisms between prestacks over $Y$. 
	\end{definition} 
	
	\subsection{Reductions of $\Spaces$ and reductions of maps between $\Spaces$}
	We consider the following definitions as the most important ones in this article.
	
	\begin{definition}\label{Def:RedSpace}
		Let \( \mathrm{Red}: Y \to Z \) be one of the reduction maps in either \eqref{Eq:BaseRedMap} or \eqref{RelBRedDiagm}, and let \( \mathcal{F} \) be a \(\mathsf{space}\) over \( Y \). We say that \( \mathcal{F} \) is \emph{of reduction along} \( \mathrm{Red}: Y \to Z \) (by \( \mathcal{G} \)) if there exists a unique \( Z \)-\(\mathsf{space}\) \( \sG \) (up to isomorphism) and an isomorphism of \( Y \)-\(\mathsf{spaces}\) \( \mathcal{F} \cong \mathrm{Red}^* \mathcal{G} \). In this case, we will refer to \( \mathcal{G} \) as the \emph{reduction} of \( \mathcal{F} \) (along \( \mathrm{Red}: Y \to Z \)), and  identify \( \mathcal{F} \) and \( \mathcal{G} \), viewing \( \mathcal{F} \) as a \(\Space\) over \( Z \).
	\end{definition}
	
	\begin{definition}\label{Def:RedMap}
		Let \( \mathrm{Red}: Y \to Z \) be one of the reduction maps in either \eqref{Eq:BaseRedMap} or \eqref{RelBRedDiagm}, and let \( f: \mathcal{F}_1 \to \mathcal{F}_2 \) be a map of \( Y \)-\(\Spaces\). We say that \( f \) is \emph{of reduction along} \( \mathrm{Red} \) (by \( g \)) if there exist \( Z \)-\(\Spaces\) \( \mathcal{G}_1, \mathcal{G}_2 \) and a map \( g: \mathcal{G}_1 \to \mathcal{G}_2 \) such that \( \mathcal{F}_i \cong \mathrm{Red}^* \mathcal{G}_i \) for \( i = 1, 2 \), and these identifications induce a \(2\)-isomorphism \( f \simeq \mathrm{Red}^* g \). In this case, we refer to \( g \) as the \emph{reduction} of \( f \) along \( \mathrm{Red} \), and may identify \( f \) with \( g \), viewing \( f \) as a map of \( Z \)-\(\Spaces\).
	\end{definition}

	\section{Group {$\Spaces$} attached to $\mathcal{G}$}\label{S:LoopGp}
	We retain the basic setup as in \S \ref{S:BasicSetup}, and let $\sG, \tilde{\mu}: \mathbb{G}_{m, W}\to \mathcal{G}_W$ be as in \S \ref{S: HodgeCoch}.
	
	\subsection{Prismatic loop groups}\label{S:PrisLoopGrp}
	Let $H$ be an affine smooth group scheme over $W$. We set $H_{\prism}$, $H_{\tilde{\prism}}=\mathcal{L}_{\prism}^+H$, and $\mathcal{L}_\prism H=H_{\tilde{\prism}[1/I_{\prism}]}$, and $\mathcal{L}^1_{\prism}H$ for the presheaves over $W_{\prism}$, by associating $(A, I)$ with the set $H(A/I)$, $H(A)$, $H(A[1/I_A])$, and the kernel of the canonical reduction map $\mathcal{L}_{\prism}^+H \to H_{\prism}$ respectively. In fact, they are group \emph{sheaves} over $W_{\prism}$ in the flat topology, as can derived from the fact that $\mathcal{O}_{\mathfrak{X}_\prism}$, $\overline{\mathcal{O}}_{\mathfrak{X}_{\prism}}$, and $\mathcal{O}_{\mathfrak{X}_{\prism}}[1/I_{\prism}]$ are sheaves. We refer to $\mathcal{L}_{\prism}^+H$ as the \emph{positive prismatic loop group of $H$}, and $\mathcal{L}_{\prism} H$ as the \emph{prismatic loop group of $H$}.  
	
	We shall write $H_{\prism_{\BK}}$, $\mathcal{L}_{\BKprism}^+ H =H_{\tilde{\prism}_{\BK}}$, $\mathcal{L}_{\BKprism}H=H_{\tilde{\prism}_{\BK}[1/E]}$, and $\mathcal{L}^1_{\BKprism}H$ for their restriction on $W_{\prism_{\BK}}$, and $H_{\prism_{\cris}}$, $\mathcal{L}_{\crispris}^+ H =H_{\tilde{\prism}_{\cris}}$, $\mathcal{L}_{\crispris}H=H_{\tilde{\prism}_{\cris}[1/p]}$, and $\mathcal{L}^1_{\crispris}H$ for their restriction on $W_{\prism_{\cris}}$ respectively. Similarly, we write $H_{\BKcrispris}, \mathcal{L}^+_{\BKcrispris}H$, $\mathcal{L}_{\BKcrispris}H$ for the pullback of $H_{\prism_{\BK}}$, $\mathcal{L}_{\BKprism}^+ H$, $\mathcal{L}_{\BKprism}H$ along the map $\iota: W_{\crispris}\to W_{\BKprism}$ between base $\Spaces$\footnote{Warning: here we do not define $\mathcal{L}^1_{\BKcrispris}H$ as the pullback along $\iota$ of $\mathcal{L}^1_{\BKprism}H$. Its definition will be given in \S~\ref{S:RedofBKPerMap}.}.
	
	The following lemma is immediate, so we omit its proof.
	\begin{lemma}
		The $W_{\BKprism}$-$\Space$ $H_{\BKprism}$ is of reduction along the composition $ W_{\BKprism}\xrightarrow{\pi} W_{\crispris}\xrightarrow{\pi_{\crispris}} W_{\cris}$. The $W_{\crispris}$-$\Space$ $H_{\tilde{\prism}_{\cris}}$ and $H_{\tilde{\prism}_{\cris}[1/p]}$ are of reduction along the canonical map $W_{\crispris}\to W_{\cris}$. Moreover, the $W_{\crispris}$-$\Space$ $H_{\crispris}$ is of reduction along the composition $W_{\crispris}\xrightarrow{\pi_{\crispris}}W_{\cris}\xrightarrow{\pi_{\cris}}\kappa_{\cris}$.  
	\end{lemma}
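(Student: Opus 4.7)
The approach is to identify, for each of the four claims, an explicit reduction space and verify the defining isomorphism $\mathcal{F} \cong \mathrm{Red}^* \mathcal{G}$ by unwinding formulas. Three of the four claims reduce to essentially formal checks, while the claim for $H_{\BKprism}$ requires a descent argument along the covering of $W_{\BKprism}$ by Breuil-Kisin prisms.

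First, for $H_{\tilde{\prism}_{\cris}} = \mathcal{L}^+_{\crispris} H$ and $H_{\tilde{\prism}_{\cris}[1/p]} = \mathcal{L}_{\crispris} H$, I propose the $W_{\cris}$-space reductions $\mathcal{L}^+_{\cris} H: R \mapsto H(R)$ and $\mathcal{L}_{\cris} H: R \mapsto H(R[1/p])$, respectively. Since the defining formulas manifestly depend only on the underlying flat $W$-algebra $R$ and not on the Frobenius lift $\varphi$, the isomorphisms $H_{\tilde{\prism}_{\cris}} \cong \pi_{\crispris}^* \mathcal{L}^+_{\cris} H$ and $H_{\tilde{\prism}_{\cris}[1/p]} \cong \pi_{\crispris}^* \mathcal{L}_{\cris} H$ follow tautologically from the description $\pi_{\crispris}^* \mathcal{G}(R, \varphi) = \mathcal{G}(R)$. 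For $H_{\crispris}$, the formula $(R, \varphi) \mapsto H(R/pR)$ depends only on the $\kappa$-algebra $\bar{R} = R/pR$, so the reduction is the $\kappa_{\cris}$-space $\bar{R} \mapsto H(\bar{R})$, and the identification $H_{\crispris} \cong (\pi_{\cris} \circ \pi_{\crispris})^* H_{\kappa_{\cris}}$ is again a direct check from definitions.

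For the more substantial case of $H_{\BKprism}$, the candidate reduction is again $\mathcal{L}^+_{\cris} H$; equivalently, I would first show $H_{\BKprism}$ is of reduction along $\pi$ by $H_{\tilde{\prism}_{\cris}}$ and then compose with the previous case. The pullback $\pi^* H_{\tilde{\prism}_{\cris}}$ sends $(A, E)$ to $H(A/tA)$, while $H_{\BKprism}(A, E) = H(A/E)$; on a Breuil-Kisin prism $\underline{\mathfrak{S}_R} = (R[[t]], E)$ attached to a small crystalline $W$-algebra $R$, both $\mathfrak{S}_R/E$ and $\mathfrak{S}_R/t$ canonically identify with $R$ (the first via the substitution $t = -p$, made convergent by the $p$-adic completeness of $R$; the second via $t = 0$), so both sheaves agree there. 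The main obstacle is to extend this pointwise agreement from the Breuil-Kisin subsite to the entire site by flat descent: invoking the Imai--Kato--Youcis covering statement that Breuil-Kisin prisms cover the final object of $\mathbf{Shv}(W_{\BKprism})$, and the fact that both sides are sheaves in the flat topology, the identification propagates provided that the descent data on fiber products of Breuil-Kisin covers is compatible; this in turn should follow because such fiber products are again $(p, t)$-adically complete $\mathfrak{S}$-algebras whose $/E$ and $/t$ reductions canonically coincide, allowing both cosimplicial equalizer computations to proceed identically.
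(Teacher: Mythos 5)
Your handling of the three crystalline assertions is correct and matches the paper's implicit reasoning (the paper declares the lemma ``requires no proof''): the formulas $\mathcal{L}^{+}_{\crispris}H(\underline{R}) = H(R)$, $\mathcal{L}_{\crispris}H(\underline{R}) = H(R[1/p])$ and $H_{\crispris}(\underline{R}) = H(R/pR)$ visibly depend only on $R$, resp.\ on $\bar R$, so the three sheaves are pullbacks of the evident $W_{\cris}$- and $\kappa_{\cris}$-spaces along $\pi_{\crispris}$ and $\pi_{\cris}\circ\pi_{\crispris}$, exactly as you say.

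Your treatment of the first assertion, about $H_{\BKprism}$, is where the proposal goes wrong, and where I think you are right to sense that something non-trivial is happening even though the paper presents it as a tautology. You need $H(A/EA)$ to depend only on $A/tA$ for an \emph{arbitrary} prism $(A,EA)$ over $\underline{\mathfrak{S}}$, and your plan is to observe the agreement on Breuil--Kisin prisms $\underline{\mathfrak{S}_R}$ and propagate by flat descent using the Imai--Kato--Youcis covering statement. This fails on two counts. First, for $\mathfrak{X}=\Spf W$ the only Breuil--Kisin prism in the sense of \S\,\ref{S: BKPrism} is $\underline{\mathfrak{S}}$ itself, which \emph{represents} the final object of $\mathbf{Shv}(W_{\BKprism})$; the covering statement is therefore vacuous here and does not give you a flat cover of a general object of $W_{\BKprism}$ by prisms of the form $\underline{\mathfrak{S}_R}$ with $R\in W_{\cris}$. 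Second, and more seriously, the pointwise identification $A/EA\cong A/tA$ that you rely on genuinely fails off the Breuil--Kisin locus. Take $A=W[[s]]$ with $\varphi_A(s)=s^p$ and $\mathfrak{S}$-algebra structure $t\mapsto s^2$ (so $E\mapsto s^2+p$); this is a bounded $p$-torsion-free prism over $\underline{\mathfrak{S}}$, yet $A/EA = W[s]/(s^2+p)$ is a totally ramified DVR extension of $W$ while $A/tA = W[s]/(s^2)$ has nilpotents, so the two rings are not isomorphic and $H$ has no reason to take the same value on them. Replacing $t\mapsto s^2$ by $t\mapsto -s^2$ even changes $A/EA$ to $W[s]/(s^2-p)$ while leaving $A/tA$ fixed, so a putative reduction $\mathcal{G}$ would have to reconcile $H$-values on two genuinely different rings at a single point of $W_{\cris}$. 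What is actually used in the sequel (Lemma~\ref{Lem:BKLpGpRed}) is the statement after pullback along $\iota':W_{\crispris}\to W_{\BKprism}$, $\underline{R}\mapsto\underline{\mathfrak{S}_R}$, where $\mathfrak{S}_R/E$ and $\mathfrak{S}_R/t$ are both canonically $R$ and the claim really is the tautology you describe for the other three parts; if you want to prove the first assertion as literally stated you would need to supply an argument that neither your descent sketch nor the paper's ``requires no proof'' provides, or else restrict its scope to $H_{\BKcrispris}=\iota'^*H_{\BKprism}$.
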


	Let $\widehat{H}$ denote the $p$-adic completion of $H$, which is $p$-adic formal (group) scheme over $\Spf W$. Then the prismatizations $\hat{H}_{\tilde{\prism}_{\cris}}$ and $\hat{H}_{\crispris}$ of $\hat{H}$ defined in \S \ref{S:Prismatization} and the sheaves $H_{\tilde{\prism}_{\cris}}$  and $H_{\prism_{\cris}}$ defined above can be identified as $W_{\crispris}$-$\Spaces$. Indeed, for each crystalline prism $\underline{A}$ we have the identification $\mathrm{Hom}_{\Spf W}(\Spf A, \widehat{H}) = \mathrm{Hom}_{\mathrm{Spec} W}(\mathrm{Spec} A, H)$ as $A$ is $p$-adically complete. 
	
	From now on, we consider the case $H=\sG=\sG_{W}$, the base change to $W$ of a $\Zp$-reductive group scheme $\mathcal{G}$ as in \S\ref{S: HodgeCoch}.

	\subsection{The {$\Spaces$} {$\mathbf{C}^{\tilde{\mu}}_{\tilde{\prism}_\cris}$} and {$\mathbf{C}^{\tilde{\mu}}_{\BKcrispris}$} over {$W_{\crispris}$}}
	Denote by $(\mathbf{C}_{\tilde{\prism}_{\BK}}^{\tilde{\mu}})^{\mathrm{p}}$ the presheaf on $W_{\prism_{\BK}}$ given by associating each object $(A, EA)$ in $W_{\prism_{\BK}}$ the subset 
	\(
	\mathcal{G}(A)\tilde{\mu}(E)\mathcal{G}(A)
	\) of $\sG_{\tilde{\prism}_{\BK}[1/E]}(\underline{A})=\mathcal{G}(A[1/E])$, 
	and $\mathbf{C}_{\tilde{\prism}_{\BK}}^{\tilde{\mu}}$ its sheafification in the flat topology. We have the following inclusions of \( W_{\BKprism} \)-\(\Spaces\),
	\begin{equation}\label{Eq:BKCellmutil}
		(\mathbf{C}_{\tilde{\prism}_{\BK}}^{\tilde{\mu}})^{\mathrm{p}}\subseteq \mathbf{C}_{\tilde{\prism}_{\BK}}^{\tilde{\mu}}\subseteq \mathcal{L}_{\BKprism}\mathcal{G}.
	\end{equation}
	
	Applying $\iota_0^*$ in \eqref{Eq: ResToCrisPris} and $\iota^*$ in \eqref{Eq: BKResCris2} to \eqref{Eq:BKCellmutil} yields the following inclusions of $W_{\crispris}$-$\Spaces$:
	\[
	(\mathbf{C}_{\tilde{\prism}_{\cris}}^{\tilde{\mu}})^{\mathrm{p}}
	\subseteq
	\mathbf{C}_{\tilde{\prism}_{\cris}}^{\tilde{\mu}}
	\subseteq
	\mathcal{L}_{\crispris}\mathcal{G},
	\qquad
	(\mathbf{C}_{\prism_{\cris}^{\BK}}^{\tilde{\mu}})^{\mathrm{p}}
	\subseteq
	\mathbf{C}_{\prism_{\cris}^{\BK}}^{\tilde{\mu}}
	\subseteq
	\mathcal{L}_{\BKcrispris}\mathcal{G}.
	\]

	The next lemma will be used in our application later. 
	\begin{lemma}  \label{Lem: BKFrob}
		Given $\underline{A}\in W_{\prism_{\BK}}$ and $g\in \mathcal{L}_{\BKprism}\mathcal{G}(\underline{A})$. If there exists a flat cover $\underline{A}\to \underline{B}$ such that the image of $g$ in $\mathcal{L}_{\BKprism}\mathcal{G}(\underline{B})$ lies in $\mathbf{C}^{\tilde{\mu}}_{\tilde{\prism}_{\BK}}(\underline{B})$, we must have $g\in \mathbf{C}_{\tilde{\prism}_{\BK}}^{\tilde{\mu}}(\underline{A})$. 
	\end{lemma}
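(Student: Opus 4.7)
The plan is to reduce the statement to a standard fact about sheafification of a sub-presheaf inside an ambient sheaf. Specifically, if $\mathcal{F} \subseteq \mathcal{G}$ is a sub-presheaf of a sheaf $\mathcal{G}$ on any site, the sheafification $\mathcal{F}^{\mathrm{sh}}$ sits inside $\mathcal{G}$ as the subsheaf whose sections over an object $\underline{A}$ are precisely those $g \in \mathcal{G}(\underline{A})$ for which there exists \emph{some} cover $\underline{A} \to \underline{B}$ with $g|_{\underline{B}} \in \mathcal{F}(\underline{B})$. This description is itself a sheaf because composition of covers is a cover. Applied to $\mathcal{G} = \mathcal{L}_{\BKprism}\mathcal{G}$ and $\mathcal{F} = (\mathbf{C}^{\tilde{\mu}}_{\tilde{\prism}_{\BK}})^{\mathrm{p}}$, this gives the explicit characterization
\[
\mathbf{C}^{\tilde{\mu}}_{\tilde{\prism}_{\BK}}(\underline{A}) \;=\; \bigl\{\, g \in \mathcal{L}_{\BKprism}\mathcal{G}(\underline{A}) \,:\, \exists\ \text{flat cover } \underline{A} \to \underline{C}\ \text{with}\ g|_{\underline{C}} \in \mathcal{G}(C)\tilde{\mu}(E)\mathcal{G}(C)\,\bigr\}.
\]

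With this description in hand, the lemma is immediate. Suppose $g \in \mathcal{L}_{\BKprism}\mathcal{G}(\underline{A})$ and $\underline{A} \to \underline{B}$ is a flat cover such that $g|_{\underline{B}} \in \mathbf{C}^{\tilde{\mu}}_{\tilde{\prism}_{\BK}}(\underline{B})$. By the characterization above applied to $\underline{B}$, there is a flat cover $\underline{B} \to \underline{C}$ with $g|_{\underline{C}} \in \mathcal{G}(C)\tilde{\mu}(E)\mathcal{G}(C) = (\mathbf{C}^{\tilde{\mu}}_{\tilde{\prism}_{\BK}})^{\mathrm{p}}(\underline{C})$. The composition $\underline{A} \to \underline{C}$ is then a flat cover in $W_{\BKprism}$ (here one uses Proposition~\ref{Prop: ProduPrism} and the fact that compositions of $(p,I)$-adically faithfully flat, quasi-compact maps of $p$-adic formal spectra remain so), which witnesses that $g$ lies in $\mathbf{C}^{\tilde{\mu}}_{\tilde{\prism}_{\BK}}(\underline{A})$.

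The only real content to verify is the standard description of the sheafification inside an ambient sheaf, which I would record as a brief preliminary remark (or cite as a standard fact on the plus construction). The main obstacle, if any, is purely bookkeeping: confirming that the site $W_{\BKprism}$ as topologized in \S\ref{S:BSetupPrism} admits composition of flat covers, which is immediate from the definition via $p$-adically faithfully flat, quasi-compact morphisms of formal schemes. No calculation specific to the double coset $\mathcal{G}(A)\tilde{\mu}(E)\mathcal{G}(A)$ is used; the statement is entirely formal in the sheaf-theoretic structure of $\mathbf{C}^{\tilde{\mu}}_{\tilde{\prism}_{\BK}}$.
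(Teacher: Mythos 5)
Your proof is correct, and the essential idea is the same as the paper's: the lemma holds because $\mathbf{C}^{\tilde{\mu}}_{\tilde{\prism}_{\BK}}$ is a subsheaf of the sheaf $\mathcal{L}_{\BKprism}\mathcal{G}$. The paper's proof is terser and more direct: for any subsheaf $\mathcal{F}$ of a sheaf $\mathcal{G}$, an element $g \in \mathcal{G}(\underline{A})$ whose restriction along a cover $\underline{A} \to \underline{B}$ lies in $\mathcal{F}(\underline{B})$ must already lie in $\mathcal{F}(\underline{A})$ (since $g|_{\underline{B}}$ satisfies the descent datum inside $\mathcal{G}$, hence inside $\mathcal{F}$, and glues to an element of $\mathcal{F}(\underline{A})$ that agrees with $g$ by separatedness of $\mathcal{G}$). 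You instead unfold the plus-construction to give an explicit characterization of the sheafification $\mathbf{C}^{\tilde{\mu}}_{\tilde{\prism}_{\BK}}$ inside the ambient sheaf and then compose covers. Both arguments are formal and correct; your route is a bit longer but makes the content of ``subsheaf obtained by sheafification'' visible, which the paper leaves implicit. One small caveat in your write-up: to see that your explicit description is a sheaf (and that single-morphism covers suffice), one must be able to replace a covering family by a single cover (e.g.\ by taking a product/refinement), which holds here by quasi-compactness of the topology on $W_{\BKprism}$ but should be noted; this does not, however, affect the deduction of the lemma itself.
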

	\begin{proof}
		This follows from the fact that $\mathcal{L}_{\BKprism}\mathcal{G}$ is a sheaf and $\mathbf{C}_{\tilde{\prism}_{\BK}}^{\tilde{\mu}}\subseteq \mathcal{L}_{\BKprism}\mathcal{G}$ is a subsheaf. 
	\end{proof}
	
	Since our reductive group $\sG$ is defined over $\Zp$, for each object $(A, I)$ in $W_{\prism}$, the Frobenius map $\varphi_A: A \to A$ is a $\Zp$ homomorphism. Thus, the Frobenius $\varphi: \mathcal{O}_{\prism} \to \mathcal{O}_{\prism}$ induces the Frobenius map
	\(
	\varphi: \mathcal{L}_{\prism}\mathcal{G} \to \mathcal{L}_{\prism}\mathcal{G}. 
	\)
	There is a natural $\varphi$-conjugation action of $\mathcal{L}^+_{\BKprism}\mathcal{G}$ on $\mathcal{L}_{\BKprism}\mathcal{G}$:
	\[
	\mathcal{L}_{\BKprism}\mathcal{G}\times \mathcal{L}^+_{\BKprism}\mathcal{G}\to \mathcal{L}_{\BKprism}\mathcal{G}, \quad h\cdot g= g^{-1}h\varphi(g),
	\]
	and denote by $[\mathcal{L}_{\BKprism}\mathcal{G}/\mathrm{Ad}_{\varphi} \mathcal{L}^+_{\BKprism}\mathcal{G}]$ the resulting quotient stack over $W_{\BKprism}$. 
	Clearly,  $(\mathbf{C}_{\tilde{\prism}_{\BK}}^{\tilde{\mu}})^{\mathrm{p}}$, and hence also $\mathbf{C}^{\tilde{\mu}}_{\tilde{\prism}_{\BK}}$, are stable under the $\varphi$-conjugation action of $\mathcal{L}^+_{\BKprism}\mathcal{G}$ on $\mathcal{L}_{\BKprism}\mathcal{G}$. In particular,  we have an inclusion of stacks over $W_{\BKprism}$, \([\mathbf{C}_{\tilde{\prism}_\BK}^{\tilde{\mu}}/\mathrm{Ad}_{\varphi}\mathcal{L}^+_{\prism_\BK}\mathcal{G}]\subseteq [\mathcal{L}_{\prism_\BK}\mathcal{G}/\mathrm{Ad}_{\varphi} \mathcal{L}^+_{\prism_\BK}\mathcal{G}].  
	\)
	Similarly, we have the following inclusion of stacks over $W_{\crispris}$,
	\begin{align}[\mathbf{C}_{\tilde{\prism}_{\cris}}^{\tilde{\mu}}/\mathrm{Ad}_{\varphi}\mathcal{L}^+_{\crispris}\mathcal{G}]\subseteq [\mathcal{L}_{\crispris}\mathcal{G}/\mathrm{Ad}_{\varphi} \mathcal{L}^+_{\crispris}\mathcal{G}], \quad [\mathbf{C}_{\BKcrispris}^{\tilde{\mu}}/\mathrm{Ad}_{\varphi}\mathcal{L}^+_{\BKcrispris}\mathcal{G}]\subseteq [\mathcal{L}_{\BKcrispris}\mathcal{G}/\mathrm{Ad}_{\varphi} \mathcal{L}^+_{\BKcrispris}\mathcal{G}].
	\end{align}

	%\begin{remark}
	% Let $R=R_0$ be a small formal $W$-algebra as in \S \ref{S: SmallFormal}, together with a formal framing $\iota: W\langle T_1^{\pm}, \ldots, T_n^{\pm} \rangle\to R$ and $\underline{\mathfrak{S}_R}$ the associated Breuil-Kisin prism. Recall that for a $p$-adic \'etale morphism $R\to R'$, there exists a unique prism structure $\underline{\mathfrak{S}_{R'}}$ on $\mathfrak{S}_{R'}=R'[[t]]$ along with a morphism of prisms $\underline{\mathfrak{S}_{R}}\to \underline{\mathfrak{S}_{R'}}$ (Proposition \ref{Prop: ProduPrism}). Let $g\in \mathbf{C}^{\tilde{\mu}}_{\BKcrispris}(\underline{R})$. Then it follows from \cite[Corollary 4.9]{IKYPrisReal} that there exists a $p$-adic \'etale covering $ R\to R' $ such that the image of $g$ in  $\mathcal{L}_{\BKcrispris}\mathcal{G}(\underline{R'})=\sG(\mathfrak{S}_{R'}[1/E])$ lies in $(\mathbf{C}^{\tilde{\mu}}_{\BKcrispris})^{\mathrm{p}}(\underline{R'})$.
	%\end{remark}
	
	\subsection{Breuil-Kisin loop groups}\label{S:BKLpGp} 
	Define the sheaves $\mathcal{L}_{\BK}^+\mathcal{G}$ and $\mathcal{L}_{\BK}\mathcal{G} $ over $(\mathbf{Alg}/W)$ as follows: $\mathcal{L}_{\BK}^+\mathcal{G}(R) = \mathcal{G}(\mathfrak{S}_R)$ and $\mathcal{L}\mathcal{G}(R) = \mathcal{G}(\mathfrak{S}_R[1/E])$. Define $\mathcal{L}_{\BK}^1\mathcal{G}$ to be the kernel of the canonical reduction modulo $E$ map $\mathcal{L}_{\BK}^+\mathcal{G} \to \mathcal{G}$, and let $\mathbf{C}_{\BK}^{\tilde{\mu}} = \mathbf{C}_{\BK}^{\tilde{\mu}(E)}$ be the subsheaf of $\mathcal{L}_{\BK}\mathcal{G}$ on $(\mathbf{Alg}/W)$ defined by the sheafification of the functor
	\( R\mapsto \mathcal{L}^+_{\BK}\mathcal{G}(R) \tilde{\mu}(E) \mathcal{L}^+_{\BK}\mathcal{G}(R)\). We use the same notions to denote the restriction to $W_{\cris}\subseteq (\mathbf{Alg}/W)$ of these sheaves defined above.  We refer to $\mathcal{L}_{\BK} \mathcal{G}$ as the \emph{Breuil-Kisin loop group} of $\mathcal{G}$ and $\mathcal{L}^+_{\BK} \mathcal{G}$ as the \emph{positive Breuil-Kisin loop group} of $\mathcal{G}$.
	
	The next lemma and its analogue, Lemma \ref{Lem:CrisLpRed}, are immediate; proofs are omitted.
	\begin{lemma} \label{Lem:BKLpGpRed}The $W_{\crispris}$-$\Spaces$  $\mathbf{C}_{\prism_{\cris}^{\BK}}^{\tilde{\mu}}$ is of reduction by $\mathbf{C}^{\tilde{\mu}}_{\BK}$ along the the defrobenius map $W_{\crispris}\to W_{\cris}$. For each $\epsilon\in \{+, \emptyset, 1\}$, $\mathcal{L}^{\epsilon}_{\BKcrispris}\mathcal{G}$ is of reduction  by $\mathcal{L}_{\BK}^{\epsilon}\mathcal{G}$ along $W_{\crispris}\to W_{\cris}$.
	\end{lemma}
	
	\subsection{Crystalline loop groups}
	Define the sheaves $\mathcal{L}_{\cris}^+\mathcal{G}$ and $\mathcal{L}_{\cris}\mathcal{G} $ over $(\mathbf{FlatAlg}/W)$ \footnote{The flatness restriction is only needed for the definition of $\mathcal{L}_{\cris}\mathcal{G}$ where one needs to invert $p$.} as the following functors,
	\(
	R\mapsto \mathcal{G}(R)\) and \( R\mapsto \mathcal{G}(R[1/p])
	\)
	respectively. Define $\mathcal{L}^1_{\cris}\mathcal{G}$ to be the kernel of the canonical reduction modulo $p$ map \(
	\mathrm{Red}: \mathcal{L}_{\cris}^+\mathcal{G} \to \mathrm{Res}_{\kappa/W}G
	\). Define the subsheaf $\mathbf{C}^{\tilde{\mu}}_{\cris}\subseteq \mathcal{L}_{\cris}\mathcal{G}$ to be the sheafification of the presheaf 
	\(R \mapsto  \mathcal{L}_{\cris}^+\mathcal{G}(R) \tilde{\mu}(p) \mathcal{L}_{\cris}^+\mathcal{G}(R)\). We use the same notions to denote the restriction to $W_{\cris}\subseteq (\mathbf{FlatAlg}/W)$ of these sheaves defined above. We refer to $\mathcal{L}_{\cris} \mathcal{G}$ as the \emph{crystalline loop group} of $\mathcal{G}$ and $\mathcal{L}^+_{\cris} \mathcal{G}$ as the \emph{positive crystalline loop group} of $\mathcal{G}$.
	
	We need to explain our seemingly strange definition of $\mathcal{L}_{\cris}^+\mathcal{G}$. For this, let us consider the embedding from 
	\(
	(\mathbf{PerfAlg}/\kappa)\) to $W_{\cris}$ given by \(\bar{R}\mapsto W(\bar{R}).
	\)
	Then the restriction to $ (\mathbf{PerfAlg}/\kappa)$ of $\mathcal{L}_{\cris}^+ \mathcal{G}$ resp. $\mathcal{L}_{\cris}\mathcal{G}$ becomes the classical mixed characteristic positive loop group resp. loop group attached to $\mathcal{G}$. 
	
	The next lemma is parallel to Lemma \ref{Lem:BKLpGpRed} and requires no proof.
	\begin{lemma} \label{Lem:CrisLpRed}
		The $W_{\crispris}$-$\Spaces$ $\mathbf{C}_{\tilde{\prism}_{\cris}}^{\tilde{\mu}}$ is of reduction by $\mathbf{C}^{\tilde{\mu}}_{\cris}$ along the defrobenius map $W_{\crispris}\to W_{\cris}$. For each $\epsilon\in \{+, \emptyset, 1\}$, $\mathcal{L}^{\epsilon}_{\tilde{\prism}_\cris}\mathcal{G}$ is of reduction by $\mathcal{L}_{\cris}^{\epsilon}\mathcal{G}$ along $W_{\crispris}\to W_{\cris}$.
	\end{lemma}

	\section{A representability result}\label{S:RepProof}
	In this section we let $\mathcal{G}, G=\mathcal{G}_\kappa$, $\tilde{\mu}:\mathbb{G}_{m, W}\to \mathcal{G}$ and $\mu=\tilde{\mu}\otimes_W\kappa$ be as in \S \ref{S: HodgeCoch}. 
	
	Recall that the loop group $\mathcal{L}G$ and the positive loop group $\mathcal{L}^+G$ of $G$ are the group functors on $(\mathbf{Alg}/\kappa)$, given by 
	\( R\mapsto G(R((t))\) and \( R\mapsto G(R[[t]]) \)
	respectively. Set $\mathcal{L}^{1}G$ to be the kernel of the canonical projection $\mathcal{L}^+G\to G$, and $\mathcal{C}^{\mu}\subseteq \mathcal{L}G$ to be the sheafification of the presheaf 
	\[
	R\mapsto \mathcal{L}^+G(R) \mu(t) \mathcal{L}^+G(R)\subseteq \mathcal{L}G,
	\]
	and set
	$ \prescript{}{1}{\mathcal{C}^{\mu}_1}:= \mathcal{L}^1G \backslash \mathcal{C}^{\mu}/\mathcal{L}^1G$ to be the (set-valued) quotient sheaf of $\mathcal{C}^\mu$ by the actions of $\mathcal{L}^1 G\times \mathcal{L}^1G$ by  multiplication on the left and right componentwise. Set $\mathsf{E}_{\mu}\subseteq P_-\times P_+ $ to be the zip group attached to $\mu$ consisting of elements $(p_-, p_+)$ in $P_-\times P_+$ such that $p_-$ and $p_+$ share the same Levi component (see \cite[Definition 4.1]{Yan23zip}), and consider the action of $\mathsf{E}_{\mu}$ on $G$ given by $(g, h)\cdot (p_-, p_+)\mapsto (p_-g, p_+ h)$. Denote by $\mathsf{E}_{\mu}\backslash G^2$ the resulting quotient scheme. 
	Recall:
	\begin{theorem}[{\cite[Theorem 4.11]{Yan23zip}}]
		We have a canonical isomorphism of sheaves over $(\mathbf{Alg}/k)$:
		\[\prescript{}{1}{\mathcal{C}_1^{\mu}} \cong \mathsf{E}_{\mu}\backslash G^2, \quad g \mu(t) h \mapsto \big(\bar{g}^{-1}, \bar{h}\big),\]
		where $\bar{g} \in G(R)$ denotes the image in $G(R)$ of $g\in \mathcal{L}^+G(R)$ along the projection $\mathcal{L}^+G \to G$. 
	\end{theorem}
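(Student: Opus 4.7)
The plan is to realize $\prescript{}{1}{\mathcal{C}_1^{\mu}}$ as a double quotient of $\mathcal{L}^+G \times \mathcal{L}^+G$ and to match this with $\mathsf{E}_\mu \backslash G^2$ via a stabilizer computation at the base point $\mu(t) \in \mathcal{C}^\mu$. Both well-definedness of $g\mu(t)h \mapsto (\bar{g}^{-1}, \bar{h})$ and its bijectivity will emerge as consequences of this identification.

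First I would consider the orbit map $\alpha : \mathcal{L}^+G \times \mathcal{L}^+G \twoheadrightarrow \mathcal{C}^\mu$, $(g, h) \mapsto g\mu(t)h$, equipped with the right action $(g, h) \cdot (\beta, \gamma) = (g\beta, \gamma^{-1}h)$. Its stabilizer at the base point is
\[ S \ =\ \{(\beta, \gamma) \in \mathcal{L}^+G \times \mathcal{L}^+G : \beta\mu(t) = \mu(t)\gamma \} \ =\ \{ (\mu(t)\gamma\mu(t)^{-1}, \gamma) : \gamma \in \mathcal{L}^+G,\ \mu(t)\gamma\mu(t)^{-1} \in \mathcal{L}^+G\}. \]
At the (pre)sheaf level this yields $\mathcal{C}^\mu \cong (\mathcal{L}^+G \times \mathcal{L}^+G)/S$. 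Quotienting further by $\mathcal{L}^1G$ on the left of the first factor and on the right of the second identifies $\prescript{}{1}{\mathcal{C}_1^{\mu}}$ with $(G \times G)/\bar S$, where $\bar S \subseteq G \times G$ is the image of $S$ under reduction modulo $t$. Transferring via $(x, y) \mapsto (x^{-1}, y)$ converts the right action of $\bar S$ into a left action on $G^2$ by pointwise left multiplication, and a direct inspection confirms that the induced map $\prescript{}{1}{\mathcal{C}_1^\mu} \to \bar S \backslash G^2$ carries $g\mu(t)h$ to $(\bar g^{-1}, \bar h)$.

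The heart of the argument is then to show $\bar S = \mathsf{E}_\mu$. Using that $\mu$ is minuscule, we have $\mathbf{Lie}\,G = \mathfrak{g}_{-1} \oplus \mathfrak{g}_0 \oplus \mathfrak{g}_1$, and $\mathrm{Ad}(\mu(t))$ scales $\mathfrak{g}_i$ by $t^i$. Working locally in the open Bruhat cell $U_- \cdot L \cdot U_+ \subseteq G$, an element $\gamma \in \mathcal{L}^+G$ satisfies $\mu(t)\gamma\mu(t)^{-1} \in \mathcal{L}^+G$ precisely when its $U_-$-component modulo $t$ is trivial, i.e.\ $\bar\gamma \in P_+$; a weight-by-weight expansion then shows $\overline{\mu(t)\gamma\mu(t)^{-1}} \in P_-$ with the same Levi component as $\bar\gamma$, so $(\bar\beta, \bar\gamma) \in \mathsf{E}_\mu$. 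Conversely, given $(p_-, p_+) \in \mathsf{E}_\mu$ with common Levi $\ell$ and unipotent parts $u_\pm \in U_\pm$, one lifts $\gamma = \ell u_+ + t\,\tilde u_- + O(t^2)$ in suitable coordinates with $\tilde u_- \in \mathfrak{g}_{-1}$ chosen so that $\overline{\mu(t)\gamma\mu(t)^{-1}} = \ell u_-$; this is possible because $\mathrm{Ad}(\mu(t))$ sends $t\mathfrak{g}_{-1}$ isomorphically onto $\mathfrak{g}_{-1}$.

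The main obstacle is this surjectivity step. The minuscule hypothesis is essential here: it ensures $\mathrm{Ad}(\mu(t))$ has only the weights $-1, 0, 1$, so the prescribed $u_-$ can be produced by a single linear correction to $\gamma$ with no cross-weight obstructions and no delicate higher-order cancellations. Once $\bar S = \mathsf{E}_\mu$ is established, well-definedness of the map on $\prescript{}{1}{\mathcal{C}_1^{\mu}}$ is immediate from $\overline{u_- g} = \bar g$ and $\overline{hu_+} = \bar h$ for $u_\pm \in \mathcal{L}^1G$, and the identification with $\mathsf{E}_\mu \backslash G^2$ via $g\mu(t)h \mapsto (\bar g^{-1}, \bar h)$ then follows directly from the construction.
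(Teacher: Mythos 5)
Your approach — realize $\mathcal{C}^\mu$ as an orbit with stabilizer $S$ under the double $\mathcal{L}^+G$-action, descend modulo $\mathcal{L}^1G$ on each side, and identify $\bar{S}$ with $\mathsf{E}_\mu$ via a weight computation in the open cell — is essentially the one the paper indicates: Theorem~\ref{Thm:ExtdRepthm} is stated to be proved ``by the same approach'' as the cited result, with the key ingredients isolated in Lemma~\ref{Lem:KeyLemma}, which packages exactly your two facts, namely the containment ${}^{\pm}\mathrm{H}\subseteq\mathrm{H}^{\pm}$ (your ``$\bar\gamma\in P_+$'') and the existence of lifts landing in $\mathrm{E}_{\tilde{\mu}}$ (underpinning your surjectivity step). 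So your outline is sound and correctly structured.

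The one substantive misstatement is the claim that ``the minuscule hypothesis is essential here.'' The remark following Lemma~\ref{Lem:KeyLemma} explicitly notes that the lemma holds without assuming $\tilde{\mu}$ minuscule, and your own lifting construction extends without difficulty to general $\mu$: write $u_-=\exp\bigl(\sum_i X_{-i}\bigr)$ with $X_{-i}\in\mathfrak{g}_{-i}$ and set $\gamma=\ell\,u_+\exp\bigl(\sum_i t^iX_{-i}\bigr)$; then $\mathrm{Ad}(\mu(t))$ carries each $t^i\mathfrak{g}_{-i}$ isomorphically onto $\mathfrak{g}_{-i}$, so $\overline{\mu(t)\gamma\mu(t)^{-1}}=\ell\,u_-$ and there is no cross-weight obstruction — the correction just happens at order $t^i$ for the weight-$(-i)$ piece rather than at a single order. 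The minuscule hypothesis is inherited from the Shimura datum and is used elsewhere in the paper (Hodge filtration and Dieudonné-theoretic arguments), but it is not what makes this surjectivity work, so that sentence should be softened.
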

	
	Set $\prescript{}{1}{\mathbf{C}_1^{\tilde{\mu}}}$ to be the sheafification of the presheaf $(\prescript{}{1}{\mathbf{C}_1^{\tilde{\mu}}})^{\mathrm{p}}$ on $(\mathbf{FlatAlg}/W)$ given by 
	\[
	R \mapsto \doublecoset{\mathcal{L}_{\cris}^1\mathcal{G}(R)}{\mathcal{L}_{\cris}^+\mathcal{G}(R)\tilde{\mu}(p)\mathcal{L}_{\cris}^+\mathcal{G}(R)}{\mathcal{L}_{\cris}^1\mathcal{G}(R)}, 
	\]
	and write $(\prescript{}{1}{\mathbf{C}_1^{\tilde{\mu}}})_{\cris}$ for its restriction on $W_{\cris}\subseteq (\mathbf{FlatAlg}/W)$. The following theorem is an enhancement of \cite[Theorem 4.11]{Yan23zip}, where the functor $\prescript{}{1}{\mathbf{C}_1^{\tilde{\mu}}}$ is simply the restriction on $(\mathbf{PerfAlg}/\kappa)\subseteq W_{\cris}$ of our $\prescript{}{1}{\mathbf{C}_1^{\tilde{\mu}}}$ defined here.
	\begin{theorem}\label{Thm:ExtdRepthm}
		For each $R \in (\mathbf{FlatAlg}/W)$, there are canonical bijections, 
		\[(\prescript{}{1}{\mathbf{C}_1^{\tilde{\mu}}})^{\mathrm{p}}(R) \cong \mathsf{E}_{\mu}(\bar{R})\backslash G^2(\bar{R}), \quad g\tilde{\mu}(p)h \mapsto \big(\bar{g}^{-1}, \bar{h}\big),\]
		where $\bar{g} \in G(\bar{R})$ is the canonical reduction modulo $p$ of $g$. This leads to a canonical isomorphism 
		\(\prescript{}{1}{\mathbf{C}_1^{\tilde{\mu}}} \cong \mathrm{Res}_{\kappa/W}(\mathsf{E}_{\mu}\backslash G^2)\) of sheaves on $(\mathbf{FlatAlg}/W)$,
		and hence a canonical isomorphism of $W_{\cris}$-$\mathsf{spaces}$, 
		\begin{equation}\label{Eq:2CC1Isom}
			(\prescript{}{1}{\mathbf{C}_1^{\tilde{\mu}}})_{\cris}\cong  (\prescript{}{1}{\mathcal{C}_1^{\mu}})_{\cris}. 
		\end{equation}
		Said in another way, $ (\prescript{}{1}{\mathbf{C}_1^{\tilde{\mu}}})_{\cris}$ is of reduction along $W_{\cris} \to \kappa_{\cris}$ by $(\prescript{}{1}{\mathcal{C}_1^{\mu}})_{\cris}$; see \S \ref{S:subnotationconven} for our convention of the notation $(\prescript{}{1}{\mathcal{C}_1^{\mu}})_{\cris}$ as a $W_{\cris}$-$\Space$. 
	\end{theorem}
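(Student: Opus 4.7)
The plan is to establish the displayed bijection at the level of presheaves on $(\mathbf{FlatAlg}/W)$, sheafify to obtain $\prescript{}{1}{\mathbf{C}_1^{\tilde\mu}} \cong \mathrm{Res}_{\kappa/W}(\mathsf{E}_\mu\backslash G^2)$, and then combine with \cite[Theorem 6.2]{Yan23zip} together with the convention in \S \ref{S:subnotationconven} to read off \eqref{Eq:2CC1Isom}. The workhorse throughout is the minuscule action of $\mathrm{Ad}(\tilde\mu(p))$ on $\mathrm{Lie}(\mathcal{G})$: it scales the weight-$(\pm1)$ root spaces $\mathrm{Lie}(U_\pm)$ by $p^{\pm1}$, so concretely conjugation by $\tilde\mu(p)^{\pm1}$ sends $U_\pm(R)$ into $U_\pm(pR) \subseteq \mathrm{K}_1(R)$. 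This is the integral analogue of the $t^{\pm1}$-dilation used in the proof of \cite[Theorem 6.2]{Yan23zip}, and I adapt that argument to the mixed-characteristic setting.

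For descent of the candidate map $g\tilde\mu(p)h \mapsto [\bar g^{-1}, \bar h]$ to the double coset, I suppose $g'\tilde\mu(p)h' = k_1\, g\tilde\mu(p)h\, k_2$ with $k_1, k_2 \in \mathrm{K}_1(R)$, and set $u := g^{-1}k_1^{-1}g' \in \mathcal{G}(R)$, $v := hk_2h'^{-1} \in \mathcal{G}(R)$, so the relation reads $v = \tilde\mu(p)^{-1}u\tilde\mu(p)$ in $\mathcal{G}(R[1/p])$. The standard $p$-dilatation constraint attached to $\tilde\mu$ -- integrality of both $u$ and $v$ together with the minuscule weight decomposition -- forces $\bar u \in P_-(\bar R)$, and a parallel computation gives $\bar v \in P_+(\bar R)$ with Levi component equal to that of $\bar u$, so $(\bar u^{-1}, \bar v^{-1}) \in \mathsf{E}_\mu(\bar R)$. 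Since $\bar k_i = 1$, this yields $(\bar{g'}^{-1}, \bar{h'}) = (\bar u^{-1}\bar g^{-1},\, \bar v^{-1}\bar h)$, the required equivalence. For injectivity I reverse this: given $(\bar{g'}^{-1}, \bar{h'}) = (p_-\bar g^{-1}, p_+\bar h)$ with $(p_-, p_+) \in \mathsf{E}_\mu(\bar R)$, lift a common Levi component of $p_\pm$ to $\tilde l \in L(R)$ (by smoothness of $L$), lift the unipotent parts to $\tilde n_\pm \in U_\pm(R)$, and set $\tilde p_\pm := \tilde l\tilde n_\pm$, $g'' := g\tilde p_-^{-1}$, $h'' := \tilde p_+ h$; then $\bar{g''}=\bar{g'}$, $\bar{h''}=\bar{h'}$, and using $\tilde l\tilde\mu(p) = \tilde\mu(p)\tilde l$ one computes
\[
g''\tilde\mu(p)h'' = g\,\tilde n_-^{-1}\,\tilde\mu(p)\,\tilde n_+\,h.
\]
Applying the dilation identities $\tilde\mu(p)\tilde n_+\tilde\mu(p)^{-1} \in U_+(pR) \subseteq \mathrm{K}_1(R)$ and $\tilde\mu(p)^{-1}\tilde n_-^{-1}\tilde\mu(p) \in U_-(pR) \subseteq \mathrm{K}_1(R)$ pushes both unipotent pieces past $\tilde\mu(p)$ into $\mathrm{K}_1(R)$; normality of $\mathrm{K}_1(R)$ in $\mathcal{G}(R)$ then lets me conjugate the resulting $\mathrm{K}_1$-elements out across $g$ and $h$, producing $g''\tilde\mu(p)h'' = k_1\, g\tilde\mu(p)h\, k_2$ with $k_1, k_2 \in \mathrm{K}_1(R)$. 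Combined with $g' \in \mathrm{K}_1(R)\cdot g''$ and $h' \in h''\cdot \mathrm{K}_1(R)$ this gives $[g'\tilde\mu(p)h'] = [g\tilde\mu(p)h]$.

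Surjectivity is immediate once $\mathcal{G}(R)\twoheadrightarrow G(\bar R)$ holds, which for $R \in W_{\cris}$ follows from Hensel's lemma applied to the $p$-adically complete $R$, and in general holds after a flat cover; sheafification then delivers $\prescript{}{1}{\mathbf{C}_1^{\tilde\mu}} \cong \mathrm{Res}_{\kappa/W}(\mathsf{E}_\mu\backslash G^2)$. Restricting to $W_{\cris}$ and invoking \cite[Theorem 6.2]{Yan23zip} in the form $\prescript{}{1}{\mathcal{C}_1^\mu} \cong \mathsf{E}_\mu\backslash G^2$, together with the convention of \S \ref{S:subnotationconven} identifying a $\kappa$-prestack with its pullback to $W_{\cris}$, yields \eqref{Eq:2CC1Isom}. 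I expect the main obstacle to be the commutation bookkeeping in the injectivity step: each application of the minuscule dilation produces a $\mathrm{K}_1$-element in an awkward position, and only a careful sequential push-through using normality of $\mathrm{K}_1$ in $\mathcal{G}(R)$ delivers the clean factorization as $k_1\, g\tilde\mu(p)h\, k_2$. The minuscule hypothesis on $\tilde\mu$ is used essentially at every step of this rearrangement.
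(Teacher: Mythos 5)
Your proposal is correct and takes essentially the same approach as the paper. The paper's own proof simply points to \cite[Theorem 6.2]{Yan23zip} and isolates the needed ingredients in Lemma \ref{Lem:KeyLemma}: your dilation inclusions $\tilde\mu(p)U_+\tilde\mu(p)^{-1}\subseteq\mathcal{L}^1_{\cris}\mathcal{U}_+$ and $\tilde\mu(p)^{-1}U_-\tilde\mu(p)\subseteq\mathcal{L}^1_{\cris}\mathcal{U}_-$ are part (1), your deduction that $u\in\mathcal{G}(R)\cap\tilde\mu(p)\mathcal{G}(R)\tilde\mu(p)^{-1}$ forces $\bar u\in P_-(\bar R)$ (and dually for $v$) is part (2) (${}^{\pm}\mathrm{H}\subseteq\mathrm{H}^{\pm}$), and the Levi-matching is part (3). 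One small correction to your closing remark: you assert the minuscule hypothesis is "used essentially at every step," but the paper's remark following Lemma \ref{Lem:KeyLemma} notes explicitly that the lemma holds without minusculity — the dilation inclusions only require the weights on $\mathrm{Lie}(U_\pm)$ to be positive (resp.\ negative), not exactly $\pm1$.
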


	\begin{proof}
		The proof follows a similar approach as in \cite[Theorem 4.11]{Yan23zip}. We summarize the essential components in Lemma \ref{Lem:KeyLemma} below. 
	\end{proof}
	\begin{remark} 
		Theorem \ref{Thm:ExtdRepthm} implies that the sheaf $\prescript{}{1}{\mathbf{C}_1^{\tilde{\mu}}}$ depends only on the mod $p$ cocharacter $\mu$ and not on the choice of its lift $\tilde{\mu}$.
	\end{remark}

	Let $\mathcal{P}^{\pm} \subseteq \mathcal{G}$ be the parabolic subgroups defined by $\tilde{\mu}$, with their unipotent radicals denoted by $\mathcal{U}_{\pm}$ and their common Levi subgroup by $\mathcal{M}$, which is also the centralizer in $\mathcal{G}$ of $\tilde{\mu}$. We write $P_{\pm}$ and $ U_{\pm}$ for the special fibre of $\mathcal{P}_{\pm}$ and $\mathcal{U}_{\pm}$ respectively. 
	
	Consider the following subsheaves of $\mathcal{L}_{\cris}^+\mathcal{G} = \mathcal{G}$ over $(\mathbf{FlatAlg}/W)$:
	\begin{align*}
		\mathrm{H}^{\pm} & = \mathrm{Red}^{-1}(\mathrm{Res}_{\kappa/W}P_{\pm}), \,
		{}^+\mathrm{H} & = \mathcal{L}_{\cris}^+\mathcal{G} \cap \tilde{\mu}(p)^{-1} \mathcal{L}_{\cris}^+\mathcal{G} \tilde{\mu}(p), \,
		{}^-\mathrm{H} & = \mathcal{L}_{\cris}^+\mathcal{G} \cap \tilde{\mu}(p) \mathcal{L}_{\cris}^+\mathcal{G} \tilde{\mu}^{-1}(p),
	\end{align*}
	where $\mathrm{Red}: \mathcal{G}\to \mathrm{Res}_{\kappa/W}G$ denotes the canonical reduction map. These are, in fact, sheaves over $(\mathbf{FlatAlg}/W)$, as is $\mathcal{L}_{\cris}^+\mathcal{G} = \mathcal{G}$. Define $\mathrm{E}_{\tilde{\mu}}^{\dagger} \subseteq \mathrm{H}^{-} \times \mathrm{H}^{+}$ as the preimage of the subgroup $\mathsf{E}_{\mu} \subseteq P_- \times P_+$ along the canonical reduction map $\mathrm{H}^- \times \mathrm{H}^+ \to P_- \times P_+$. The following lemma can be viewed as a mixed characteristic analogue of (parts of) \cite[Lemmata 3.1, 3.2]{Yan23zip}.
	\begin{lemma}\label{Lem:KeyLemma} 
		\begin{enumerate}
			\item We have the following inclusion:
			\begin{equation*}
				\begin{array}{ccc}
					\tilde{\mu}(p) \mathcal{L}_{\cris}^{+} \mathcal{P}_+ \tilde{\mu}(p)^{-1} \subseteq \mathcal{L}_{\cris}^{+} \mathcal{G}, & \tilde{\mu}(p) \mathcal{L}_{\cris}^{+} \mathcal{U}_+ \tilde{\mu}(p)^{-1} \subseteq \mathcal{L}_{\cris}^{1} \mathcal{U}_+ \subseteq \mathcal{L}_{\cris}^1 \mathcal{G}, \\
					\tilde{\mu}(p)^{-1} \mathcal{L}_{\cris}^{+} \mathcal{P}_- \tilde{\mu}(p) \subseteq \mathcal{L}_{\cris}^{+} \mathcal{G}, & \tilde{\mu}(p)^{-1} \mathcal{L}_{\cris}^{+} \mathcal{U}_- \tilde{\mu}(p) \subseteq \mathcal{L}_{\cris}^{1} \mathcal{U}_- \subseteq \mathcal{L}_{\cris}^1 \mathcal{G}.
				\end{array}
			\end{equation*}
			\item We always have ${}^{\pm}\mathrm{H} \subseteq \mathrm{H}^{\pm}$.
			\item The image of the embedding $\mathrm{H}^{-} \hookrightarrow \mathrm{H}^{-} \times \mathrm{H}^{+}$ given by $h \mapsto (h, \tilde{\mu}(p)^{-1} h \tilde{\mu}(p))$ lies in $\mathrm{E}_{\tilde{\mu}}$.
		\end{enumerate}
	\end{lemma}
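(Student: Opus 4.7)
My plan is to reduce all three parts to computations in the weight decomposition of $\mathbf{Lie}\mathcal{G}$ under $\tilde{\mu}$. Since $\tilde{\mu}$ is minuscule, the adjoint weights lie in $\{-1,0,1\}$, with $\mathbf{Lie}\mathcal{U}_+$ of weight $+1$, $\mathbf{Lie}\mathcal{M}$ of weight $0$, and $\mathbf{Lie}\mathcal{U}_-$ of weight $-1$. Working fpqc-locally so that $\mathcal{G}$ splits (or, equivalently, choosing a Chevalley-type description of the root subgroups, which is scheme-theoretic and descends), conjugation by $\tilde{\mu}(p)$ on a root subgroup $u_\alpha(x)$ yields $u_\alpha(p^{\langle\alpha,\tilde{\mu}\rangle}x)$. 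This reduces each assertion to a bookkeeping exercise with powers of $p$.

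For part (1), decompose $\mathcal{P}_+=\mathcal{M}\ltimes\mathcal{U}_+$ as schemes and note that $\mathcal{M}$ centralizes $\tilde{\mu}(p)$ while conjugating $\mathcal{U}_+$ by $\tilde{\mu}(p)$ multiplies its coordinates by $p$. Hence $\tilde{\mu}(p)\mathcal{L}^+_{\cris}\mathcal{U}_+\tilde{\mu}(p)^{-1}\subseteq \mathcal{L}^1_{\cris}\mathcal{U}_+\subseteq\mathcal{L}^1_{\cris}\mathcal{G}$, and combining with $\mathcal{M}$ yields $\tilde{\mu}(p)\mathcal{L}^+_{\cris}\mathcal{P}_+\tilde{\mu}(p)^{-1}\subseteq\mathcal{L}^+_{\cris}\mathcal{G}$. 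The two remaining inclusions follow by the symmetric argument using $\tilde{\mu}(p)^{-1}$ and $\mathcal{P}_-=\mathcal{M}\ltimes\mathcal{U}_-$.

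For part (2), take $g\in {}^+\mathrm{H}(R)$, so both $g$ and $\tilde{\mu}(p)g\tilde{\mu}(p)^{-1}$ lie in $\mathcal{G}(R)$. After passing to an fpqc cover over which $g$ lands in the open cell $\mathcal{U}_-\cdot\mathcal{P}_+$, write $g=u_-q$ uniquely with $u_-\in\mathcal{U}_-(R)$ and $q\in\mathcal{P}_+(R)$. Then $\tilde{\mu}(p)g\tilde{\mu}(p)^{-1}=\bigl(\tilde{\mu}(p)u_-\tilde{\mu}(p)^{-1}\bigr)\bigl(\tilde{\mu}(p)q\tilde{\mu}(p)^{-1}\bigr)$; by part~(1) the second factor is integral, so the first factor must also be, and by uniqueness of the $\mathcal{U}_-\mathcal{P}_+$ decomposition over $R[1/p]$ this forces $\tilde{\mu}(p)u_-\tilde{\mu}(p)^{-1}\in\mathcal{U}_-(R)$. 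Since conjugation by $\tilde{\mu}(p)$ divides $\mathcal{U}_-$-coordinates by $p$, we get $u_-\in\mathcal{L}^1_{\cris}\mathcal{U}_-$, and so $\bar g=\bar q\in P_+(\bar R)$. Points outside the big cell are handled by covering $\mathcal{G}$ with Weyl-translates of $\mathcal{U}_-\mathcal{P}_+$ (fpqc-locally); the inclusion ${}^-\mathrm{H}\subseteq\mathrm{H}^-$ is symmetric. For part (3), given $h\in\mathrm{H}^-(R)$, lift $\bar h\in P_-(\bar R)$ fpqc-locally to some $h_0\in\mathcal{P}_-(R)$ and write $h=h_0\epsilon$ with $\epsilon\in\mathcal{L}^1_{\cris}\mathcal{G}$. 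Using the big-cell decomposition $\epsilon=\epsilon_-\epsilon_0\epsilon_+$ with $\epsilon_?\in\mathcal{L}^1_{\cris}(\mathcal{U}_-,\mathcal{M},\mathcal{U}_+)$ (valid because $\epsilon\equiv 1$), part~(1) gives $\tilde{\mu}(p)^{-1}h_0\tilde{\mu}(p)\in\mathcal{G}(R)$; moreover $\tilde{\mu}(p)^{-1}\epsilon_-\tilde{\mu}(p)\in\mathcal{L}^1_{\cris}\mathcal{U}_-$, $\epsilon_0$ commutes with $\tilde{\mu}(p)$, and the $p^{-1}$ produced by conjugating $\epsilon_+\in\mathcal{L}^1_{\cris}\mathcal{U}_+$ is absorbed by the $p$ hidden in $\epsilon_+$, so $\tilde{\mu}(p)^{-1}\epsilon\tilde{\mu}(p)\in\mathcal{L}^+_{\cris}\mathcal{G}(R)$. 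Reducing mod $p$ using $h_0=u_-m$ with $m\in\mathcal{M}$, $u_-\in\mathcal{U}_-$, the image of $\tilde{\mu}(p)^{-1}h\tilde{\mu}(p)$ in $G(\bar R)$ equals $\bar m\cdot v$ for some $v\in U_+(\bar R)$, hence lies in $P_+(\bar R)$ with Levi component $\bar m$ — the same Levi component as $\bar h=\bar u_-\bar m\in P_-(\bar R)$. This is precisely the $\mathrm{E}_{\tilde{\mu}}^\dagger$-condition.

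The main obstacle I anticipate is the careful fpqc-descent bookkeeping in parts (2) and (3): the big-cell decompositions are only valid on an open subset, so one must either cover $\mathcal{G}$ by translates of $\mathcal{U}_-\mathcal{P}_+$ or verify that the statements are fpqc-local on source. Because the conclusions ($\bar g\in P_+$, etc.) are closed conditions on $R$-points, it should suffice to check them after any faithfully flat cover that lands $g$ in the big cell — this is the point where the analogous positive-characteristic arguments of \cite[Lemmas 4.1, 4.3]{Yan23zip} are most directly replicated in the mixed-characteristic setting, using that $\mathcal{G}$ is smooth over $W$ and the multiplication $\mathcal{U}_-\times\mathcal{P}_+\to\mathcal{G}$ is an open immersion.
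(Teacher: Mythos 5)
The paper offers no proof of this lemma, only noting that it is "a mixed characteristic analogue" of \cite[Lemmas~4.1,~4.3]{Yan23zip}, so I assess your proposal on its own. Part~(1) is correct. The reduction to the open cell in parts~(2) and~(3), however, is a genuine gap, not just "careful fpqc-descent bookkeeping": factoring a morphism $\mathrm{Spec}\,R\to\mathcal{G}$ through the open subscheme $\mathcal{U}_-\mathcal{M}\,\mathcal{U}_+$ is not a condition that can be created by faithfully flat base change, and the elements you want to decompose need not factor through it. Already for $\mathcal{G}=\mathrm{SL}_2$, $\tilde\mu=\operatorname{diag}(t,1)$ and $R=W[a]$, the element $\epsilon=\left(\begin{smallmatrix}1+pa&-p\\ pa^2&1-pa\end{smallmatrix}\right)$ lies in $\mathcal{L}^1_{\cris}\mathcal{G}(R)$ (determinant~$1$, identity mod~$p$), but its $(1,1)$-entry $1+pa$ is a non-unit of $R$ and remains a non-unit after any faithfully flat extension, so $\epsilon$ never admits a $\mathcal{U}_-\mathcal{M}\,\mathcal{U}_+$ factorization. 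Covering $\mathcal{G}$ by Weyl translates of the big cell is Zariski-local but does not help, since $\tilde\mu(p)w\tilde\mu(p)^{-1}$ is in general non-integral and the power-of-$p$ accounting degenerates on the other translates. The lift $h_0\in\mathcal{P}_-(R)$ of $\bar h\in P_-(\bar R)$ that opens your part~(3) has the same flaw: such a lift is guaranteed by formal smoothness only after $p$-adic completion, whereas the sheaves here are defined over all of $(\mathbf{FlatAlg}/W)$.

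The standard repair, and the argument presumably meant by the paper's reference to \cite{Yan23zip}, works entirely inside a faithful representation $\rho\colon\mathcal{G}\hookrightarrow\mathrm{GL}(\Lambda_0)$: decompose $\Lambda_0=\bigoplus_i\Lambda_0^i$ into $\tilde\mu$-weight spaces and read everything off in block form $\rho(g)=(g_{ij})$. For~(2), integrality of $\tilde\mu(p)g\tilde\mu(p)^{-1}$ is exactly $g_{ij}\in p^{j-i}R$ for $i<j$, so $\overline{\rho(g)}$ preserves the decreasing weight filtration and $\bar g\in G(\bar R)\cap Q_+(\bar R)=P_+(\bar R)$, where $Q_+\subseteq\mathrm{GL}(\Lambda_0)$ is the parabolic cut out by $\tilde\mu$. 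For~(3), work in the Hodge representation, on which $\tilde\mu$ has weights in $\{0,1\}$ --- this is precisely where minuscularity is essential, so the paper's remark that the whole lemma holds without it can only be correct for parts~(1) and~(2). Then $\rho(h')_{ij}=p^{j-i}h_{ij}\in R$ for all $i,j$, and the diagonal blocks of $\bar h$ and $\bar{h'}$ agree, which is exactly the $\mathsf{E}_\mu$-condition. This is the same power-of-$p$ bookkeeping as your Chevalley computation, but it closes uniformly over every flat $W$-algebra $R$ with no appeal to a Bruhat cell.
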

	
	\begin{remark}
		\begin{enumerate}
			\item Lemma \ref{Lem:KeyLemma} holds without requiring that $\tilde{\mu}$ is minuscule. 
			\item Another way to state the theorem is that the association \(\bar{R} \mapsto \prescript{}{1}{\mathbf{C}_1^{\tilde{\mu}}}(R)\), for any lift \(R \in W_{\cris}\) of \(\bar{R}\), is well-defined and defines a sheaf over \(\kappa_{\cris}\), represented by the \(\kappa\)-scheme \(\prescript{}{1}{\mathcal{C}_1^{\mu}} \cong G^2 / \mathsf{E}_{\mu}\). This motivates our use of base reduction diagrams: without them, defining \(\prescript{}{1}{\mathbf{C}_1^{\tilde{\mu}}}\) before Theorem~\ref{Thm:ExtdRepthm}, or even stating the theorem, would be unnatural.
			
		\end{enumerate}
	\end{remark}
	
	\section{Prismatic Frobenius period maps}  \label{S:FrobPerMap}
	\subsection{Two restrictions of the prismatic $\sG$-torsor $\mathbb{J}_{\prism}$}
	In this section, we work in the setting of \S~\ref{S:ShVLoc}, adopting the notation and conventions of \S~\ref{S:BasicSetup} with $\mathfrak{X}=\sS$. The goal is to define prismatic Frobenius period maps for the pair $(\mathcal{S},S)$.

	Recall from Theorem \ref{Thm: PrisGTors} that over the absolute prismatic site $\sS_{\prism}$ is a $\sG$-torsor $\mathbb{J}_{\prism}$, 
	trivializing the cohomology $\mathfrak{M}_{\widehat{\sA}/\widehat{\sS}}:=\mathrm{H}^1_{\prism}(\widehat{\sA}/\widehat{\sS})$ and Frobenius invariant prismatic tensors $\mathrm{T}_{\prism}\subseteq \mathfrak{M}_{\widehat{\sA}/\widehat{\sS}}^{\otimes}$. We denote by $\mathbb{J}_{\BKprism}$ and $\mathbb{J}_{\crispris}$ for the restrictions of $\mathbb{J}_{\prism}$ to $ \sS_{\BKprism}$ and to $\sS_{\crispris}$ respectively.
	\begin{definition}\label{Def:BkCrisTors}
		Define $\mathbb{J}_{\BKcrispris}$ to be the pullback to $\mathcal{S}_{\tilde{\prism}_{\cris}}$ of the $\mathbb{J}_{\BKprism}$ along  $\iota: \mathcal{S}_{\tilde{\prism}_{\cris}}\to \mathcal{S}_{\BKprism}$. 
	\end{definition}
	We have the following commutative diagram of reduction maps,
	\begin{equation}\label{Eq: ComDiagRedMap}
		\xymatrix{	\mathbb{J}_{\BKcrispris}\ar[d]_{\mathcal{L}^+_{\BKcrispris}\mathcal{G}}\ar[rr]^{\mathrm{Red}_{\cris}^{\BK}}&&\mathbb{J }_{\crispris}\ar[d]^{\mathcal{L}^+_{\crispris}\mathcal{G}}\\
			\sS_{\tilde{\prism}_\cris}\ar[rr]^{\mathrm{Red}}&& S_{\crispris}, 
		}
	\end{equation}
	where the vertical arrows are torsors under the corresponding group $\Spaces$.
	
	\subsection{Breuil-Kisin prismatic Frobenius period map}\label{S: BKPerMap}
	For each point $\underline{x}=(\underline{R}, x: \Spf R \to \widehat{\sS})$ of $\sS_{\tilde{\prism}_\cris}$, we denote by 
	\(
	\varphi_{\underline{x}}: \varphi_{\mathfrak{S}_R}^* \mathfrak{M}_{\underline{x}} \to \mathfrak{M}_{\underline{x}}
	\)
	the Frobenius map of $\mathfrak{M}_{\underline{x}}:= \mathrm{H}_{\prism}^1(\mathcal{A}_x/\underline{\mathfrak{S}_R})$, the first prismatic cohomology of $\sA_x$ over the Breuil-Kisin prism $\underline{\mathfrak{S}_R}=(\mathfrak{S}_R, \varphi_{\mathfrak{S}_R}, E)$ as in \eqref{Eq:CrisPrisRedMap}. The fibre $\Space$ $\mathbb{J}_{\BKcrispris}(\underline{x})$ of $\underline{x}$, is the set (i.e., discrete groupoid) of \emph{global sections} of the following $\mathfrak{S}_R$-scheme,
	\[
	\mathbb{J}_{\BKcrispris, \underline{x}}:= \underline{\mathrm{Isom}}\big((\Lambda_0^* \otimes_{\Zp}\mathfrak{S}_R, \mathrm{T}_0\otimes 1),  (\mathrm{H}^1_{\prism}(\sA_x/\underline{\mathfrak{S}_R}), \mathrm{T}_\prism)\big).
	\]
	For a section $\tilde{\beta}\in \mathbb{J}_{\BKcrispris, \underline{x}}(\mathfrak{S}_R)$, write $\tilde{\beta}^*(\varphi_{\underline{x}})$ for the composition,
	\(
	\tilde{\beta}^{-1} \varphi_{\underline{x}}\varphi^*(\tilde{\beta}) 
	\), which is an element in $\mathcal{L}_{\BKcrispris}\mathcal{G}(\underline{R})= \sG(\mathfrak{S}_R[1/E])$.
	Here we have used the identification $\varphi_{\mathfrak{S}_R}^*(\Lambda_0^*\otimes_{\Zp}\mathfrak{S}_R) = \Lambda_0^*\otimes_{\Zp}\mathfrak{S}_R$, noting that $\Lambda_0^*$ is defined over $\Zp$.
	
	\begin{lemma}\label{Lem: DefBKPerMap}
		The association $\tilde{\beta} \mapsto \tilde{\beta}^*(\varphi_{\underline{x}})$ defines a morphism,
		\(
		\xi_{\BKcrispris}^{\sharp}: \mathbb{J}_{\BKcrispris} \to \mathbf{C}_{\BKcrispris}^{\tilde{\mu}},
		\)
		which is $\mathcal{L}^+_{\BKcrispris}\mathcal{G}$-equivariant and hence 
		induces a morphism $\xi_{\BKcrispris}: \sS_{\tilde{\prism}_\cris} \to [\mathbf{C}_{\BKcrispris}^{\tilde{\mu}}/\mathrm{Ad}_{\varphi}\mathcal{L}^+_{\BKcrispris}\mathcal{G}]$ of $W_{\crispris}$-stacks.
	\end{lemma}
	\begin{proof}
		We only need to show that the trivialization
		\(
		\tilde{\beta}^*(\varphi_{\underline{x}})\) lies in \( \mathbf{C}^{\tilde{\mu}}_{\BKcrispris}(\underline{R})\).
		But this follows from the combination of Theorem \ref{Thm: PrismticTypeMu} and Lemma \ref{Lem: BKFrob}.
	\end{proof}
	We will refer to $\xi_{\BKcrispris}$ or $\xi_{\BKcrispris}^{\sharp}$  as the \emph{Breuil-Kisin (prismatic) Frobenius period map} for $\sS=\sS_{\K}$.
	
	\subsection{Crystalline prismatic Frobenius period map} \label{S: CryPerMap}
	This subsection is parallel to the previous subsection \S \ref{S: BKPerMap} and can indeed be deduced from the discussion therein. However, we explicate it here for the convenience of future reference.
	
	For each point $\underline{\bar{x}}=(\underline{R}, \bar{x}: \mathrm{Spec} \bar{R} \to S)$ of $S_{\crispris}$, we denote by
	\(
	\varphi_{\underline{\bar{x}}}: \varphi_R^* \mathrm{H}_{\prism}^1(\mathcal{A}_{\underline{\bar{x}}}/\underline{R}) \to \mathrm{H}_{\prism}^1(\mathcal{A}_{\underline{\bar{x}}}/\underline{R}),
	\)
	the Frobenius map of $ \mathrm{H}_{\prism}^1(\mathcal{A}_{\underline{\bar{x}}}/\underline{R})$. The fiber $\Space$ $\mathbb{J}_{\crispris}(\underline{\bar{x}})$ of $\underline{\bar{x}}$, is the set of global sections of the following $R$-scheme,
	\[
	\mathbb{J}_{\crispris, \, \underline{\bar{x}}}:= \underline{\mathrm{Isom}}\big((\Lambda_0^* \otimes_{\Zp}R, \mathrm{T}_0\otimes 1), (\mathrm{H}^1_{\prism}(\sA_{\bar{x}}/\underline{R}), \mathrm{T}_\prism)\big).
	\]
	For a section $\beta \in \mathbb{J}_{\crispris, \, \underline{\bar{x}}}(R)$, write $\beta^*(\varphi_{\underline{\bar{x}}})$ for the composition
	\(
	\beta^{-1} \varphi_{\underline{\bar{x}}} \varphi^*(\beta) 
	\), which is an element of $\sG(R[1/p]) = \mathcal{L}_{\crispris} \mathcal{G}(\underline{R})$. 
	
	\begin{lemma}\label{Lem:DefCrisPerMap}
		The association $\beta \mapsto \beta^*(\varphi_{\underline{\bar{x}}})$ defines a morphism,
		\(
		\xi_{\crispris}^{\sharp}: \mathbb{J}_{\crispris} \to \mathbf{C}_{\tilde{\prism}_\cris}^{\tilde{\mu}}, 
		\) which is $\mathcal{L}^+_{\crispris}\mathcal{G}$-equivariant and hence
		induces a map $\xi_{\crispris}: S_{\crispris} \to [\mathbf{C}_{\tilde{\prism}_\cris}^{\tilde{\mu}}/\mathrm{Ad}_{\varphi}\mathcal{L}^+_{\crispris}\mathcal{G}]$ of $W_{\crispris}$-stacks.
	\end{lemma}
	\begin{proof}
		Same as the proof of Lemma \ref{Lem: DefBKPerMap}.
	\end{proof}
	We will refer to $\xi_{\crispris}$ (equivalently $\xi^{\sharp}_{\crispris}$) as the \emph{crystalline (prismatic) Frobenius period map} for the pair $(\mathcal{S}, S)$ (or simply for $S$).  
	
	\subsection{Connection with Classical Crystalline Period Maps}\label{S:CompareClasCryPerMap}
	
	Let \( \varphi(\tilde{\mu}) \) denote the base change of \( \tilde{\mu} \) along the Frobenius \( \varphi: W \to W \). Recall that the classical map,
	\[
	\xi_{\mathrm{cl}}: S(\bar{\mathbb{F}}_p) \longrightarrow \mathbf{C}^{\varphi(\tilde{\mu})}_{\tilde{\prism}_\cris}(\underline{\Breve{\mathbb{Z}}_p})/\mathrm{Ad}_{\varphi} \mathcal{G}(\underline{\Breve{\mathbb{Z}}_p}),
	\]
	where \( \Breve{\mathbb{Z}}_p := W(\bar{\mathbb{F}}_p) \), is defined by trivializations of the Frobenius map on the crystalline cohomology \( \mathrm{H}^1_{\cris}(\mathcal{A}_{\bar{x}}/\Breve{\mathbb{Z}}_p) \), preserving tensors, with \( \bar{x} \) a point of \( S(\bar{\mathbb{F}}_p) \).
	
	The deprism map \( S_{\crispris} \to S_\cris \), when restricted to \( S_{\cris}^{\perf} \subseteq S_{\cris} \), admits a section \( S_{\cris}^\perf \hookrightarrow S_{\crispris} \), identifying \( S_{\cris}^{\perf} \) with \( S_{\crispris}^{\perf} \subseteq S_{\crispris} \), where the subspace  \( S_{\crispris}^{\perf} \) consists of perfect points, i.e., those points \( ((R, \varphi_R), \bar{x}) \) where \( \varphi_R \) is a bijection. Thus, the map \( \xi_{\mathrm{cl}} \) extends naturally to a \( W_{\crispris} \)-stack map:
	\[
	\xi_{\mathrm{cl}}^{\perf}: S_{\cris}^{\perf} = S_{\crispris}^{\perf} \longrightarrow [\mathbf{C}^{\varphi(\tilde{\mu})}_{\tilde{\prism}_\cris}/\mathrm{Ad}_{\varphi} \mathcal{L}^+_{\crispris} \mathcal{G}]^{\perf},
	\]
	which we term as the \emph{classical crystalline Frobenius period map} for \( S \). This map can be recovered from $\xi_{\crispris}^{\perf}$ from the relation, 
	\[
	\xi_{\mathrm{cl}}^{\perf} = [\varphi]^{\perf} \circ \xi_{\crispris}^{\perf},
	\]
	where \( \xi_{\crispris}^{\perf} \) is the restriction to \( S_{\cris}^{\perf} \) of \( \xi_{\crispris} \)  and \([\varphi]^{\perf}\) is the restriction to \( S_{\cris}^{\perf} \) of
	\[
	[\varphi]: [ \mathbf{C}^{\tilde{\mu}}_{\tilde{\prism}_\cris}/\mathrm{Ad}_{\varphi} \mathcal{L}^+_{\crispris} \mathcal{G} ] \to [\mathbf{C}^{\varphi(\tilde{\mu})}_{\tilde{\prism}_\cris}/\mathrm{Ad}_{\varphi} \mathcal{L}^+_{\crispris} \mathcal{G}],  
	\] which is induced by the Frobenius map \( \varphi: \mathbf{C}^{\tilde{\mu}}_{\tilde{\prism}_{\cris}} \to \mathbf{C}^{\varphi(\tilde{\mu})}_{\tilde{\prism}_{\cris}} \).

	\section{Reductions of prismatic $\mathcal{G}$-torsors}\label{S:RedGTorsor}
	In this section, we study the reductions of various prismatic $\mathcal{G}$-torsors. 
	
	\subsection{Reduction of Breuil-Kisin prismatic $\sG$-torsors}
	\label{S:RedofBKPerMap}
	Denote by	\(
	\mathbb{J}_{\BKcrispris}\otimes_{\underline{\mathfrak{S}}}\underline{W}
	\) 
	the presheaf on $\sS_{\tilde{\prism}_\cris}$  given by associating a point  $\underline{x}=(\underline{R}, x: \Spf R\to \widehat{\sS})$ the set of global sections of the $R$-scheme, 
	\[
	\mathbb{J}_{\BKcrispris, \underline{x}}\otimes _{\mathfrak{S}_R, t\mapsto 0} R:=	\underline{\mathrm{Isom}}\big((\Lambda_0 \otimes_{\Zp}\mathfrak{S}_R, \mathrm{T}_0\otimes 1)\otimes _{\mathfrak{S}_R, t\mapsto 0} R,  (\mathrm{H}^1_{\prism}(\sA_x/R), \mathrm{T}_\prism)\otimes _{\mathfrak{S}_R, t\mapsto 0} R\big). 
	\]
	
	\begin{lemma}\label{Lem:ModtIdentftion}
		The $\sS_{\tilde{\prism}_\cris}$-$\Space$
		$\mathbb{J}_{\BKcrispris}\otimes_{\underline{\mathfrak{S}}}\underline{W}$ is of reduction by $\mathbb{J}_{\crispris}$ along the crystalline prismatic reduction map $\mathrm{Red}: \sS_{\tilde{\prism}_\cris}\to S_{\crispris}$. 
	\end{lemma}
	\begin{proof} This is clear: as for each point $\underline{x}=(\underline{R}, x)$ of  $\mathcal{\sS}_{\tilde{\prism}_\cris}$, we have a canonical isomorphism 
		\[(\mathrm{H}_{\prism}^1(\mathcal{A}_x/\underline{\mathfrak{S}_R}), \mathrm{T}_{\prism})\otimes_{\mathfrak{S}_R}\mathfrak{S}_R/(t)\cong (\mathrm{H}_{\prism}^1(\mathcal{A}_{\bar{x}}/\underline{R}), \mathrm{T}_{\prism}),\]
		since the reduction modulo $t$ map $(\underline{\mathfrak{S}_R}, x) \to (\underline{R}, \bar{x})$ is a morphism of the site $\sS_{\BKcrispris}$.  
	\end{proof}
	
	With the identification in Lemma \ref{Lem:ModtIdentftion}, the reduction map $\mathrm{Red}^{\BK}_{\cris}: \mathbb{J}_{\BKcrispris}\to \mathbb{J}_{\crispris}$ defined in \eqref{Eq: BKCrisRedMap} is identified with the canonical reduction map $\mathbb{J}_{\BKcrispris}\to \mathbb{J}_{\BKcrispris}\otimes_{\underline{\mathfrak{S}}}\underline{W}$, and hence forms a $\mathcal{L}^1_{\BKcrispris}\mathcal{G}$-torsor, when viewed as a map of $W_{\crispris}$-$\Spaces$. Here $\mathcal{L}^1_{\BKcrispris}\mathcal{G}$ denotes the kernel of the reduction map $\mathrm{Red}_{\cris}^{\BK}: \mathcal{L}_{\BKcrispris}^+\mathcal{G} \to \mathcal{L}_{\crispris}^+\mathcal{G} $. Note that it is NOT the pullback along $\iota: W_{\crispris}\to W_{\BKprism}$ of $\mathcal{L}^1_{\BKprism}\mathcal{G}$. 
	
	\begin{lemma}\label{Lem:  PrisPerMapDiagm}
		The following commutative diagram of maps between $W_{\crispris}$-$\mathsf{spaces}$, 
		\begin{align}\label{Eq: PrisPerMapDiagm}
			\begin{array}{c@{\hspace{0.5cm}}c}
				\xymatrix{
					\mathbb{J}_{\BKcrispris} \ar[r]^{\xi^{\sharp}_{\BKcrispris}} \ar[d]  & \mathbf{C}^{\tilde{\mu}}_{\BKcrispris} \ar[d]\ar[dr]^{\text{mod } t}& \\
					\mathbb{J}_{\crispris} \ar[r] \ar@/_1.5pc/[rr]_{\quad \xi^{\sharp}_{\crispris}} & [\mathcal{L}^1_{\BKcrispris}\mathcal{G}\backslash \mathbf{C}_{\BKcrispris}^{\tilde{\mu}}/\mathcal{L}^1_{\BKcrispris}\mathcal{G}]\ \ar[r]^{\quad \quad \text{mod } t} & \mathbf{C}^{\tilde{\mu}}_{\crispris},
				}
				&
				\xymatrix{
					\sS_{\tilde{\prism}_\cris} \ar[rr]^{\xi_{\BKcrispris}\quad \quad \quad} \ar[d] && [\mathbf{C}_{\BKcrispris}^{\tilde{\mu}}/\mathrm{Ad}_{\varphi}\mathcal{L}_{\BKcrispris}^+\mathcal{G}] \ar[d]^{\text{mod } t} \\
					S_{\crispris} \ar[rr]^{\xi_{\crispris}\quad \quad \quad} && [\mathbf{C}_{\crispris}^{\tilde{\mu}}/\mathrm{Ad}_{\varphi}\mathcal{L}_{\crispris}^+\mathcal{G}].
				}
			\end{array}
		\end{align}
	\end{lemma}
	\begin{proof}
		We only need to show that the composition of  $\xi_{\BKcrispris}^{\sharp}$ with the canonical projection from  $\mathbf{C}^{\tilde{\mu}}_{\BKcrispris}$ to the quotient stack $[\mathcal{L}^1_{\BKcrispris}\mathcal{G}\backslash \mathbf{C}_{\BKcrispris}^{\tilde{\mu}}/\mathcal{L}^1_{\BKcrispris}\mathcal{G}]$ factors through the  reduction map $\mathbb{J}_{\BKcrispris}\to \mathbb{J}_{\crispris}$. Indeed, this follows from the combination of the following three facts: (1) for each point $\underline{x}=(\underline{R}, x)$ of $\sS_{\tilde{\prism}_\cris}$, with reduction $\underline{\bar{x}}$ in $S_{\crispris}$,  the canonical map $\mathbb{J}_{\BKcrispris, \underline{x}}\to  \mathbb{J}_{\crispris, \underline{\bar{x}}}$ is a $\mathcal{L}^1_{\BKcrispris}\mathcal{G}(\underline{R})$-torsor. (2) The map $\xi_{\BKcrispris}^{\sharp}$ is $\mathcal{L}^+_{\BKcrispris}\mathcal{G}$- $\varphi$-equivariant. (3) The subgroup functor $\mathcal{L}^1_{\BKcrispris}\mathcal{G}\subseteq \mathcal{L}^+_{\BKcrispris}\mathcal{G}$ is $\varphi$-stable. 
	\end{proof}
	
	\subsection{Reduction of crystalline prismatic $\sG$-torsor}\label{S:CryPrisTorRed}
	Our main goal in this subsection is to show that the crystalline prismatic $\sG$-torsor $\mathbb{J}_{\crispris}$ is of reduction along the defrobenius deduction map $S_{\crispris}\to S_{\widetilde{\cris}}$, that is, Theorem \ref{Thm:KeyRedSpace}. We need some preparation for this theorem.  
	
	We write $\mathrm{H}^1_{\cris}(\widehat{\sA}/\widehat{\sS})$ for the first relative crystalline cohomology of $\widehat{\sA}$ over $\widehat{\sS}$, by which we mean
	\[
	\mathrm{H}^1_{\cris}(\widehat{\sA}/\widehat{\sS}):= (\mathbf{R}^1\pi_{\mathrm{CRIS}, *}\mathcal{O}_{\sA}^{\cris} )_{\widehat{\sS}}\cong \mathbb{D}^*(\sA/S)_{\widehat{\sS}},
	\]
	the evaluation at the (pro) PD-thickening of the crystal $\mathbf{R}^1\pi_{\mathrm{CRIS}, *}\mathcal{O}_{\sA}^{\cris} $, which is canonically isomorphic to the Dieudonn\'e crystal $\mathbb{D}^*(\sA/S)$ (see \cite[{Proposition 3.3.7, Th\'eor\`em 2.5.6}]{BBMTheoriedeDieudonnecristalline}), where $\pi: \sA \to S$ is the structure moprhism. 
	Likewise, we write $\mathrm{H}^1_{\mathrm{dR}}(\widehat{\sA}/\widehat{\sS})$ for the first de Rham cohomology of $\widehat{\sA}$ relative to $\widehat{\sS}$. Then we have the canonical Berthelot-Ogus isomorphism,
	\begin{equation}\label{Eq: BOCompar}
		(\mathrm{H}^1_{\cris}(\widehat{\sA}/\widehat{\sS}), \mathrm{T}_{\cris})\cong (\mathrm{H}_{\mathrm{dR}}^1(\widehat{\sA}/\widehat{\sS}), \mathrm{T}_{\mathrm{dR}}).
	\end{equation}
	The matching of tensors is inherent in their construction; see for example \cite[Equation (3.3.6)]{KimRapoportUniformisation} for the construction of the crystalline tensors $\mathrm{T}_{\mathrm{cris}}$, and see \cite{KisinIntegralModels} for the construction of $\mathrm{T}_{\mathrm{dR}}$. 
	Define the following formal $\widehat{\sS}$-scheme, 
	\[    
	\mathbb{I}_{\cris}:=\underline{\mathrm{Isom}}_{\mathcal{O}_{\widehat{\mathcal{S}}}}\big((\Lambda_0^* \otimes_{\Zp}\mathcal{O}_{\widehat{\sS}}, \mathrm{T}_0\otimes 1),  (\mathrm{H}^1_{\cris}(\widehat{\sA}/\widehat{\sS}), \mathrm{T}_\cris)\big),
	\]
	which via \eqref{Eq: BOCompar}, is nothing but the $p$-adic completion of the following $\sG$-torsor $\mathbb{I}$ (see for example \cite[Lemma 3.4]{YanLocCon}),
	\[
	\mathbb{I}_{\mathrm{dR}}:=\underline{\mathrm{Isom}}_{\mathcal{O}_{\mathcal{S}}}\big((\Lambda_0^* \otimes_{\Zp}\mathcal{O}_\sS, \mathrm{T}_0\otimes 1),  (\mathrm{H}^1_{\mathrm{dR}}(\sA/\sS), \mathrm{T}_\mathrm{dR})\big). 
	\]
	Denote by $\Fil\mathrm{H}^1_{\cris}(\widehat{\sA}/\widehat{\sS})\subseteq \mathrm{H}^1_{\cris}(\widehat{\sA}/\widehat{\sS})\cong \mathrm{H}_{\mathrm{dR}}^1(\widehat{\sA}/\widehat{\sS})$ the preimage of the Hodge filtration $\omega_{\sA/S}\subseteq \mathrm{H}^1_{\mathrm{dR}}(\sA/S)$, under the canonical projection $\mathrm{H}_{\mathrm{dR}}^1(\widehat{\sA}/\widehat{\sS})\to \mathrm{H}_{\mathrm{dR}}^1(\sA/S) $. As can be seen by working Zariski locally (or Lemma \ref{Lem: CrisTens} below), the filtration $\Fil\mathrm{H}^1_{\cris}(\widehat{\sA}/\widehat{\sS})$ is locally free of the same rank as that of $\mathrm{H}^1_{\cris}(\widehat{\sA}/\widehat{\sS})$. Note that  inverting $p$ makes the inclusion
	\(
	\Fil\mathrm{H}^1_{\cris}(\widehat{\sA}/\widehat{\sS})\subseteq \mathrm{H}^1_{\cris}(\widehat{\sA}/\widehat{\sS})
	\) an identity
	\[
	\Fil\mathrm{H}^1_{\cris}(\widehat{\sA}/\widehat{\sS})[1/p]= \mathrm{H}^1_{\cris}(\widehat{\sA}/\widehat{\sS})[1/p].
	\]
	
	\begin{lemma} \label{Lem: CrisTens}
		We have 
		\(
		\mathrm{T}_{\cris}\subseteq \big(\Fil\mathrm{H}^1_{\cris}(\widehat{\sA}/\widehat{\sS})\big)^{\otimes}. 
		\)
	\end{lemma}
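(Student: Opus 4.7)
The plan is to reduce the statement to a weight calculation under the Hodge cocharacter $\tilde{\mu}$, using a local trivialization of the $\mathcal{G}$-torsor $\mathbb{I}_{\cris}$. First, I would observe that both sides of the asserted inclusion are compatible with restriction along étale covers, so it suffices to prove the statement after pulling back to a cover of $\widehat{\sS}$ on which $\mathbb{I}_{\cris}$ is trivializable (such covers exist because $\mathbb{I}_{\cris}$ is a $\mathcal{G}$-torsor and $\mathcal{G}$ is smooth). A choice of trivialization $\beta$ yields an isomorphism of pairs
\[
(\mathrm{H}^1_{\cris}(\widehat{\sA}/\widehat{\sS}),\, \mathbb{T}_{\cris}) \cong (\Lambda_0 \otimes_{\Zp} \mathcal{O}_{\widehat{\sS}},\, \mathbb{T}_0 \otimes 1),
\]
so the problem becomes understanding where $\mathbb{T}_0 \otimes 1$ sits inside the tensor algebra of $\Lambda_0 \otimes \mathcal{O}_{\widehat{\sS}}$ relative to the filtration corresponding to $\Fil \mathrm{H}^1_{\cris}$.

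Second, I would identify this filtration locally in terms of $\tilde{\mu}$. Since $(\mathbf{G}, \mathbf{X})$ is of Hodge type with Hodge cocharacter $\mu$, the sub-bundle $\omega_{\sA/S} \subseteq \mathrm{H}^1_{\mathrm{dR}}(\sA/S)$ is cut out (after possibly adjusting $\beta$ to a Hodge splitting, allowed because the space of Hodge splittings is a torsor under a smooth unipotent group) by the parabolic $P_-$ attached to $\mu$. Consequently the preimage $\Fil \mathrm{H}^1_{\cris}$ of $\omega_{\sA/S}$ under reduction modulo $p$ will be identified with the $\tilde{\mu}$-refined lattice
\[
\tilde{\mu}(p)^{-1}(\Lambda_0 \otimes \mathcal{O}_{\widehat{\sS}}) \cap (\Lambda_0 \otimes \mathcal{O}_{\widehat{\sS}}).
\]

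Third, each $t \in \mathbb{T}_0$ is $\mathcal{G}$-invariant by construction, and since $\tilde{\mu}$ factors through $\mathcal{G}$, each $t$ is $\tilde{\mu}$-invariant, i.e.\ of weight $0$ in $\Lambda_0^{\otimes}$. By applying Lemma~\ref{Lem:KeyLemma}(1) slot-by-slot in the tensor algebra, weight-$0$ elements of $(\Lambda_0 \otimes \mathcal{O}_{\widehat{\sS}})^{\otimes}$ lie in the $\otimes$-algebra generated by the $\tilde{\mu}$-refined lattice above, since the $\tilde{\mu}(p)$-contributions on positive- and negative-weight slots cancel. Transporting the conclusion back via $\beta^{-1}$ yields $\mathbb{T}_{\cris} \subseteq (\Fil \mathrm{H}^1_{\cris})^{\otimes}$.

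The main obstacle will be the $p$-integral aspect. After inverting $p$ the inclusion is essentially tautological, reducing to the standard fact that Hodge tensors of Shimura data of Hodge type lie in $\Fil^0$ of the de Rham tensor algebra; but the integral refinement requires the minuscule condition on $\tilde{\mu}$ (so only weights $0$ and $-1$ occur) together with the careful bookkeeping between $\tilde{\mu}(p)^{\pm 1}$-conjugation and the lattice structure, both of which are packaged in Lemma~\ref{Lem:KeyLemma}. I expect the argument to be short once the trivialization step is set up, because the tensor-algebra cancellation is formal.
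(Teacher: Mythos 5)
Your strategy --- reduce to a local trivialization $\beta$ of $\mathbb{I}_{\cris}$, align it with the Hodge filtration, and conclude from $\tilde{\mu}$-invariance of the tensors --- is essentially the paper's own proof. Two corrections to the execution, however. First, your description of the filtered lattice as $\tilde{\mu}(p)^{-1}(\Lambda_0\otimes\mathcal{O}_{\widehat{\sS}})\cap(\Lambda_0\otimes\mathcal{O}_{\widehat{\sS}})$ is off: depending on sign convention this either collapses to all of $\Lambda_0\otimes\mathcal{O}_{\widehat{\sS}}$ or the intersection is redundant; the correct statement (and the one the paper uses) is simply $\Fil M=\tilde{\mu}(p)(M)$, the \emph{image} of the lattice $M$ under $\tilde{\mu}(p)$. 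Second, the appeal to Lemma~\ref{Lem:KeyLemma}(1) ``slot-by-slot in the tensor algebra'' is misdirected: that lemma concerns $\tilde{\mu}(p)$-conjugation of loop-group elements over $W$ and is used for Theorem~\ref{Thm:ExtdRepthm}, not for placing tensors in a filtered lattice. The cleaner fact that does the work --- and is what the paper invokes --- is that $\tilde{\mu}(p)\in\mathcal{G}(R[1/p])$ fixes each $t\in\mathbb{T}_{\cris}$ (being a $\mathcal{G}$-tensor) and carries $M^\otimes$ isomorphically onto $(\Fil M)^\otimes$ by functoriality of $(\cdot)^\otimes$, so $t=\tilde{\mu}(p)\cdot t\in(\Fil M)^\otimes$. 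No weight-cancellation bookkeeping is needed once this is observed; the minuscule hypothesis enters only in identifying $\tilde{\mu}(p)(M)$ with the Hodge filtration preimage $\Fil M=M^1\oplus pM^0$.
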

	\begin{proof}
		This can be verified locally: we may assume $\widehat{\sS}=\Spf R$ with $R$ a small formal $W$-algebra such that there exists a Frobenius lift $\varphi_R: R\to R$ making $(R, \varphi_R)$ a crystalline prism. 
		
		Taking global sections of the locally free sheaves \(
		\Fil\mathrm{H}^1_{\cris}(\widehat{\sA}/\widehat{\sS})\subseteq \mathrm{H}^1_{\cris}(\widehat{\sA}/\widehat{\sS})
		\)
		we obtain an inclusion $\Fil M \subseteq M$ of locally free $R$-module, where $M=\mathbb{D}^*(\sA_{\bar{R}})(R\twoheadrightarrow \bar{R})$, and $\Fil M \subseteq M$ is the pullback along the canonical projection $M\to \bar{M}: =M/pM$ of the filtration $\bar{M}^0\subseteq \bar{M}$, where $\bar{M}^0$ is a direct summand of $\bar{M}$ such that $\varphi^* \bar{M}^0\subseteq \bar{M}^{\varphi}:= \varphi^*\bar{M}$ is equally to the kernel of the Frobenius map $\mathrm{F}: \bar{M}^{\varphi} \to \bar{M}$. Alternatively, it is the direct summand of $\bar{M}$, which corresponds to the Hodge filtration $\omega_{\sA/\bar{R}}$ under the canonical isomorphism $\bar{M}\cong \mathrm{H}_{\mathrm{dR}}^1(\sA/\bar{R})$. 
		
		After a possible $p$-adic \'etale extension of $R$, we may assume that $\mathbb{I}_{\cris}(R)\neq \emptyset$. Choose an arbitrary trivialization $\beta\in \mathbb{I}_{\cris}(R)$, which induces an isomorphism of $R$-reductive group schemes, $\mathrm{GL}(M)\cong \mathrm{GL}(\Lambda_0^*)$ and thus realizes $\sG_R\subseteq \mathrm{GL}(\Lambda_0^*)_R$ as a reductive subgroup of $\mathrm{GL}(M)$, cut out by the tensors $\mathrm{T}_{\cris}\subseteq M^\otimes$. Moreover, if we let $M=M^1\oplus M^0$ be the weight decomposition induced by the cocharacter $\tilde{\mu}$ and $\beta$.  Then we have $\Fil M=M^1\oplus p M^0$, which is nothing but the image of $M$ under the tensor preserving isomorphism $\tilde{\mu}(p): M[1/p]\to M[1/p]$. Thus, we must have $\mathrm{T}_{\cris}=\tilde{\mu}(p)(\mathrm{T}_{\cris}) \subseteq (\Fil M)^{\otimes}$. 
	\end{proof}
	
	For the statement of Proposition \ref{Thm:KeyRedSpace} below, we define the following formal $\widehat{\sS}$-scheme: 
	\[
	\widehat{\mathbb{J}}: = \mathrm{Isom}_{\mathcal{O}_{\widehat{\sS}}}\big( (\Lambda_0^* \otimes_{\Zp}\mathcal{O}_{\widehat{\sS}}, \mathrm{T}_{0}), (\Fil\mathrm{H}^1_{\cris}(\widehat{\sA}/\widehat{\sS}), \mathrm{T}_{\cris}) \big).
	\]
	
	\begin{lemma}\label{Lem:petaleTorsor}
		The $p$-adic formal scheme $\widehat{\mathbb{J}}$ over $\widehat{\sS}$ is a $\sG$-torsor in the $p$-adic étale topology.
	\end{lemma}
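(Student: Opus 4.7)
The plan is to reduce the statement to a local construction that was essentially already carried out in the proof of Lemma~\ref{Lem: CrisTens}, and to use the cocharacter $\tilde{\mu}$ as a ``twist'' to pass from a trivialization of the full $\mathcal{G}$-torsor $\mathbb{I}_\cris$ to one of the filtered version $\widehat{\mathbb{J}}$.

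First, I would reduce to the local situation: by a standard result for Kisin--Vasiu integral models, the de Rham isomorphism torsor $\mathbb{I}_{\mathrm{dR}}$ is a $\mathcal{G}$-torsor in the (even Zariski-) étale topology on $\mathcal{S}$, so via the Berthelot--Ogus comparison \eqref{Eq: BOCompar} its $p$-adic completion $\mathbb{I}_{\cris}$ is a $\mathcal{G}$-torsor on $\widehat{\sS}$ in the $p$-adic étale topology. Combining this with the fact that $\widehat{\sS}$ admits a Zariski open cover by small formal $W$-algebras $\Spf R$ equipped with a Frobenius lift $\varphi_R$ (making $(R,\varphi_R,p)$ a crystalline prism), I may pass to a $p$-adic étale cover $\Spf R'\to\Spf R$ for which $\mathbb{I}_{\cris}(R')\ne\emptyset$. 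Fix such a trivialization $\beta\in\mathbb{I}_\cris(R')$, i.e.\ a tensor-preserving isomorphism $\beta\colon (\Lambda_0^*\otimes_{\Zp}R',\mathrm{T}_0\otimes 1)\xrightarrow{\sim}(M',\mathrm{T}_\cris)$, where $M':=\mathrm{H}^1_\cris(\widehat{\sA}/\widehat{\sS})(R')$.

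The second key step is to produce a trivialization of $\widehat{\mathbb{J}}(R')$ by twisting $\beta$ with $\tilde{\mu}(p)$. Concretely, set
\[
\tilde{\beta}\ :=\ \beta\circ\tilde{\mu}(p)\colon\ \Lambda_0^*\otimes_{\Zp}R'[1/p]\ \xrightarrow{\sim}\ M'[1/p].
\]
Let $\Lambda_0^*=\Lambda_0^{*,1}\oplus\Lambda_0^{*,0}$ be the weight decomposition determined by $\tilde{\mu}$ (with the sign convention as in the proof of Lemma~\ref{Lem: CrisTens}). Then $\tilde{\mu}(p)$ carries $\Lambda_0^*\otimes R'$ isomorphically onto $\Lambda_0^{*,1}\oplus p\Lambda_0^{*,0}$ inside $\Lambda_0^*\otimes R'[1/p]$, and $\beta$ transports the resulting lattice into exactly the preimage of the Hodge filtration under $M'\twoheadrightarrow\bar{M}'$, which is $\Fil M'$ (this is precisely the content of Lemma~\ref{Lem: CrisTens}). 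Since $\tilde{\mu}(p)\in\mathcal{G}(R'[1/p])$ fixes $\mathrm{T}_0$ and $\beta$ matches $\mathrm{T}_0$ with $\mathrm{T}_\cris$, the map $\tilde{\beta}$ is tensor-preserving; hence it defines an element of $\widehat{\mathbb{J}}(R')$.

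For the final step, the $\mathcal{G}$-action on $\widehat{\mathbb{J}}$ (pre-composition of trivializations by automorphisms of $\Lambda_0^*$ fixing $\mathrm{T}_0$, i.e.\ by $\mathcal{G}$) is clearly free and transitive on fibres: any two local trivializations $\tilde{\beta},\tilde{\beta}'$ differ by $\tilde{\beta}^{-1}\circ\tilde{\beta}'\in\underline{\mathrm{Aut}}(\Lambda_0^*\otimes R',\mathrm{T}_0)=\mathcal{G}(R')$, using the Tannakian realization $\mathcal{G}=\mathrm{Stab}(\rho_0,\mathrm{T}_0)$. Combined with the local existence of $\tilde{\beta}$ just produced, this shows $\widehat{\mathbb{J}}$ is representable by a formal $\widehat{\sS}$-scheme that is $p$-adic étale locally isomorphic to $\mathcal{G}\times\widehat{\sS}$, which is the required torsor property. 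The main (and essentially only nontrivial) obstacle is verifying that $\tilde{\beta}$ lands \emph{integrally} in $\Fil M'$ rather than merely in $M'[1/p]$, and this is exactly the ingredient already isolated in Lemma~\ref{Lem: CrisTens}.
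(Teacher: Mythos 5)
Your proof is correct and takes essentially the same approach as the paper's: both reduce to a $p$-adic étale cover where $\mathbb{I}_{\cris}$ has a section $\beta$, then twist by $\tilde{\mu}(p)$ to produce a section $\tilde{\beta}$ of $\widehat{\mathbb{J}}$, with the integrality (landing in $\Fil M$) being exactly the content of Lemma~\ref{Lem: CrisTens}. You are slightly more explicit about the free-and-transitive $\mathcal{G}$-action at the end, which the paper leaves implicit, but the key computation is identical.
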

	
	\begin{proof}
		As $\mathbb{I}_\cris$ is a $p$-adic $\sG$-torsor over $\widehat{\mathcal{S}}$, we may choose a $p$-adic étale covering of $\widehat{S}$ consisting of a family of morphisms $\Spf R \to 
		\widehat{S}$, where $R$ is a crystalline $W$-algebra, such that $\mathbb{I}(R)\neq \emptyset$. Clearly, the lemma follows from the following claim: for each such $\Spf R$, we have $\mathbb{J}(R)\neq \emptyset$.
		
		To see the claim, let $M, \Fil M$ be as in the proof of Lemma \ref{Lem: CrisTens}, and $\beta \in \mathbb{I}_{\cris}(R)$. As in loc. cit., via the trivialization $\beta$, we may view $\tilde{\mu}(p)$ as an element in $\mathrm{GL}_R (M[1/p])$, inducing an isomorphism $\tilde{\mu}(p): (M, \mathrm{T}_{\cris}) \cong (\Fil M, \mathrm{T}_\cris)$. Thus $\tilde{\mu}(p) \circ \beta \in \mathbb{J}(R)$.
	\end{proof}
	
	% \begin{lemma}
		% The $\mathcal{S}_{\crispris}$-$\mathsf{space}$ $\mathbb{J}_{\crispris}$ is of reduction along the crystalline reduction map $\mathcal{S}_{\crispris}\to $\mathcal{S}_{\cris}$. Write $\widehat{\mathbb{J}}_{\cris}$ for its reduction over $\mathcal{S}_{\cris}$. Then $\widehat{\mathbb{J}}_{\cris}$ descends to a $\mathcal{G}$-torsor $\mathbb{J}$ over $\widehat{\mathcal{S}}$.
		% \end{lemma}
	
	Denote by $\widehat{\mathbb{J}}_{\cris}$ its restriction on $\sS_{\cris}$ along the inclusion $\sS_{\cris} \subseteq \widehat{\sS}_{\mathrm{ZAR}}$.
	
	\begin{lemma}\label{Lem:TorRed}
		The $\sS_{\cris}$-$\mathsf{space}$ $\widehat{\mathbb{J}}_{\cris}$ is of reduction along the defrobenius map $\sS_{\cris} \to S_{\widetilde{\cris}}$. 
	\end{lemma}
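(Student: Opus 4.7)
The plan is to produce an $S_{\widetilde{\cris}}$-$\mathsf{space}\ \mathrm{J}_{\widetilde{\cris}}$ together with a canonical isomorphism $\widehat{\mathbb{J}}_{\cris}\cong \mathrm{Red}^{*}\mathrm{J}_{\widetilde{\cris}}$ of $\mathcal{S}_{\cris}$-$\Spaces$. The guiding principle is that everything in the definition of $\widehat{\mathbb{J}}$ is built from crystalline data on $\widehat{S}$, so it should descend from a point $(R,x:\Spf R\to \widehat{\mathcal{S}})$ of $\mathcal{S}_{\cris}$ to its reduction $(R,\bar{x}:\mathrm{Spec}\bar{R}\to S)$ in $S_{\widetilde{\cris}}$.

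First, for each $(R,\bar{x})\in S_{\widetilde{\cris}}$ I would set $M_{R,\bar{x}}:=\mathbb{D}^{*}(\mathcal{A}_{\bar{x}})(R\twoheadrightarrow \bar{R})$, which makes sense because $R\twoheadrightarrow \bar{R}$ is canonically a PD-thickening (the divided powers $\gamma_{n}(px)=p^{n}x^{n}/n!$ exist on $(p)\subseteq R$ since $R$ is $p$-torsion free and $p\geq 3$), and because $\mathbf{R}^{1}\pi_{\mathrm{CRIS},*}\mathcal{O}^{\cris}_{\mathcal{A}}$ is a crystal. This construction is functorial in $(R,\bar{x})$, so defines a locally free sheaf on $S_{\widetilde{\cris}}$. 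Next, the Hodge filtration $\omega_{\mathcal{A}_{\bar{x}}/\bar{R}}\subseteq \mathrm{H}^{1}_{\mathrm{dR}}(\mathcal{A}_{\bar{x}}/\bar{R})\cong M_{R,\bar{x}}/pM_{R,\bar{x}}$ depends only on $\bar{x}$; define $\Fil M_{R,\bar{x}}\subseteq M_{R,\bar{x}}$ as its preimage under the reduction map. This mirrors the local description in the proof of Lemma~\ref{Lem: CrisTens}.

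Third, I would show that the tensors descend: the collection $\mathbb{T}_{\cris}\subseteq \mathrm{H}^{1}_{\cris}(\widehat{\mathcal{A}}/\widehat{\mathcal{S}})^{\otimes}$ is, by construction via the Berthelot-Ogus isomorphism from $\mathbb{T}_{\mathrm{dR}}$ (and ultimately from $\mathbb{T}_{\prism}$ or $\omega_{\mathsf{K},\text{\'et}}$), a section of a crystal. Hence its evaluation $\mathbb{T}_{\cris,R,\bar{x}}\subseteq M_{R,\bar{x}}^{\otimes}$ depends only on $\bar{x}$ and on the PD-thickening $R\twoheadrightarrow\bar{R}$, not on the chosen formal lift $x:\Spf R\to \widehat{\mathcal{S}}$. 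Together with Lemma~\ref{Lem: CrisTens}, which places these tensors inside $(\Fil M_{R,\bar{x}})^{\otimes}$, this lets me define
\[
\mathrm{J}_{\widetilde{\cris}}(R,\bar{x}):=\mathrm{Isom}_{R}\bigl((\Lambda_{0}^{*}\otimes_{\Zp}R,\mathbb{T}_{0}\otimes 1),(\Fil M_{R,\bar{x}},\mathbb{T}_{\cris,R,\bar{x}})\bigr)
\]
as a functor on $S_{\widetilde{\cris}}$. The canonical isomorphism $\widehat{\mathbb{J}}_{\cris}\cong \mathrm{Red}^{*}\mathrm{J}_{\widetilde{\cris}}$ then comes, on each point $(R,x)$ of $\mathcal{S}_{\cris}$, from the crystalline-de Rham identification \eqref{Eq: BOCompar} matching $(\Fil \mathrm{H}^{1}_{\cris}(\widehat{\mathcal{A}}/\widehat{\mathcal{S}}),\mathbb{T}_{\cris})$ evaluated at $x$ with $(\Fil M_{R,\bar{x}},\mathbb{T}_{\cris,R,\bar{x}})$.

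The main obstacle I anticipate is the third step: checking that $\mathbb{T}_{\cris}$ is \emph{truly crystalline} in the sense of being independent of the formal lift $x$ of $\bar{x}$. The tensors are constructed first on $\mathrm{H}^{1}_{\prism}$ (then transported to $\mathrm{H}^{1}_{\cris}$ via the prismatic-crystalline comparison), and while the prismatic cohomology is itself a crystal on $\widehat{\mathcal{S}}_{\prism}$, one still needs to verify naturality under the base-change maps induced by different lifts of the same $\bar{x}$. I would handle this by working locally with a small affine open $\Spf R\subseteq\widehat{\mathcal{S}}$ equipped with two Frobenius lifts, using the local description in Lemma~\ref{Lem: CrisTens} together with the prismatic-crystalline comparison to identify the two evaluations of $\mathbb{T}_{\cris}$ canonically, then globalize via fpqc descent on $S_{\widetilde{\cris}}$.
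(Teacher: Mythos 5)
Your proposal follows essentially the same route as the paper, which dispatches the lemma in one sentence by observing that both the filtered module $\Fil\mathrm{H}^1_{\cris}(\widehat{\sA}/\widehat{\sS})$ (restricted to $\sS_{\cris}$) and the tensors $\mathbb{T}_{\cris}$ are determined by the Dieudonn\'e crystal $\mathbb{D}^*(\sA/S)$ attached to the special fiber, citing \cite[3.6]{Yan18} for details. You have essentially expanded that citation into an explicit construction of the $S_{\widetilde{\cris}}$-$\Space$: evaluating the crystal at the PD-thickening $R\twoheadrightarrow\bar{R}$, defining the filtration by pullback of the Hodge filtration, and observing the tensors are sections of a crystal. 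That is the intended argument. One small remark on framing: the obstacle you isolate in the last paragraph is the right one (independence of the lift $x$ of $\bar{x}$), and the resolution is simply crystalline descent --- a section of a crystal on $(S/W)_{\cris}$ evaluated at $R\twoheadrightarrow\bar{R}$ depends only on that PD-thickening and $\bar{x}$, no detour through Frobenius lifts or the prismatic comparison is required. Your suggestion to ``work locally with two Frobenius lifts'' slightly conflates the present reduction ($\sS_{\cris}\to S_{\widetilde{\cris}}$, forgetting the lift $x$) with the \emph{defrobenius} reduction $S_{\crispris}\to S_{\widetilde{\cris}}$, which is a separate step handled in Theorem~\ref{Thm:KeyRedSpace}; the crystal structure alone settles the present lemma.
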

	
	\begin{proof}
		As our integral model $\mathcal{S}$ is fixed, the $\sS_{\crispris}$-$\mathsf{space}$ $\Fil\mathrm{H}_{\cris}^1(\widehat{\sA}/\widehat{\sS})|_{\sS_{\cris}}$ is determined by the Dieudonn\'e crystal $\mathbb{D}^* (\sA/S)$ of the universal abelian scheme $\sA$ over $S$, and the crystalline tensors $\mathrm{T}_{\cris}$ are determined by the pair $\sA/S$ as well: see e.g. cite[3.6]{Yan18} for a slightly detailed discussion.
	\end{proof}
	We still denote by $\widehat{\mathbb{J}}_{\cris}$ the reduction to $S_{\widetilde{\cris}}$ of $\widehat{\mathbb{J}}_{\cris}$ along $\sS_{\cris} \to S_{\widetilde{\cris}}$. 
	
	\begin{theorem}
		\label{Thm:KeyRedSpace}
		The $S_{\crispris}$-$\Spaces$
		\(
		\mathbb{J}_{\crispris}\) is of reduction by $ \widehat{\mathbb{J}}_{\cris}$ along the reduction map $S_{\crispris} \to S_{\widetilde{\cris}}$. 
	\end{theorem}
	\begin{proof}
		For a point $\underline{\bar{x}}=(\underline{R}, \bar{x}: \mathrm{Spec} \bar{R} \to S)$ in $S_{\crispris}$. Write $(M_{\prism}, \varphi_{M_{\prism}})$ for the pair $(\mathrm{H}^1_{\prism}(\sA/\underline{R}), \varphi_{\mathrm{H}^1_{\prism}(\sA/\underline{R})})$, and let $M = \mathbb{D}^*(\sA_{\bar{R}})(R)$ be as in the proof of Lemma \ref{Lem: CrisTens}. Then attached to the Dieudonn\'e crystal is the Dieudonn\'e module $(M, \mathrm{F}, \mathrm{V})$, where 
		\(
		\mathrm{F}: \varphi^* M \to M \) and \( \mathrm{V}: M \to \varphi^* M
		\)
		are $R$-linear maps such that $\mathrm{V} \circ \mathrm{F} = p \cdot \mathrm{id}_{\varphi^*M}$ and $\mathrm{F} \circ \mathrm{V} = p \cdot \mathrm{id}_{M}$.
		We prove the proposition by establishing the isomorphism,
		\begin{equation}\label{Eq: CrisPrisVsCris}
			(M_{\prism}, \mathrm{T}_{\prism})\cong (\Fil M, \mathrm{T}_{\cris}).
		\end{equation}
		
		As the prismatic $F$-crystal 
		\(
		(\mathfrak{M}_{\widehat{\sA}/\widehat{\sS}}, \varphi_{\mathfrak{M}_{\widehat{\sA}/\widehat{\sS}}})
		\)
		is minuscule, there exists a unique $R$-linear map $\psi_{M_{\prism}}: M_{\prism} \to \varphi_R^* M_{\prism}$ such that $\psi_{M_{\prism}} \circ \varphi_{M_{\prism}} = p \cdot \mathrm{id}_{\varphi^*M_{\prism}}$ and $\varphi_{M_{\prism}} \circ \psi_{M_{\prism}} = p \cdot \mathrm{id}_{M_{\prism}}$.
		The relation between the quadruples $(M, \mathrm{F}, \mathrm{V}, \mathrm{T}_\cris)$ and $(M_{\prism}, \varphi_{M_{\prism}}, \psi_{M_{\prism}}, \mathrm{T}_{\prism})$ is that we have the identification, 
		\[
		(M, \mathrm{F}, \mathrm{V}, \mathrm{T}_{\cris}) = \varphi_R^* (M_{\prism}, \varphi_{M_{\prism}}, \psi_{M_{\prism}}, \mathrm{T}_{\prism}).
		\] 
		Moreover, we also have identification of filtered $R$-modules,
		\(
		(\mathrm{Im}(\psi_{M_{\prism}}) \subseteq \varphi^*_R M_{\prism}) = (\Fil M \subseteq M).
		\)
		As the map $\psi_{M_{\prism}}: M_{\prism} \to \varphi_R^* M_{\prism} = M$ intertwines the tensors $\mathrm{T}_{\prism} \subseteq (\Fil M)_{\prism}^\otimes$ with the tensors $\mathrm{T}_{\cris} \subseteq M^\otimes$, it induces the following isomorphism,
		\(
		\psi_{M_{\prism}}: (M_{\prism}, \mathrm{T}_{\prism}) \cong (\Fil M, \mathrm{T}_{\cris}).
		\) 
	\end{proof}
	
	\begin{remark}\label{Rmk: FilMDieuMod}
		It is well-known fact from Dieudonn\'e theory that the Frobenius map $\mathrm{F}: \varphi^* M\to M$ induces a bijection
		\(
		\mathrm{F}|_{\varphi^* \Fil M}: \varphi^* \Fil M \cong pM.  
		\)
		This is well-known (see for example \cite[Lemma 2.9]{Yan18} for a proof). From this one sees that the submodule $\Fil M \subseteq M$ is stable under $\mathrm{F}$ and $\mathrm{V}$ and hence admits a Frobenius structure 
		\[
		\mathrm{F}': \varphi^* \Fil M \to \Fil M, \text{ and } \mathrm{V}': \Fil M \to \varphi^* \Fil M,
		\]
		such that $\mathrm{V}' \circ \mathrm{F}' = p \cdot \mathrm{id}_{\varphi^*\Fil M}$ and $\mathrm{F}' \circ \mathrm{V}' = p \cdot \mathrm{id}_{\Fil M}$. For future reference, we write
		\begin{equation}\label{Eq:zipisom}
			\gamma_{\underline{\bar{x}}}:= \frac{1}{p}\mathrm{F}|_{\varphi^* \Fil M}: \varphi^* \Fil M \cong M.
		\end{equation}
		for this divided Frobenius isomorphism.
	\end{remark}
	
	Denote by $\mathrm{J}$ the special fiber of $\widehat{\mathbb{J}}$ and $\mathrm{J}_{\cris}$ the restriction of $\mathrm{J}$ on $S_{\cris}$ along the inclusion $S_{\cris} \subseteq S_{\mathrm{ZAR}}$.  Denote by $\mathrm{J}_{\crispris} = \mathbb{J}_{\crispris} \otimes_W \kappa$ the presheaf on $S_{\crispris}$ given by associating each point $\underline{\bar{x}} = (\underline{R}, \bar{x})$ of $S_{\crispris}$ the set of global sections of the $\bar{R}$-scheme,
	\[
	\mathrm{J}_{\crispris, \underline{\bar{x}}}:=\underline{\mathrm{Isom}}\big((\Lambda_0 \otimes_{\Zp}R, \mathrm{T}_0 \otimes 1) \otimes_R \bar{R},  (\mathrm{H}^1_{\prism}(\sA_{\bar{x}}/\underline{R}), \mathrm{T}_\prism) \otimes_R \bar{R} \big). 
	\]
	The following corollary is an immediate consequence of Theorem \ref{Thm:KeyRedSpace}. 
	
	\begin{corollary}
		The $S_{\crispris}$-$\Spaces$ is of reduction by 
		\(
		\mathrm{J}_{\cris}
		\) along the reduction map $S_{\crispris} \to S_{\cris}$.
	\end{corollary}

	\section{Reduction of prismatic Frobenius period maps}\label{S:RedFrobPerMap}
	Throughout the remainder of this article, we focus on reductions of the Frobenius period maps established in \S~\ref{S:FrobPerMap}. By definition, reductions of \emph{maps} include those of \emph{$\Spaces$}, the latter having been treated in \S~\ref{S:RepProof} and \S~\ref{S:RedGTorsor}.
	
	\subsection{Reduction of Breuil-Kisin prismatic Frobenius period map}
	
	Set $\mathcal{L}_{\BKcrispris}^{\dagger}\mathcal{G}$ to be the kernel of the following composition of canonical reduction maps of $W_{\crispris}$-sheaves, 
	\[
	\mathcal{L}^+_{\BKcrispris}\mathcal{G} \xrightarrow{\mathrm{Red}^{\BK}_{\cris}} \mathcal{L}^+_{\crispris}\mathcal{G} \xrightarrow{\text{mod } p} \sG_{\crispris}=G_{\crispris}. 
	\]
	Denote by $\mathcal{C}^{\mu}_{\cris}$ and $ (\prescript{}{1}{\mathcal{C}_{ 1}^{\mu}})_{\cris} $ the restriction on $\kappa_{\cris}\subseteq \kappa_{\mathrm{ZAR}}$ of the $\kappa$-schemes $\mathcal{C}^{\mu}$ and $\prescript{}{1}{\mathcal{C}_{1}^{\mu}}$ respectively, and by $(\mathcal{C}^{\mu})_{\crispris}$ and $ (\prescript{}{1}{\mathcal{C}_{ 1}^{\mu}})_{\crispris} $ their further restriction on $W_{\crispris}$. Then clearly the canonical mod $p$ reduction map $\mathbf{C}^{\tilde{\mu}}_{\BKcrispris}\to \mathcal{C}^{\mu}_{\crispris}$ of $W_{\crispris}$-$\Spaces$ induces a map on quotient $\Spaces$: 
	\(\mathcal{L}_{\BKcrispris}^{\dagger}\mathcal{G}\backslash\mathbf{C}_{\BKcrispris}^{\tilde{\mu}}/\mathcal{L}_{\BKcrispris}^{\dagger}\mathcal{G}\to (\prescript{}{1}{\mathcal{C}_{ 1}^{\mu}})_{\crispris},
	\)
	where $\mathcal{L}_{\BKcrispris}^{\dagger}\mathcal{G}\backslash\mathbf{C}_{\BKcrispris}^{\tilde{\mu}}/\mathcal{L}_{\BKcrispris}^{\dagger}\mathcal{G}$ denotes the quotient sheaf of $\mathbf{C}_{\BKcrispris}^{\tilde{\mu}}$ under the action of $\mathcal{L}_{\BKcrispris}^{\dagger}\mathcal{G} \times \mathcal{L}_{\BKcrispris}^{\dagger}\mathcal{G}$ by componentwise multiplication on the left and right.
	% induced by the morphism of $\underline{\mathfrak{S}}$-prisms $ \underline{\mathfrak{S}_{R}} \to \underline{R}$ and the canonical reduction modulo $p$ map
	% \[
	%\sG_{\tilde{\prism}_{\cris}}(\underline{R})=\sG(R)\to G(R/p)= G_{\crispris}(\underline{R}).
	%\]
	% Then the canonical reduction map $\mathbb{J}_{\BKcrispris}\xrightarrow{\mathrm{Red}^{\BK}_{\cris}} \mathbb{J}_{\crispris}\xrightarrow{\text{mod } p} \mathrm{J}_{\crispris}$ becomes a $\sG_{\BKcrispris}^{[\infty]}$-torsor(c.f. Diagram \eqref{Eq: PrisPerMapDiagm}). 
	
	\begin{lemma}
		The map $\xi_{\BKcrispris}^{\sharp}: \mathbb{J}_{\BKcrispris} \longrightarrow \mathbf{C}_{\BKcrispris}^{\tilde{\mu}}$ induces a morphism of $W_{\crispris}$-$\mathsf{spaces}$, 
		\[
		\rho^{\sharp}_{\BKcrispris}:  \mathrm{J}_{\crispris}\longrightarrow \mathcal{L}_{\BKcrispris}^{\dagger}\mathcal{G}\backslash\mathbf{C}_{\BKcrispris}^{\tilde{\mu}}/\mathcal{L}_{\BKcrispris}^{\dagger}\mathcal{G}\xrightarrow{\text{mod } p} (\prescript{}{1}{\mathcal{C}_{1}^{\mu}})_{\crispris}. 
		\]
		And hence a morphism of $W_{\crispris}$-stacks, 
		\(
		\rho_{\BKcrispris}: S_{\crispris}\longrightarrow [\prescript{}{1}{\mathcal{C}_{1}^{\mu}}/\mathrm{Ad}_{\varphi} G]_{\crispris}.
		\)
	\end{lemma}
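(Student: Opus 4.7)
The plan is to read off $\rho^{\natural}_{\BKcrispris}$ by tracing the map $\xi^{\natural}_{\BKcrispris}$ through two successive descents: first modulo $t$ (which is already recorded in Lemma~\ref{Lem: PrisPerMapDiagm}) and then modulo $p$, so that the net quotient on the source is by the kernel $\mathcal{L}^{\dagger}_{\BKcrispris}\mathcal{G}$ of $\mathcal{L}^{+}_{\BKcrispris}\mathcal{G}\to G_{\crispris}$, and on the target by the $\mathcal{L}^{\dagger}_{\BKcrispris}\mathcal{G}$-bi-action. The $\mathrm{Ad}_{\varphi}$-equivariance of $\xi^{\natural}_{\BKcrispris}$ converts into a bi-action equivariance at the end precisely because $\mathcal{L}^{\dagger}_{\BKcrispris}\mathcal{G}$ is $\varphi$-stable.

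First I would verify Frobenius stability: the composition $\mathcal{G}(R[[t]])\to\mathcal{G}(R)\to G(\bar R)$ intertwines Frobenius (the two reductions commute with $t\mapsto t^{p}$, with $\varphi_{R}$, and with absolute Frobenius on $\bar R$), so $\mathcal{L}^{\dagger}_{\BKcrispris}\mathcal{G}$ is normal and $\varphi$-stable. Consequently, if $g\in\mathcal{L}^{\dagger}_{\BKcrispris}\mathcal{G}$ then $g^{-1}\cdot(-)\cdot\varphi(g)$ preserves $\mathcal{L}^{\dagger}_{\BKcrispris}\mathcal{G}$-bi-cosets in $\mathbf{C}^{\tilde\mu}_{\BKcrispris}$. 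Next, given $\underline{\bar x}=(\underline R,\bar x)\in S_{\crispris}$ and $\bar\beta\in\mathrm{J}_{\crispris}(\underline{\bar x})$, I produce, after possibly passing to a flat cover of $\underline R$, a lift $\tilde\beta\in\mathbb{J}_{\BKcrispris}(\underline{\mathfrak{S}_R},x)$ of $\bar\beta$: the $p$-adic formal smoothness of $\widehat{\mathcal{S}}/W$ lets me lift $\bar x$ to $x:\Spf R\to\widehat{\mathcal{S}}$ successively on $R/p^{n}$, and $\mathbb{J}_{\BKcrispris}$ is an $\mathcal{L}^{+}_{\BKcrispris}\mathcal{G}$-torsor, so a local section exists. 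Any two such lifts differ locally by an element of $\mathcal{L}^{\dagger}_{\BKcrispris}\mathcal{G}$, and by the $\mathrm{Ad}_{\varphi}$-equivariance of Lemma~\ref{Lem: DefBKPerMap} the images $\xi^{\natural}_{\BKcrispris}(\tilde\beta)$ lie in the same $\mathcal{L}^{\dagger}_{\BKcrispris}\mathcal{G}$-bi-coset; this well-definedness, combined with descent in the flat topology on $W_{\crispris}$, yields a map of $W_{\crispris}$-$\Spaces$
\[
\mathrm{J}_{\crispris}\longrightarrow \mathcal{L}^{\dagger}_{\BKcrispris}\mathcal{G}\big\backslash\mathbf{C}^{\tilde\mu}_{\BKcrispris}\big/\mathcal{L}^{\dagger}_{\BKcrispris}\mathcal{G}.
\]

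Then I compose with the natural mod $p$ arrow. Concretely, on an object $\underline R\in W_{\crispris}$, the reduction $R[[t]]\twoheadrightarrow\bar R[[t]]$ sends $E=t+p$ to $t$, hence sends $\mathcal{G}(R[[t]])\tilde\mu(E)\mathcal{G}(R[[t]])$ into $G(\bar R[[t]])\mu(t)G(\bar R[[t]])=\mathcal{C}^{\mu}(\bar R)$, and sends $\mathcal{L}^{\dagger}_{\BKcrispris}\mathcal{G}(\underline R)$ onto $\mathcal{L}^{1}G(\bar R)$. Dividing by the bi-action produces the canonical map $\mathcal{L}^{\dagger}_{\BKcrispris}\mathcal{G}\backslash\mathbf{C}^{\tilde\mu}_{\BKcrispris}/\mathcal{L}^{\dagger}_{\BKcrispris}\mathcal{G}\to(\prescript{}{1}{\mathcal{C}_{1}^{\mu}})_{\crispris}$, and composition gives the desired $\rho^{\natural}_{\BKcrispris}$. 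Finally, $\mathrm{J}_{\crispris}\to S_{\crispris}$ is a $G_{\crispris}$-torsor (Theorem~\ref{Thm:KeyRedSpace} identifies $\mathrm{J}_{\crispris}$ with the restriction of the $\kappa$-scheme $\mathrm{J}=\widehat{\mathbb{J}}_{\kappa}$), and $\rho^{\natural}_{\BKcrispris}$ is $G_{\crispris}$-equivariant for the $\mathrm{Ad}_{\varphi}$-action on the target (this is the residual shadow of the $\mathcal{L}^{+}_{\BKcrispris}\mathcal{G}$-equivariance, surviving the two-sided quotient via the $\varphi$-stability from the first step); this yields the stack map $\rho_{\BKcrispris}:S_{\crispris}\to[\prescript{}{1}{\mathcal{C}_{1}^{\mu}}/\mathrm{Ad}_{\varphi}G]_{\crispris}$.

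The main technical obstacle is the second step: producing a lift $\tilde\beta\in\mathbb{J}_{\BKcrispris}$ of $\bar\beta\in\mathrm{J}_{\crispris}$ that is natural enough to sheafify. In general one must pass to a flat cover of $\underline R$ both to lift $\bar x$ and to trivialize the $\mathcal{L}^{+}_{\BKcrispris}\mathcal{G}$-torsor $\mathbb{J}_{\BKcrispris}$; this is precisely why the target in Definition~\ref{Def:IntPerMap} (and here in the lemma) is formed from the sheafification $\mathbf{C}^{\tilde\mu}_{\BKcrispris}$ of the naive double-coset presheaf, and why Lemma~\ref{Lem: BKFrobfine} was recorded. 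Once the local construction is in place, the descent formalism of \S\ref{S:LanguagueSpace}--\S\ref{S:ReductionTheory} and the commutativity of the diagram in Lemma~\ref{Lem:  PrisPerMapDiagm} do the rest automatically.
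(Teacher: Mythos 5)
Your strategy coincides with the paper's: the paper's proof is a one‑line reduction to Lemma~\ref{Lem:  PrisPerMapDiagm} together with the observation that the projection $\mathbb{J}_{\BKcrispris}\to\mathrm{J}_{\crispris}$ is a $\mathcal{L}^{\dagger}_{\BKcrispris}\mathcal{G}$‑torsor, and the three ingredients you isolate --- the (pseudo‑)torsor structure of the projection, the $\mathrm{Ad}_{\varphi}$‑equivariance of $\xi^{\natural}_{\BKcrispris}$, and the $\varphi$‑stability of $\mathcal{L}^{\dagger}_{\BKcrispris}\mathcal{G}$ --- are exactly the three facts the paper invokes in the proof of Lemma~\ref{Lem:  PrisPerMapDiagm}.

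One sentence deserves scrutiny: \emph{``Any two such lifts differ locally by an element of $\mathcal{L}^{\dagger}_{\BKcrispris}\mathcal{G}$.''} This is literally correct only when the lift $x:\Spf R\to\widehat{\mathcal{S}}$ of $\bar x$ is held fixed. If $x_{1}\ne x_{2}$ both lift $\bar x$, the trivializations $\tilde\beta_{1}$ and $\tilde\beta_{2}$ live in the $\mathfrak{S}_{R}$‑points of torsors for two \emph{a priori different} Breuil--Kisin modules $\mathfrak{M}_{x_{1}},\mathfrak{M}_{x_{2}}$, so they are not related by the $\mathcal{L}^{\dagger}_{\BKcrispris}\mathcal{G}$‑action. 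Well‑definedness across different lifts of $\bar x$ is precisely where the crystal property enters: after reduction modulo $t$ (Lemma~\ref{Lem:ModtIdentftion}), both $(\mathfrak{M}_{x_{i}},\varphi_{x_{i}})$ specialize to the \emph{same} crystalline prismatic pair $(M_{\underline{\bar x}},\varphi_{\underline{\bar x}})$ over $\underline{R}$, and the two reductions $\tilde\beta_{i}\bmod t$ then do differ by an element of $\mathcal{L}^{1}_{\crispris}\mathcal{G}(\underline{R})$, whence agreement modulo $\mathcal{L}^{\dagger}$ and modulo $p$. In other words, the clean route is the two‑step one you announce at the top (mod $t$ via Lemma~\ref{Lem:  PrisPerMapDiagm}, then mod $p$), which avoids ever having to compare sections over different lifts $x$; the direct lifting in your second paragraph conflates the two steps and therefore needs the crystal identification to close. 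Since you do ultimately invoke Lemma~\ref{Lem:  PrisPerMapDiagm} and the descent formalism, the argument goes through, but the imprecise phrase should be replaced by an explicit appeal to Lemma~\ref{Lem:ModtIdentftion} or, more simply, by factoring through $\mathbb{J}_{\crispris}$ as the paper does.
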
 
	\begin{proof}
		The proof is similar to that of Lemma \ref{Lem:  PrisPerMapDiagm}, noticing that the canonical projection map $\mathbb{J}_{\BKcrispris} \to \mathrm{J}_{\crispris}$, viewed as a map of $W_{\crispris}$-$\Spaces$ through the structure map $S_{\crispris}\to W_{\crispris}$, is a $\mathcal{L}^{\dagger}_{\BKcrispris}\mathcal{G}$-torsor. 
	\end{proof}
	
	\subsection{Reduction of crystalline prismatic Frobenius period map}\label{S:RedCryPrisTor}
	Write
	$(\prescript{}{1}{ \mathbf{C}_{1}^{\tilde{\mu}}})_{\crispris}$ for the restriction to $W_{\crispris}$ of $(\prescript{}{1}{ \mathbf{C}_{1}^{\tilde{\mu}}})_{\cris}$ along the defrobenius reduction map $W_{\crispris} \to W_{\cris}$. Thanks to Theorem \ref{Thm:ExtdRepthm}, we are allowed to use the notation $(\prescript{}{1}{ \mathbf{C}_{1}^{\tilde{\mu}}})_{\crispris}$ instead of $(\prescript{}{1}{ \mathbf{C}_{1}^{\tilde{\mu}}})_{\tilde{\prism}_\cris}$. Then clearly the $\varphi$-conjugation action of $\mathcal{L}^+_{\crispris}\mathcal{G}$ on $(\prescript{}{1}{ \mathbf{C}_{1}^{\tilde{\mu}}})_{\crispris}$ factors through the quotient map $\mathcal{L}^+_{\crispris}\mathcal{G}\to \mathcal{G}_{\crispris} $. Denote by $[\prescript{}{1}{ \mathbf{C}_{1}^{\tilde{\mu}}}/\mathrm{Ad}_{\varphi} G]_\crispris$  the resulting quotient stack \footnote{Here we write $[\prescript{}{1}{ \mathbf{C}_{1}^{\tilde{\mu}}}/\mathrm{Ad}_{\varphi} G]_\crispris$ instead of $[(\prescript{}{1}{ \mathbf{C}_{1}^{\tilde{\mu}}})_{\crispris}/\mathrm{Ad}_{\varphi} G_{\crispris}]$ since Theorem \ref{Thm:ExtdRepthm} suggests that the quotient stack exists already in the Zariski topology.} over $W_{\crispris}$. 
	
	Since the canonical reduction map $\mathbb{J}_{\crispris}\to \mathrm{J}_{\crispris}$ is a $\mathcal{L}^{1}_{\crispris}\mathcal{G}$-torsor, the following lemma is clear. 
	\begin{lemma}
		The  $\mathcal{L}^+_{\crispris}\mathcal{G}$-equivariant  map $\xi^{\sharp}_{\crispris}: \mathbb{J}_{\crispris}\to \mathbf{C}^{\tilde{\mu}}_{\tilde{\prism}_\cris}$ induces a $\mathcal{G}_{\crispris}$-equivariant morphism \(
		\rho_{\crispris}^{\sharp}:\mathrm{J}_{\crispris}\to (\prescript{}{1}{ \mathbf{C}_{1}^{\tilde{\mu}}})_\crispris
		\) of $W_{\crispris}$-$\mathsf{spaces}$,
		inducing a morphism of $W_{\crispris}$-stacks,
		\[
		\rho_{\crispris}:~S_{\crispris}~\to~ [\prescript{}{1}{ \mathbf{C}_{1}^{\tilde{\mu}}}/\mathrm{Ad}_{\varphi} G]_\crispris.
		\]
	\end{lemma}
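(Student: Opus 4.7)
The plan is to transpose the proof of Lemma~\ref{Lem:  PrisPerMapDiagm} from the Breuil-Kisin setting to the crystalline one; indeed the author flags the assertion as ``essentially tautological,'' and the three ingredients used there have direct analogues here. Concretely, I would invoke: (i) \(\mathbb{J}_{\crispris} \to \mathrm{J}_{\crispris}\), viewed as a map of \(W_{\crispris}\)-\(\Spaces\), is an \(\mathcal{L}^1_{\crispris}\mathcal{G}\)-torsor (this is stated in the paragraph preceding the lemma); (ii) \(\xi^{\natural}_{\crispris}\) is \(\mathcal{L}^+_{\crispris}\mathcal{G}\)-\(\varphi\)-equivariant by Lemma~\ref{Lem:DefCrisPerMap}; (iii) the subgroup \(\mathcal{L}^1_{\crispris}\mathcal{G} \subseteq \mathcal{L}^+_{\crispris}\mathcal{G}\) is \(\varphi\)-stable, because any Frobenius lift on a crystalline prism preserves the ideal \((p)\), hence sends the kernel of reduction mod~\(p\) into itself.

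The core verification is that \(\Pr \circ \xi^{\natural}_{\crispris}\) is constant on the \(\mathcal{L}^1_{\crispris}\mathcal{G}\)-orbits of \(\mathbb{J}_{\crispris}\). For a point \(\underline{\bar{x}} = (\underline{R}, \bar{x}) \in S_{\crispris}\), a trivialization \(\beta \in \mathbb{J}_{\crispris,\,\underline{\bar{x}}}(R)\), and \(g \in \mathcal{L}^1_{\crispris}\mathcal{G}(\underline{R})\), a direct computation, using that \(\Lambda_0^*\) is defined over \(\Zp\) and hence \(\varphi^*(\Lambda_0^* \otimes_{\Zp} R) = \Lambda_0^* \otimes_{\Zp} R\), gives
\[
\xi^{\natural}_{\crispris}(\beta \cdot g) \;=\; (\beta g)^{-1}\, \varphi_{\underline{\bar{x}}}\, \varphi^*(\beta g) \;=\; g^{-1}\, \xi^{\natural}_{\crispris}(\beta)\, \varphi(g).
\]
By (iii), both \(g^{-1}\) and \(\varphi(g)\) lie in \(\mathcal{L}^1_{\crispris}\mathcal{G}(\underline{R})\), so \(\xi^{\natural}_{\crispris}(\beta \cdot g)\) and \(\xi^{\natural}_{\crispris}(\beta)\) represent the same class in the double-coset quotient \(\mathcal{L}^1_{\crispris}\mathcal{G} \backslash \mathbf{C}^{\tilde{\mu}}_{\tilde{\prism}_{\cris}} / \mathcal{L}^1_{\crispris}\mathcal{G}\), which by Theorem~\ref{Thm:ExtdRepthm} is exactly \((\prescript{}{1}{\mathbf{C}_{1}^{\tilde{\mu}}})_{\crispris}\). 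This yields the factorization \(\rho^{\natural}_{\crispris}\).

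For the \(\mathcal{G}_{\crispris}\)-equivariance and the descent to stacks, I would note that the residual right action of \(\mathcal{L}^+_{\crispris}\mathcal{G}\) on \(\mathrm{J}_{\crispris}\) factors through the quotient \(\mathcal{L}^+_{\crispris}\mathcal{G} \to \mathcal{G}_{\crispris}\), and symmetrically the \(\varphi\)-conjugation action of \(\mathcal{L}^+_{\crispris}\mathcal{G}\) on \((\prescript{}{1}{\mathbf{C}_{1}^{\tilde{\mu}}})_{\crispris}\) descends to \(\mathcal{G}_{\crispris}\) by the same \(\varphi\)-stability; passing to quotient stacks produces \(\rho_{\crispris} : S_{\crispris} \to [\prescript{}{1}{\mathbf{C}_{1}^{\tilde{\mu}}}/\mathrm{Ad}_{\varphi} G]_{\crispris}\). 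There is no substantive obstacle here: all the real work is done in Theorem~\ref{Thm:ExtdRepthm} and the definitional torsor structures, which is precisely why the author records this as essentially tautological.
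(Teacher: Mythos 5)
Your proposal is correct and follows essentially the same route the paper sketches: the paper itself declares the lemma ``essentially tautological'' after laying out the same three ingredients you cite, and your explicit double-coset computation simply unpacks why the Breuil--Kisin argument from Lemma~\ref{Lem:  PrisPerMapDiagm} transposes word-for-word to the crystalline case. The only cosmetic imprecision is your attribution to Theorem~\ref{Thm:ExtdRepthm}: what that theorem actually supplies is representability of the double-coset sheaf by a scheme (whence the notation $(\prescript{}{1}{\mathbf{C}_{1}^{\tilde{\mu}}})_{\crispris}$ as a restriction along the defrobenius map makes sense), not the identification of the quotient with the double coset, which holds by definition; this does not affect the validity of the argument.
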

	
	% \begin{remark}
		% 	The following commutative diagram summarizes various reductions of the crystalline Frobenius period maps $\xi_{\crispris}^{\sharp}$: 
		% 	\[
		% 	\xymatrix{
			% 		& \mathbb{J}_{\crispris} \ar[rr]^{\xi^\sharp_{\crispris}} \ar'[d][dd]_{\mathcal{L}^+_{\crispris}\mathcal{G}}\ar[dl]_{\mathcal{L}^1_{\crispris}\mathcal{G}} && \mathbf{C}^{\tilde{\mu}}_{\tilde{\prism}_\cris} \ar[dd]\ar[dl] \\
			% 		\mathrm{J}_{\crispris} \ar[rr]^{\quad \rho^{\sharp}_{\crispris}} \ar[dd]_{\mathcal{G}_{\crispris}} && (\prescript{}{1}{}{\mathbf{C}_1^{\tilde{\mu}}})_{\crispris}  \ar[dd] \\
			% 		& S_{\crispris}\ar@{=}[dl]\ar'[r][rr]^{\xi_{\crispris}\quad \quad \quad } && [\mathbf{C}^{\tilde{\mu}}/\mathrm{Ad}_{\varphi}\mathcal{G}]_{\crispris} \ar[dl]\\
			% 		S_{\crispris}\ar[rr]^{\rho_{\crispris}} && [\prescript{}{1}{}{\mathbf{C}_1^{\tilde{\mu}}/\mathrm{Ad}_{\varphi}G}]_{\crispris},
			% 	}
		% 	\]
		% 	where maps decorated by $\mathcal{L}^+_{\crispris}\mathcal{G}$, $\mathcal{L}^1_{\crispris}\mathcal{G}$, and $\mathcal{G}_{\crispris}$ are torsors w.r.t. the corresponding group $\Spaces$. 
		% \end{remark} 
	
	Denote by $\epsilon_{\crispris}$ the restriction along $W_{\crispris}\!\to\! W_{\cris}$ of the isomorphism $(\prescript{}{1}{\mathbf{C}_1^{\tilde{\mu}}})_{\cris}\cong(\prescript{}{1}{\mathcal{C}_1^{\mu}})_{\cris}$ in \eqref{Eq:2CC1Isom}.
	
	\begin{theorem}
		We have $\rho_{\BKcrispris}=\epsilon_{\crispris}\circ \rho_{\crispris} $.
	\end{theorem}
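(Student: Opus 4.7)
The plan is to fit $\rho_{\crispris}$ and $\rho_{\BKcrispris}$ into a single commutative diagram whose right column is the identification $\epsilon_{\crispris}$ of Theorem \ref{Thm:ExtdRepthm}, using Lemma \ref{Lem:  PrisPerMapDiagm} as the essential input, which already supplies the mod-$t$ compatibility between $\xi^{\natural}_{\BKcrispris}$ and $\xi^{\natural}_{\crispris}$. Since $\epsilon_{\crispris}$ is an isomorphism of $W_{\crispris}$-stacks, the stated equality $\rho_{\crispris}=\epsilon_{\crispris}\circ \rho_{\BKcrispris}$ is tantamount to the commutativity of the triangle in Theorem \ref{Thm:MainRedThm}(3), read in either direction through $\epsilon_{\crispris}$.

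I would begin with a pointwise computation. Fix $\underline{\bar x}=(\underline{R},\bar x)\in S_{\crispris}$; after passing to a flat cover if necessary, formal smoothness of $\widehat{\mathcal{S}}$ over $\Spf W$ lifts $\bar x$ to a morphism $x\colon \Spf R\to \widehat{\mathcal{S}}$, so that $\underline{x}=(\underline{R},x)\in \mathcal{S}_{\tilde{\prism}_\cris}$ is a prismatic frame of $\underline{\bar x}$ inducing the Breuil--Kisin prism $\underline{\mathfrak{S}_R}$. Choose $\tilde{\beta}\in \mathbb{J}_{\BKcrispris}(\underline{x})$ and let $\beta:=\tilde{\beta}\otimes_{\mathfrak{S}_R, t\mapsto 0} R\in \mathbb{J}_{\crispris}(\underline{\bar x})$ be its mod-$t$ reduction. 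By the crystal property of prismatic cohomology exploited in Lemma \ref{Lem:ModtIdentftion}, one has $(\mathfrak{M}_{\underline{x}},\varphi_{\underline{x}})\otimes_{\mathfrak{S}_R,t\mapsto 0} R\cong (M_{\underline{\bar x}},\varphi_{\underline{\bar x}})$, and since $E\equiv p\pmod{t}$, the identity
\[
\xi^{\natural}_{\BKcrispris}(\tilde{\beta})\;=\;\tilde{\beta}^{-1}\varphi_{\underline{x}}\varphi^{*}(\tilde{\beta})\;\in\;\mathcal{G}(\mathfrak{S}_R)\tilde{\mu}(E)\mathcal{G}(\mathfrak{S}_R)
\]
reduces modulo $t$ to
\[
\beta^{-1}\varphi_{\underline{\bar x}}\varphi^{*}(\beta)\;=\;\xi^{\natural}_{\crispris}(\beta)\;\in\;\mathcal{G}(R)\tilde{\mu}(p)\mathcal{G}(R)\;=\;\mathbf{C}^{\tilde{\mu}}_{\tilde{\prism}_\cris}(\underline{R}).
\]
This is exactly the content of Lemma \ref{Lem:  PrisPerMapDiagm} read at $\underline{x}$ and constitutes the inner square of the desired diagram.

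Next I would propagate this pointwise equality through the successive quotients that define the two $1$-truncations. By construction, the mod-$t$ image of the subgroup $\mathcal{L}^{\dagger}_{\BKcrispris}\mathcal{G}\subseteq \mathcal{L}^+_{\BKcrispris}\mathcal{G}$ is precisely $\mathcal{L}^{1}_{\crispris}\mathcal{G}$, and mod-$t$ takes $\mathbf{C}^{\tilde{\mu}}_{\BKcrispris}$ into $\mathbf{C}^{\tilde{\mu}}_{\tilde{\prism}_\cris}$. Quotienting the BK side by $\mathcal{L}^{\dagger}_{\BKcrispris}\mathcal{G}\times \mathcal{L}^{\dagger}_{\BKcrispris}\mathcal{G}$ and the crispris side by $\mathcal{L}^{1}_{\crispris}\mathcal{G}\times \mathcal{L}^{1}_{\crispris}\mathcal{G}$ therefore yields two maps $\rho^{\natural}_{\BKcrispris}\colon \mathrm{J}_{\crispris}\to (\prescript{}{1}{\mathcal{C}_1^{\mu}})_{\crispris}$ and $\rho^{\natural}_{\crispris}\colon \mathrm{J}_{\crispris}\to (\prescript{}{1}{\mathbf{C}_1^{\tilde{\mu}}})_{\crispris}$ which coincide under the canonical identification of Theorem \ref{Thm:ExtdRepthm} sending $g\tilde{\mu}(p)h\mapsto \bar g\mu(t)\bar h$, i.e.\ under $\epsilon_{\crispris}$. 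Descending this equality along the $\mathcal{G}_{\crispris}$-torsor $\mathrm{J}_{\crispris}\to S_{\crispris}$ (with respect to the $\varphi$-conjugation action) delivers the equality of $W_{\crispris}$-stack maps asserted by the theorem.

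The argument is essentially formal bookkeeping once the pointwise compatibility is in hand, so I do not expect a substantive obstacle. The two steps that merit care are the local existence of the prismatic lift $\underline{x}$, handled by the formal smoothness of $\widehat{\mathcal{S}}$ together with passage to a flat cover where needed, and the surjectivity of $\mathcal{L}^{\dagger}_{\BKcrispris}\mathcal{G}\twoheadrightarrow \mathcal{L}^{1}_{\crispris}\mathcal{G}$ under mod-$t$ reduction, which is immediate from the definition of the dagger kernel as the preimage of $\mathcal{L}^1_{\crispris}\mathcal{G}$. Everything else consists of unwinding the torsor quotients and invoking the crystal property of $\mathrm{H}^{1}_{\prism}(\widehat{\sA}/\widehat{\sS})$.
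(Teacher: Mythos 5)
Your proof is correct and follows essentially the same route as the paper's: after precomposing with the projection $\mathbb{J}_{\BKcrispris}\to \mathrm{J}_{\crispris}$ (equivalently, working with local trivializations over a flat cover), both arguments reduce the claim to the cell-level fact that reducing $\mathbf{C}^{\tilde{\mu}}_{\BKcrispris}$ mod $t$ and then projecting to $(\prescript{}{1}{\mathbf{C}_1^{\tilde{\mu}}})_{\crispris}$ agrees, via $\epsilon_{\crispris}$, with reducing mod $p$ and projecting to $(\prescript{}{1}{\mathcal{C}_1^{\mu}})_{\crispris}$, which holds by the very definition of $\epsilon_{\crispris}$ as the mod-$(t,p)$ identification $g\tilde{\mu}(p)h\mapsto \bar g\mu(t)\bar h$. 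Your pointwise lifting and quotient bookkeeping is simply an explicit unwinding of the paper's ``verify locally in flat topology'' step.
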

	\begin{proof}
		The assertion can be verified locally. Hence, it suffices to check that the compositions of $\mathbb{J}_{\BKcrispris}\to \mathrm{J}_{\crispris}$ with $\rho_{\crispris}$ and with $\epsilon_{\crispris}\circ \rho_{\BKcrispris}$ are equal.
		Thus, we are reduced to verifying that the diagram below between maps of   $W_{\crispris}$-$\mathsf{spaces}$ is commutative: this is clear by definition of $\epsilon_{\crispris}$. 
		\[
		\xymatrix{
			\mathbf{C}^{\tilde{\mu}}_{\BKcrispris}\ar[d]^{\text{mod } t}\ar[r]^{\text{mod } p} & \mathcal{C}^{\mu}_{\crispris}\ar[r]^{\Pr} & (\prescript{}{1}{\mathcal{C}^{\mu}_{1}})_{\crispris}\ar[d]^{\epsilon^{-1}_{\crispris}} \\
			\mathbf{C}^{\tilde{\mu}}_{\tilde{\prism}_\cris}\ar[rr]^{\Pr} & & (\prescript{}{1}{ \mathbf{C}_{1}^{\tilde{\mu}}})_{\crispris}.
		}
		\]
	\end{proof}
	
	\subsection{Recovering the zip period map $\zeta$}\label{S:RecoverZipPerMap}
	% Set $W_{\perf} = \kappa_{\perf}$ to be the sub-site of $ W_{\crispris} = \kappa_{\crispris}$ consisting of perfect objects (i.e., those $(R, \varphi)$ with $\varphi$ being a bijection). As a convention, for each $\Space$ $X$ over $W_{\crispris}$, we write $X^{\perf}$ \footnote{The notational inconsistency arises as we are adhering to a broadly accepted notation convention for perfection (for instance, a perfection of scheme $X$ is denoted $X^{\perf}$); however, we believe this does not affect clarity.} to denote its restriction to $\kappa_{\perf}$.
	
	% We write $\epsilon^{\perf}: (\prescript{}{1}{\mathbf{C}_1^{\tilde{\mu}}})^{\perf} \cong  (\prescript{}{1}{\mathcal{C}_1^{\mu}})^{\perf}$ for the map induced from  \eqref{Eq:2CC1Isom}, and 
	
	% \[
	% \rho_{\cris}^{\perf}: S^{\perf} \longrightarrow [\prescript{}{1}{ \mathbf{C}_{1}^{\tilde{\mu}}}/\mathrm{Ad}_{\varphi}G]^{\perf},  \quad \rho^{\perf}_{\BK}: S^{\perf} \longrightarrow [\prescript{}{1}{ \mathcal{C}_{1}^{\mu}}/\mathrm{Ad}_{\varphi}G]^{\perf}
	% \]
	
	% for the maps induced from the two period maps $\rho_{\crispris}$ and $\rho_{\BKcrispris}$ established above in this subsection. We then have
	% \(
	%  \rho^{\perf}_{\BK}= \epsilon^{\perf} \circ \rho_{\cris}^{\perf}.
	% \)

	As recalled in the introduction, there is a canonical morphism of $\kappa$-stacks, \(\delta: [\prescript{}{1}{ \mathcal{C}_{1}^{\mu}}/\mathrm{Ad}_{\varphi}G]\to G\text{-}\mathsf{Zip}^{\mu},\) which becomes an isomorphism after taking perfection \cite[Corollary 6.7]{Yan23zip}.
	\begin{proposition}
		The composition \( \delta_{\crispris} \circ \rho_{\crispris}: S_{\crispris} \to (G\text{-}\textsf{Zip}^{\mu})_{\crispris} \) is of reduction along \( W_{\crispris} \to \kappa_{\cris} \) by \( \zeta_{\cris} \).
	\end{proposition}
	\begin{proof}
		It is enough to show that the morphisms $\delta_{\crispris}\circ \rho_{\crispris}$ and $\zeta_{\crispris}$ are 2-isomorphic. 
		This can be checked locally at crystalline points of $S$; for example, one can use the explicit local construction of $\zeta$ in \cite{Yan18} or the map $\eta^{\sharp}$ in \S \ref{S:PrisGauge} below. 
	\end{proof}
	\begin{remark}\label{Rmk:PrisPerpvsZipMap}
		The proposition implies that the Frobenius period map  $\rho_{\BKcrispris}$ determines the zip period map $\zeta$ \cite{ChaoZhangEOStratification} since $S_{\cris}$ contains sufficiently many crystalline points ($S$ is smooth over $\kappa$).
	\end{remark}

	\section{Zip gauges over $(\mathcal{S}, S)$}\label{S:ZipGauges}
	
	\subsection{The Universal Zip Torsor over $S$}\label{S:UnivZip}
	Consider the following filtered $O_S$-modules,
	\[
	(\mathcal{W}_0 \subseteq \mathcal{W}) := \left( p \mathrm{H}^1_{\mathrm{dR}}(\widehat{\sA}/\widehat{\sS}) \subseteq \Fil \mathrm{H}^1_{\mathrm{dR}}(\widehat{\sA}/\widehat{\sS}) \right) \otimes_{{\mathcal{O}}_{\widehat{\mathcal S}}} {\mathcal{O}}_S.
	\]
	\begin{lemma}\label{Lem:LocSummand}
		The submodule $\mathcal{W}_0\subseteq \mathcal{W}$ is a locally direct summand of $\mathcal{O}_S$-modules.
	\end{lemma}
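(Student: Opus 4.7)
The plan is to reduce the lemma to a short exact sequence of $\mathcal{O}_S$-modules whose quotient is locally free, so that the sequence splits automatically Zariski locally.

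First I would unwind what $\Fil M$ is, where $M := \mathrm{H}^1_{\mathrm{dR}}(\widehat{\sA}/\widehat{\sS})$. By the definition recalled just before Lemma~\ref{Lem: CrisTens}, $\Fil M$ is the preimage under the canonical reduction $M \twoheadrightarrow M/pM = \mathrm{H}^1_{\mathrm{dR}}(\sA/S) =: \mathcal{V}$ of the Hodge filtration $\mathcal{V}^1 = \omega_{\sA/S} \subseteq \mathcal{V}$. In particular the inclusion $pM \subseteq \Fil M$ holds automatically, and the map $\Fil M / pM \to \mathcal{V}$ identifies $\Fil M / pM$ with $\mathcal{V}^1$. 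Since $\mathcal{V}^1$ is a locally direct summand of $\mathcal{V}$ (the Hodge filtration is locally split on $S$), it is a locally free $\mathcal{O}_S$-module of rank $\mathrm{g}$.

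Next I would consider the short exact sequence of $\mathcal{O}_{\widehat{\sS}}$-modules
$$0 \longrightarrow pM \longrightarrow \Fil M \longrightarrow \mathcal{V}^1 \longrightarrow 0$$
and apply $(-)\otimes_{\mathcal{O}_{\widehat{\sS}}} \mathcal{O}_S$. Right-exactness, together with the identification $\mathcal{V}^1 \otimes_{\mathcal{O}_{\widehat{\sS}}} \mathcal{O}_S = \mathcal{V}^1$ (since $\mathcal{V}^1$ is already an $\mathcal{O}_S$-module), gives
$$pM \otimes_{\mathcal{O}_{\widehat{\sS}}} \mathcal{O}_S \longrightarrow \mathcal{W} \longrightarrow \mathcal{V}^1 \longrightarrow 0.$$
Reading $\mathcal{W}_0$ as the image of the first arrow (the natural interpretation of the notation in the definition of $(\mathcal{W}_0 \subseteq \mathcal{W})$), this produces a short exact sequence of $\mathcal{O}_S$-modules
$$0 \longrightarrow \mathcal{W}_0 \longrightarrow \mathcal{W} \longrightarrow \mathcal{V}^1 \longrightarrow 0.$$

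Finally, since $\mathcal{V}^1$ is locally free, hence Zariski locally projective over $\mathcal{O}_S$, this sequence splits locally on $S$, and so $\mathcal{W}_0$ is a locally direct summand of $\mathcal{W}$, as claimed. I do not anticipate a genuine obstacle: the argument is essentially formal once one records that $M/\Fil M$ is an $\mathcal{O}_S$-module (which forces $pM \subseteq \Fil M$) and that $\Fil M / pM$ is canonically the locally free sheaf $\mathcal{V}^1$. The only mild subtlety is conceptual: $pM \otimes_{\mathcal{O}_{\widehat{\sS}}} \mathcal{O}_S$ need not inject into $\mathcal{W}$, so one must be careful to interpret $\mathcal{W}_0$ as the image of this map and not as the abstract tensor product; this, however, is exactly what the notation $(\mathcal{W}_0 \subseteq \mathcal{W})$ suggests.
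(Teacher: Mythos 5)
Your proof is correct, and it takes a genuinely different route from the paper's. The paper verifies the claim Zariski-locally at crystalline points $(\underline{R},\bar x) \in S_{\crispris}$: after a $p$-adic \'etale extension making $\mathbb{I}_{\cris}(R)$ nonempty, it chooses a trivialization $\beta$ so that the cocharacter $\tilde\mu$ yields a decomposition $M = M^1 \oplus M^0$ with $\Fil M = M^1 \oplus pM^0$, from which $\mathcal{W}_0 = pM^0/p^2M^0$ is manifestly a direct summand of $\mathcal{W} = M^1/pM^1 \oplus pM^0/p^2M^0$. You instead avoid both the crystalline-point reduction and the choice of trivialization: you observe that $\Fil M / pM \cong \mathcal{V}^1 = \omega_{\sA/S}$ canonically and is locally free, tensor the exact sequence $0 \to pM \to \Fil M \to \mathcal{V}^1 \to 0$ with $\mathcal{O}_S$, and invoke local projectivity of $\mathcal{V}^1$ to split the resulting sequence $0 \to \mathcal{W}_0 \to \mathcal{W} \to \mathcal{V}^1 \to 0$. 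Your approach is more elementary and works directly on $S$ without needing the torsor $\mathbb{I}_{\cris}$ or the minuscule cocharacter, whereas the paper's explicit decomposition $\Fil M = M^1 \oplus pM^0$ is used again in the subsequent lemmas (e.g.\ Lemma~\ref{Lem:PminusTors} and the universal zip torsor construction), so the crystalline-point computation is reused rather than being strictly necessary here. One small remark: you correctly flag that $\mathcal{W}_0$ must be read as the image of $pM \otimes_{\mathcal{O}_{\widehat{\sS}}} \mathcal{O}_S$ in $\mathcal{W}$, not the abstract tensor product — this is indeed the intended reading and is the key to making the tensored sequence short exact on the left.
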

	\begin{proof}
		This can be checked locally at crystalline points of $S$ (a family of crystalline points forming a Zariski covering of $S$ is sufficient). Take a point $\underline{\bar{x}}\in S_{\crispris}$ and use notations in Lemma \ref{Lem: CrisTens}. 	Write $i: \Fil M \to M$ for the canonical inclusion map and $\bar{i}: \overline{\Fil M} \to \bar{M}$
		its reduction modulo $p$, i.e., $i\otimes_R\mathrm{id}_{\bar{R}}$. Similarly, we write $\overline{\cdot p}: \bar{M} \to  \overline {\Fil M} $ for the reduction mod $p$ of the canonical multiplication by $p$ map $M\xrightarrow {\cdot p} \Fil M$. Write $(\overline{\Fil M})_0:=\mathrm{Im}(\bar{i})$. Let $M=M^1\oplus M^0$ and $\Fil M= M^1\oplus p M^0$ be the decomposition as in Lemma \ref{Lem: CrisTens}, then clearly we have 
		\[
		(\overline{\Fil M})_0= p M^0/p^2 M^0\subseteq M^1/p M^1\oplus p M^0/ p^2 M^0.
		\]
		Thus, $ (\overline{\Fil M})_0$ is a direct summand of $\overline{\Fil M}$. 
	\end{proof}
	
	Now we can define the following subscheme of the $G$-torsor $\mathrm{J}$ over $S$,
	\[
	\mathrm{J}_- := \mathrm{Isom}\big( (( \Lambda^{*,0}_0 \subseteq \Lambda_0^* ) \otimes_{\Zp} \mathcal{O}_S, \mathrm{T}_0 \otimes 1), \, (\mathcal{W}_0 \subseteq \mathcal{W}, \mathrm{T}_{\mathrm{\cris}}) \big).
	\]
	\begin{lemma}\label{Lem:PminusTors}
		The scheme $\mathrm{J}_-$ is a $P_-$-torsor over $S$.
	\end{lemma}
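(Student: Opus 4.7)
The strategy is to realize $\mathrm{J}_-$ as a reduction of structure group of the $G$-torsor $\mathrm{J} = \widehat{\mathbb{J}}_\kappa$ from $G$ to $P_-$, paralleling the argument for the $P_+$-torsor $\mathrm{I}_+$ referenced in \cite[Theorem 3.4.1]{ChaoZhangEOStratification}. First, by the construction of $P_-$ from $\tilde{\mu}$, the subgroup $P_- \subseteq G$ is the stabilizer of the submodule $\Lambda_0^{*,0} \subseteq \Lambda_0^*$. Consequently the right $G$-action on $\mathrm{J}$ restricts to a free $P_-$-action on $\mathrm{J}_-$, and any two local sections $\alpha, \alpha'$ of $\mathrm{J}_- \to S$ over some étale $U \to S$ differ by a unique $g \in G(U)$ which must satisfy $g(\Lambda_0^{*,0} \otimes \mathcal{O}_U) = \Lambda_0^{*,0} \otimes \mathcal{O}_U$ (both $\alpha,\alpha'$ send this submodule to $\mathcal{W}_0|_U$); hence $g \in P_-(U)$. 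Thus $P_-$ acts simply transitively on the fibres of $\mathrm{J}_- \to S$.

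It remains to produce local sections in the $p$-adic étale topology on $S$. The plan is to work over small affines: cover $S$ by open affines $\operatorname{Spec} \bar R$ such that $\bar R$ admits a crystalline frame $(R, \varphi_R)$, and, after a further étale localization, assume that the $\mathcal{G}$-torsor $\mathbb{I}_{\cris}$ on $\widehat{\mathcal{S}}$ admits a section $\beta \in \mathbb{I}_{\cris}(R)$ (possible by the proof of Lemma~\ref{Lem:petaleTorsor}). The section $\beta$ is an isomorphism $(\Lambda_0^* \otimes_{\Zp} R,\, \mathbb{T}_0 \otimes 1) \cong (M,\, \mathbb{T}_{\cris})$, where $M := \mathrm{H}^1_{\cris}(\sA/R)$, and carries the weight decomposition $\Lambda_0^* = \Lambda_0^{*,1} \oplus \Lambda_0^{*,0}$ onto the weight decomposition $M = M^1 \oplus M^0$ with respect to $\tilde{\mu}$.

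Following the proof of Lemma~\ref{Lem:petaleTorsor}, the composition $\tilde{\alpha} := \tilde{\mu}(p) \circ \beta$ is an isomorphism $(\Lambda_0^* \otimes_{\Zp} R,\, \mathbb{T}_0 \otimes 1) \cong (\Fil M,\, \mathbb{T}_{\cris})$, hence defines an element $\tilde{\alpha} \in \widehat{\mathbb{J}}(R)$. Let $\alpha := \tilde{\alpha} \otimes_R \bar R \in \mathrm{J}(\bar R)$. Since $\tilde{\mu}(p)$ acts as $p$ on $M^0$ and as $1$ on $M^1$ (the content of the identity $\tilde{\mu}(p) \cdot M = \Fil M$ in Lemma~\ref{Lem: CrisTens}), one has $\tilde{\alpha}(\Lambda_0^{*,0} \otimes_{\Zp} R) = p M^0 \subseteq pM$. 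Reducing modulo $p$ and using the identification $\mathcal{W}_0|_{\bar R} = pM^0/p^2 M^0 \subseteq \overline{\Fil M}$ established in the preceding lemma, $\alpha$ restricts to an isomorphism $\Lambda_0^{*,0} \otimes_{\Zp} \bar R \cong \mathcal{W}_0|_{\bar R}$. Hence $\alpha \in \mathrm{J}_-(\bar R)$, which together with the first paragraph proves that $\mathrm{J}_-$ is a $P_-$-torsor.

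The main subtlety is the last verification: correctly tracking the weight conventions of $\tilde{\mu}$ so that the image of $\Lambda_0^{*,0}$ under the $p$-adic twist by $\tilde{\mu}(p)$, after reduction modulo $p$, lands precisely in the distinguished direct summand identified with $\mathcal{W}_0|_{\bar R}$ in the previous lemma. Once this alignment is fixed, the remaining facts — free action of $P_-$, local non-emptiness, and simple transitivity on fibres — combine routinely to give the $P_-$-torsor structure.
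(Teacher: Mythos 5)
Your proposal is correct and takes essentially the same approach as the paper: the paper's proof is a one-line pointer ("verify locally at crystalline points using the decomposition $\Fil M = M^1 \oplus pM^0$ as in Lemma~\ref{Lem: CrisTens}"), and your argument is precisely a careful unwinding of that hint — working at a crystalline frame $(R,\varphi_R)$, trivializing $\mathbb{I}_{\cris}$ to obtain $\beta$, forming $\tilde{\alpha} = \tilde{\mu}(p)\circ\beta$ as in Lemma~\ref{Lem:petaleTorsor}, and tracking that its mod~$p$ reduction carries $\Lambda_0^{*,0}\otimes\bar R$ onto $pM^0/p^2M^0 = \mathcal{W}_0|_{\bar R}$, together with the standard stabilizer argument for simple transitivity.
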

	\begin{proof}
		This can be verified locally at crystalline points using the decomposition $\Fil M = M^1 \oplus p M^0$ as in Lemma \ref{Lem: CrisTens}. To be precise, via the trivialization $\beta$ there, $\tilde{\mu}$ induces the weight decomposition $M=M^1\oplus M^0$, and the following isomorphism between filtered $R$-modules,
		\[
		(\Lambda_0^{*, 0}\subseteq \Lambda_0^*)\otimes_{\Zp}R\xrightarrow[\cong]{\beta}(M^0\subseteq M)\xrightarrow[\cong]{\beta \tilde{\mu}(p)\beta^{-1}} (pM^0\subseteq M^0),
		\]
		whose reduction mod $p$ provides a section of $\mathrm{J}_-(\bar{R})$ (cf. the proof of \ref{Lem:LocSummand}). 
	\end{proof}
	
	Set
	\[
	\mathcal{V} := \mathrm{H}^1_{\cris}(\widehat{\sA}/\widehat{\sS}, \mathrm{T}_{\cris}) \otimes_{{\mathcal{O}}_{\widehat{\mathcal S}}} {\mathcal{O}}_S = \mathrm{H}^1_{\mathrm{dR}}(\widehat{\sA}/\widehat{\sS}, \mathrm{T}_{\mathrm{dR}}) \otimes_{{\mathcal{O}}_{\widehat{\mathcal S}}} {\mathcal{O}}_S
	\]
	and set $\mathcal{V}^1 := \omega_{\mathcal{A}/S} \subseteq \mathcal{V}$ for its Hodge filtration. Then we can form the following subscheme of the $G$-torsor $\mathrm{I}_{\mathrm{dR}} := \mathbb{I}_{\mathrm{dR}} \otimes_{{\mathcal{O}}_{\widehat{\mathcal S}}} {\mathcal{O}}_S$ over $S$,
	\[
	\mathrm{I}_+ := \mathrm{Isom}\big( (\Lambda_0^* \supseteq \Lambda^{*,1}_0) \otimes_{\Zp} \mathcal{O}_S, \mathrm{T}_0 \otimes 1), \, (\mathcal{V} \supseteq \mathcal{V}^1, \mathrm{T}_{\mathrm{dR}}) \big), 
	\]
	where $\Lambda^{*, 1}$ denotes the weight 1 direct summmad in the weight decomposition $\Lambda^*=\Lambda^{*, 1}\oplus \Lambda^{*, 0}$ induced by $\mu: \mathbb{G}_{m, \kappa}\to G\subseteq \mathrm{GL}_{\kappa}(\Lambda^*_0)$. 
	It is a $P_+$-torsor over $S$; see, for example, \cite[Theorem 3.4.1]{ChaoZhangEOStratification}. 
	
	\begin{lemma}[Universal Zip Torsor over $S$]
		The inclusion $\Fil \mathrm{H}^1_{\mathrm{dR}}(\widehat{\sA}/\widehat{\sS}) \subseteq \mathrm{H}^1_{\mathrm{dR}}(\widehat{\sA}/\widehat{\sS})$ induces an isomorphism of $\mathcal{O}_S$-modules,
		\(
		\mathcal{W}/\mathcal{W}_0 \cong \mathcal{V}^1;
		\)
		the multiplication by $p$ map
		\(
		\mathrm{H}^1_{\mathrm{dR}}(\widehat{\sA}/\widehat{\sS}) \xrightarrow{\cdot p} \Fil \mathrm{H}^1_{\mathrm{dR}}(\widehat{\sA}/\widehat{\sS})
		\)
		induces an isomorphism \(
		\mathcal{V}/\mathcal{V}^1 \cong \mathcal{W}_0
		\) of $\mathcal{O}_S$-modules. In particular, we obtain an isomorphism \[
		\mathrm{J}_-/U_- \cong \mathrm{I}_+/U_+,
		\] of $P_+/U_+=M= P_-/U_-$-torsors,
		and hence an $\mathsf{E}_{\mu}$-torsor $\mathsf{Z}_\mu$ over $S$, defined as a subscheme of $\mathrm{J}_-\times_S \mathrm{I}_+$, consisting of pairs of elements from \( \mathrm{J}_- \) and \( \mathrm{I}_+ \) that map to the same point in the common quotient \( \mathrm{J}_-/U_- \cong \mathrm{I}_+/U_+ \).
	\end{lemma}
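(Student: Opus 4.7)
The plan is to establish the two module isomorphisms first, then derive the torsor isomorphism from them, and finally form the $\mathsf{E}_{\mu}$-torsor $\mathsf{Z}_{\mu}$ as a fiber product. All three steps are essentially tautological once compatibility with the crystalline tensors is verified.

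For the module isomorphisms I would argue locally on $S$ in the spirit of the proofs of Lemma~\ref{Lem: CrisTens} and Lemma~\ref{Lem:petaleTorsor}. Choose a $p$-adic étale cover $\Spf R \to \widehat{\sS}$ over which $\mathbb{I}_{\cris}$ admits a trivialization $\beta \in \mathbb{I}_{\cris}(R)$, and write $M = M^1 \oplus M^0$ for the $\tilde{\mu}$-weight decomposition, so $\Fil M = M^1 \oplus p M^0$ and $pM \subseteq \Fil M$. A direct calculation then shows: the reduction modulo $p$ of the inclusion $\Fil M \hookrightarrow M$ has image $M^1/pM^1 = \mathcal{V}^1$ and kernel $pM^0/p^2 M^0 = \mathcal{W}_0$, giving $\mathcal{W}/\mathcal{W}_0 \cong \mathcal{V}^1$; and the reduction modulo $p$ of multiplication-by-$p$, $M \xrightarrow{\cdot p} \Fil M$, has kernel $\Fil M/pM = \mathcal{V}^1$ and image $pM/p\Fil M = \mathcal{W}_0$, giving $\mathcal{V}/\mathcal{V}^1 \cong \mathcal{W}_0$. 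Since the underlying morphisms $\mathcal{W} \to \mathcal{V}$ and $\mathcal{V} \to \mathcal{W}$ are defined globally and intrinsically, independently of $\beta$, the induced isomorphisms are canonical.

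Next I would interpret the quotients of the torsors as $M$-torsors of graded, tensor-preserving isomorphisms. By the standard description of parabolic reductions, $\mathrm{I}_+/U_+$ parametrizes pairs of tensor-compatible isomorphisms $\Lambda_0^{*,1} \otimes \mathcal{O}_S \cong \mathcal{V}^1$ and $\Lambda_0^{*,0} \otimes \mathcal{O}_S \cong \mathcal{V}/\mathcal{V}^1$, while $\mathrm{J}_-/U_-$ parametrizes pairs $\Lambda_0^{*,0} \otimes \mathcal{O}_S \cong \mathcal{W}_0$ and $\Lambda_0^{*,1} \otimes \mathcal{O}_S \cong \mathcal{W}/\mathcal{W}_0$. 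Composing with the two module isomorphisms from the previous step yields the desired isomorphism $\mathrm{J}_-/U_- \cong \mathrm{I}_+/U_+$ as $M$-torsors, where the common Levi is identified via $P_-/U_- \cong M \cong P_+/U_+$. With $\mathrm{I}_+$ a $P_+$-torsor, $\mathrm{J}_-$ a $P_-$-torsor by Lemma~\ref{Lem:PminusTors}, and their Levi quotients matched, the pullback diagram immediately produces an $\mathsf{E}_{\mu}$-torsor $\mathsf{Z}_{\mu}$, because $\mathsf{E}_{\mu} \subseteq P_- \times P_+$ was defined precisely as the subgroup of pairs sharing a common Levi component.

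The main obstacle is the tensor bookkeeping. One must verify that $\mathbb{T}_{\cris}$ restricts to a set of tensors on $\mathcal{W}$—so that $\mathrm{J}_-$ is sensibly defined and is actually a $P_-$-torsor—and that the two module isomorphisms from the first step identify graded tensor structures compatibly on both factors, so that the torsor isomorphism is honestly $M$-equivariant. The first point follows from Lemma~\ref{Lem: CrisTens}, which places $\mathbb{T}_{\cris}$ inside $(\Fil \mathrm{H}^1_{\cris}(\widehat{\sA}/\widehat{\sS}))^{\otimes}$. The second follows from the $\tilde{\mu}(p)$-equivariance argument already used in the proof of Lemma~\ref{Lem:petaleTorsor}: the rational isomorphism $\tilde{\mu}(p): (M, \mathbb{T}_{\cris}) \cong (\Fil M, \mathbb{T}_{\cris})$ interchanges the two weight pieces and therefore matches the weight-$1$ tensors in $\mathcal{V}^1$ with the weight-$0$ tensors in $\mathcal{W}/\mathcal{W}_0$, and symmetrically on the other factor, as required.
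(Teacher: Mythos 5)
Your proof takes essentially the same route as the paper: both argue locally via the decomposition $\Fil M = M^1 \oplus pM^0$ from Lemma~\ref{Lem: CrisTens}, reduce the sequence $\Fil M \xrightarrow{i} M \xrightarrow{\cdot p} \Fil M \xrightarrow{i} M$ modulo $p$, and read off the two module isomorphisms from its exactness — from which the Levi-quotient torsor isomorphism and the $\mathsf{E}_{\mu}$-torsor $\mathsf{Z}_{\mu}$ follow; your writeup is merely more explicit about the kernel/image computations and the parabolic-reduction interpretation of $\mathrm{I}_+/U_+$ and $\mathrm{J}_-/U_-$, steps the paper compresses into ``from which the lemma follows.'' One small slip in your final paragraph: $\tilde{\mu}(p)$ does not \emph{interchange} the two weight pieces but rescales them, and both identifications $\mathcal{W}/\mathcal{W}_0 \cong \mathcal{V}^1$ and $\mathcal{W}_0 \cong \mathcal{V}/\mathcal{V}^1$ respect the weight grading (weight~$1$ to weight~$1$, weight~$0$ to weight~$0$, since $\mathcal{W}/\mathcal{W}_0 = M^1/pM^1$ is itself the weight-$1$ piece); as the underlying maps $i$, $\cdot p$ and $\tilde{\mu}(p)$ all preserve tensors this has no effect on the conclusion, but the phrase ``matches the weight-$1$ tensors in $\mathcal{V}^1$ with the weight-$0$ tensors in $\mathcal{W}/\mathcal{W}_0$'' should be corrected.
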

	\begin{proof}
		Again, this can be verified locally by working with crystalline points and the decomposition $\Fil M = M^1 \oplus p M^0$ from Lemma \ref{Lem: CrisTens}. One verifies that the following sequence of $\mathcal{O}_{\widehat{S}}$-modules, 
		\[
		\Fil \mathrm{H}^1_{\mathrm{dR}}(\widehat{\sA}/\widehat{\sS}) \xrightarrow{i} \mathrm{H}^1_{\mathrm{dR}}(\widehat{\sA}/\widehat{\sS}) \xrightarrow{\cdot p} \Fil \mathrm{H}^1_{\mathrm{dR}}(\widehat{\sA}/\widehat{\sS}) \xrightarrow{i} \mathrm{H}^1_{\mathrm{dR}}(\widehat{\sA}/\widehat{\sS})
		\]
		is exact, from which the lemma follows.
	\end{proof}
	We shall refer to the $\mathsf{E}_{\mu}$-torsor $\mathsf{Z}_{\mu}$ over $S$ as the \emph{universal zip torsor} over $S$. 
	
	\subsection{A Zip-style Description of $\rho_{\prism_\cris^\BK}$}\label{S:PrisGauge}
	Recall that in \cite[Theorem B]{Yan23zip}, we establish a canonical isomorphism of $\kappa$-stacks, 
	\[
	[\prescript{}{1}{\mathcal{C}_{1}^{\mu}}/\mathrm{Ad}_{\varphi} G] \xrightarrow{\epsilon'} [G/\mathsf{R}_{\varphi} \mathsf{E}_{\mu}],
	\]
	where the action of $\mathsf{E}_{\mu}$ on $G$ is given by $g \cdot (p_-, p_+) = p_+^{-1} g \varphi(p_-)$. We now describe the composition 
	\[
	\eta_{\crispris} := \epsilon'_{\crispris}\circ \rho_{\BKcrispris}: S_{\crispris} \longrightarrow [G/\mathsf{R}_{\varphi} \mathsf{E}_{\mu}]_{\crispris},
	\]
	using the universal zip torsor $\mathsf{Z}_{\mu}$ of $S$. Note that defining $\eta_{\crispris}$ is equivalent to providing a $G_{\crispris}$-torsor over $S$, which is naturally given by $(\mathsf{Z}_{\mu})_{\crispris}$, together with an $(\mathsf{E}_{\mu})_{\crispris}$-equivariant map $\eta^{\sharp}:(\mathsf{Z}_{\mu})_{\crispris} \to G_{\crispris}$, which we now describe.
	
	Given a point $\underline{\bar{x}} = (\underline{R}, \bar{x}) \in S_{\crispris}$, the zip isomorphism $\gamma_{\underline{\bar{x}}}$ in \eqref{Eq:zipisom} induces an isomorphism of $\bar{R}$-schemes:
	\[
	\Gamma_{\underline{\bar{x}}}: \mathrm{J}_{\bar{x}}^\varphi \cong \mathrm{I}_{\bar{x}}, \quad \text{with} \quad 
	\mathrm{J}^{\varphi}_{\bar{x}} := \mathrm{Isom}\big( (\Lambda^*_0 \otimes_{\Zp} \bar{R}, \mathrm{T}_0 \otimes 1), \, (\mathcal{W}, \mathrm{T}_{\cris})_{\bar{x}} \otimes_{\bar{R}, \varphi} \bar{R} \big)
	\]
	being the Frobenius twist by $\varphi: \bar{R} \to \bar{R}$ of $\mathrm{J}_{\bar{x}}$. The map $\eta^{\sharp}$ sends an element $(\alpha, \beta)\in \mathrm{J}_-(\bar{R})\times \mathrm{I}_+(\bar{R})$ of $\mathsf{Z}_{\mu}(\bar{R}) $ to \(\beta^{-1} \circ \Gamma_{\underline{\bar{x}}}(\varphi(\alpha)) \in G(\bar{R})\), which is easily verified to be $(\mathsf{E}_{\mu})_{\crispris}$-equivariant. Note that here $\mathrm{J}^{\varphi}$ denotes the pullback of $\mathrm{J}$ along the \emph{absolute Frobenius} map $\varphi: S\to S$. The same principle applies for torsors over any base $\kappa$-scheme.
	
	\begin{remark}
		Although not exactly needed here, note that for a $\kappa$-scheme $T$, the groupoid
		\(
		\left[\prescript{}{1}{\mathcal{C}_{1}^{\mu}}/\mathrm{Ad}_{\varphi} G\right](T)
		\)
		is equivalent to the category whose objects are tuples
		\[
		\Bigl( \mathrm{J}_-,\, \mathrm{I}_+,\, \Gamma\colon (\mathrm{J}_-\times^{P_-}G)^{\varphi}\overset{\sim}{\to}\mathrm{I}_+\times^{P_+}G,\, \epsilon\colon \mathrm{J}_-/U_-\overset{\sim}{\to}\mathrm{I}_+/U_+\Bigr),
		\]
		where $\mathrm{J}_-$ is a $P_-$-torsor, $\mathrm{I}_+$ is a $P_+$-torsor, $\Gamma$ is an isomorphism of $G^{\varphi}=G$-torsors, and $\epsilon$ is an isomorphism of $M$-torsors. Morphisms are torsor maps compatible with additional (i.e., $\Gamma$ and $\epsilon$) structures.
	\end{remark}
	
	\subsection{A double-zip over $(\mathcal{S}, S)$}\label{S:doublezip}
	We refer to the pair $(\mathsf{Z}_\mu, \rho_{\crispris})$ as the \emph{prismatic zip gauge}\footnote{We adopt this terminology, inspired by X.~Shen's talk on his work \cite{ShenGauge}.}  associated with $(\mathcal{S}, S)$, which exists in the prismatic topology. And we call the data 
	\(
	(\mathsf{Z}_{\mu}, \ \zeta: S \to G\text{-}\mathsf{Zip}^{\mu}),
	\)
	equivalently, the tuple, 
	\begin{equation}\label{DoubleZipData}
		\big(\mathrm{J}_-\subseteq \mathrm{J}, \, \mathrm{I}_+\subseteq \mathrm{I}\supseteq \mathrm{I}_-, \, \mathrm{J}_-/U_- \cong \mathrm{I}_+/U_+,\, (\mathrm{I}_+/U_+)^{\varphi} \cong \mathrm{I}_-/U_-^{\varphi}\big),
	\end{equation}  
	a  \emph{double $G$-zip}, 
	where the data 
	\(
	\big(\mathrm{I}_+\subseteq \mathrm{I}\supseteq \mathrm{I}_-, (\mathrm{I}_+/U_+)^{\varphi} \cong \mathrm{I}_-/U_-^{\varphi}\big)
	\)
	is the universal $G$-zip \cite[Theorem 3.4.1]{ChaoZhangEOStratification} corresponding to the zip period map $\zeta$. Clearly, this double $G$-zip exists in the Zariski topology. As mentioned in Remark \ref{Rmk:PrisPerpvsZipMap}, it is essentially determined by the prismatic zip gauge $ (\mathsf{Z}_\mu, \rho_{\crispris})$. 
	
	However, the relationship between our zip gauge and Drinfeld's version of $G$-zip \cite[Definition 4.2.1]{Drinfeld2023shimurian} where he equips a connection on the $P_-^\sigma$-torsor $I_-$ satisfying the Katz condition, as well as the de Rham $F$-gauge with $G$-structure \cite[Definition 3.9]{ShenGauge} remains an interesting question. Additionally, we consider it interesting to study the moduli space of double-zips in \eqref{DoubleZipData}. 
	
	\begin{remark}\label{Rmk:Conn}
	From the double zip in \eqref{DoubleZipData}, we obtain the Frobenius descent result,
		\[
		(\mathrm{J}_-/U_-)^{\varphi} \cong \mathrm{I}_-/U_-^{\varphi}, \eqno{(*)}
		\]
		which corresponds to the Frobenius descent of the attached graded modules of the conjugate filtration defining the torsor $\mathrm{I}_-$ (cf. \cite[Theorem 3.4.1.(iii)]{ChaoZhangEOStratification}).
	\end{remark}

\end{document}